\documentclass[reqno]{amsart}

\usepackage{amssymb,latexsym,amsmath,amsthm,amsbsy}
\usepackage[mathscr]{eucal}
\usepackage{mathrsfs}
\usepackage{mathtools}
\usepackage{array,booktabs,tabularx}
\usepackage{enumitem,cite}
\usepackage{microtype}
\usepackage{xurl}
\usepackage[hidelinks]{hyperref}

\makeatletter
\@namedef{subjclassname@2020}{\textup{2020} Mathematics Subject Classification}
\makeatother

\theoremstyle{plain}
\newtheorem{thm}{Theorem}[section]
\newtheorem{lem}[thm]{Lemma}
\newtheorem{cor}[thm]{Corollary}
\newtheorem{pro}[thm]{Proposition}
\newtheorem{example}[thm]{Example}
\theoremstyle{definition}
\newtheorem{defn}[thm]{Definition}
\newtheorem{remark}[thm]{Remark}

\setlist[enumerate]{label=\textup{(\roman*)},leftmargin=*,itemsep=2pt,topsep=4pt}
\allowdisplaybreaks[1]

\newcommand{\N}{\mathbb N_0}
\newcommand{\R}{\mathbb R}
\newcommand{\Q}{\mathbb Q}
\newcommand{\Z}{\mathbb Z}
\newcommand{\Smax}{S}

\newcommand{\Fvar}{\mathcal F}
\newcommand{\Uvar}{\mathcal U}

\newcommand{\V}{\mathsf{V}}
\newcommand{\Id}{\operatorname{Id}}
\newcommand{\Mat}{\mathbf{M}}
\newcommand{\UT}{\mathbf{UT}}
\newcommand{\sgn}{\operatorname{sgn}}

\newcommand{\Syn}{\mathfrak S}
\newcommand{\Alg}{\mathfrak A}
\newcommand{\Pal}{\mathcal P}
\newcommand{\eps}{\varepsilon}

\newcommand{\Alphabet}{\mathcal A}
\newcommand{\PP}{\mathbb P_{\mathrm{odd}}}

\newcommand{\SR}{S_{\mathbb R_{\geq0}}}
\newcommand{\M}[1]{\Mat_{#1}(S)}

\hypersetup{
 pdftitle={The finite basis problem and subvariety intervals for max-plus matrix semirings},
 pdfauthor={Jun Jiao and Mengya Yue},
 pdfsubject={Algebraic countermodels and continuum subvariety intervals},
 pdfkeywords={semiring, max-plus matrix, variety, identity, finite basis problem}
}

\begin{document}
\title[Max-plus matrix semirings]
{The finite basis problem and subvariety intervals for max-plus matrix semirings}

\author{Jun Jiao}

\author{Mengya Yue}
\date{}
\subjclass[2020]{16Y60, 03C05, 08B15}
\keywords{semiring, max-plus matrix, upper triangular matrix, variety, identity,
finite basis problem, subvariety interval}

\begin{abstract}
Let $S=(\N,\max,+)$ be the max-plus algebra of the nonnegative integers,
and let $\overline S$ be obtained from $S$ by adjoining $-\infty$ as a
semiring zero. We first prove that the varieties generated by the full
matrix semirings $\Mat_n(S)$ and by the upper triangular matrix
semirings $\UT_n(\overline S)$ form strictly ascending chains. Every
interval between distinct members of either chain has cardinality
$2^{\aleph_0}$ and contains a chain and an antichain of that cardinality.
The proof uses finite syntactic semirings associated with prime-cycle
languages and an independent family of identities.

We then construct finite commutative ai-semirings from languages of
monomials closed under divisibility and prove their bounded-variable
property by cancellation, explicit integer evaluations and independence
in a free abelian group. A loop-block identity for upper triangular
matrices and dimension-independent cubic-padding identities for full
matrices show that every variety in each of the above intervals has
infinite axiomatic rank. In fact, every variety between $\V(S)$ and
the variety generated by all full matrix dimensions has infinite
axiomatic rank. These varieties are therefore nonfinitely based, also
relative to the ai-semiring axioms.

Finally, for every fixed full matrix dimension, we prove equational
equivalence over all six positive or nonnegative integer, rational and
real coefficient domains. The original convex-ideal countermodels and
their full-matrix transfer proof are recalled separately for comparison;
they are not used in the algebraic proofs. Throughout, the signature
consists of two binary operations and contains no constant symbols.
\end{abstract}

\maketitle
\markboth{JUN JIAO AND Mengya Yue}{MAX-PLUS MATRIX SEMIRINGS}

\section{Introduction and preliminaries}

By an \emph{additively idempotent semiring} (ai-semiring for short) we
mean an algebra $(A,\oplus,\cdot)$ such that the additive reduct
$(A,\oplus)$ is a commutative idempotent semigroup, the multiplicative
reduct $(A,\cdot)$ is a semigroup, and the distributive laws
\[
 x(y\oplus z)\approx xy\oplus xz,
 \qquad (x\oplus y)z\approx xz\oplus yz
\]
hold. Thus, together with these two distributive laws, the ai-semiring
axioms are
\[
 \begin{gathered}
 (x\oplus y)\oplus z\approx x\oplus(y\oplus z),\qquad
 x\oplus y\approx y\oplus x,\qquad x\oplus x\approx x,\\
 (xy)z\approx x(yz).
 \end{gathered}
\]
We usually omit the multiplication symbol. Define an order on $A$ by
\[
 a\leq b\quad\Longleftrightarrow\quad a\oplus b=b.
\]
This order is compatible with both operations. Whenever an order on an
ai-semiring is mentioned, it refers to this order. For terms, the
inequality $u\leq v$ abbreviates the identity $u\oplus v\approx v$.

For an ai-semiring $A$, let $\V(A)$ denote the variety generated by $A$,
and let $\Id(A)$ denote the set of all identities satisfied by $A$.
A variety is a class closed under homomorphic images, subalgebras and
arbitrary direct products. By Birkhoff's theorem, these are exactly the
classes defined by identities; see \cite{BS}. Consequently, $A$ and
$\V(A)$ have the same equational theory.
A variety is \emph{finitely based} if it can be defined by finitely many
identities; otherwise, it is \emph{nonfinitely based}. An algebra is
finitely based if its generated variety is finitely based. All varieties
considered here are in the signature consisting of the two binary
operations. In particular, the existence of a zero or an identity element
in an algebra does not make that element a named constant.

The present paper concerns the max-plus algebras
\[
 S=(\N,\oplus,\odot),\qquad
 \overline S=(\N\cup\{-\infty\},\oplus,\odot),
\]
where $\N=\{0,1,2,\ldots\}$ and
\[
 a\oplus b=\max\{a,b\},\qquad a\odot b=a+b.
\]
Here the addition on the right is ordinary arithmetic addition, and
$a+(-\infty)=(-\infty)+a=-\infty$. The numerical value $0$ is a
multiplicative identity in both algebras. The element $-\infty$ is the
additive identity and multiplicative zero of $\overline S$.

Let $n\geq1$ be an integer. We denote by $\Mat_n(S)$ the ai-semiring of
all $n\times n$ matrices over $S$, and by $\UT_n(\overline S)$ the
ai-semiring of all upper triangular matrices over $\overline S$.
Thus an entry below the diagonal of a matrix in $\UT_n(\overline S)$
is $-\infty$, and entries on or above the diagonal may also equal
$-\infty$. In both cases, the matrix operations are given by
\[
 (A\oplus B)_{ij}=\max\{A_{ij},B_{ij}\},\qquad
 (AB)_{ij}=\max_{1\leq k\leq n}(A_{ik}+B_{kj}).
\]
For convenience, put
\begin{equation}\label{eq:varieties}
 \Fvar_n=\V(\Mat_n(S)),\qquad
 \Uvar_n=\V(\UT_n(\overline S)).
\end{equation}

Aceto, \'Esik and Ing\'olfsd\'ottir \cite{AEI,AEIreport} proved that
$S$ has no finite equational basis. The finite basis problem remains subtle even for small finite
ai-semirings; see the flat extensions in \cite{RJZL} and the
four-element examples in \cite{YRG}. 
The estimate uses nonnegative finite entries and does not extend directly
to upper triangular matrices after $-\infty$ has been adjoined.
Moreover, an embedding of $S$ into a matrix semiring does not, by itself,
transfer the nonfinite basis property. A separate family of matrix
identities and countermodels is required.
In the full-matrix construction, a variable $z$ is placed on either
side of each $x_j$ on the left, and is replaced by $z^3$ on the right.
The cube gives a uniform entrywise lower bound without requiring the
matrices to commute. Substitution of an actual multiplicative identity
for $z$ in a scalar countermodel removes the padding.

The multiplicative monoid of two-by-two upper triangular tropical
matrices was studied by Chen, Hu, Luo and Sapir \cite{CHLS}.
Here both semiring operations are retained. The distinction between
finite nonnegative coefficients and coefficients that include negative
numbers or $-\infty$ is essential for the cubic estimate.
The coefficient-domain results below identify the full matrix theories
over six positive or nonnegative domains; they do not assert such an
identification with all real tropical coefficients.

Another question concerns the subvariety intervals between different
matrix dimensions. Related constructions for Boolean and flat semirings
appear in \cite{Boolean,Volkov,S7note,T2note}. 
of finite commutative countermodels. Their bounded-variable property
will be proved directly, without the convex-ideal construction of
\cite{AEI,AEIreport} or a geometric characterization of scalar identities.

Let $\mathcal X$ and $\mathcal Y$ be varieties with
$\mathcal X\subseteq\mathcal Y$. As usual, the interval
$[\mathcal X,\mathcal Y]$ consists of all varieties $\mathcal Z$ such that
$\mathcal X\subseteq\mathcal Z\subseteq\mathcal Y$.
The matrix-variety chains considered in this paper are strictly
ascending. Accordingly, the adjacent full-matrix interval is
$[\V(\Mat_n(S)),\V(\Mat_{n+1}(S))]$.

For a class $\mathcal C$ of algebras, let $\Id_r(\mathcal C)$ be the set
of its identities involving at most $r$ distinct variables. The class
has \emph{infinite axiomatic rank} if $\Id_r(\mathcal C)$ does not imply
all its identities for any positive integer $r$. Since every finite
set of identities has a bound on its number of variables, infinite
axiomatic rank implies the nonfinite basis property.
We now state the main result.

\begin{thm}\label{thm:main}
Let $n$ and $m$ be positive integers with $n<m$. Then
\begin{equation}\label{eq:maincard}
 \begin{gathered}
 \Fvar_n\subsetneq\Fvar_m,\qquad \Uvar_n\subsetneq\Uvar_m,\\
 |[\Fvar_n,\Fvar_m]|=|[\Uvar_n,\Uvar_m]|=2^{\aleph_0}.
 \end{gathered}
\end{equation}
For every $n\geq1$, the intervals $[\Fvar_n,\Fvar_{n+1}]$ and
$[\Uvar_n,\Uvar_{n+1}]$ each contain a join-preserving order-embedded
copy of $\Pal(\PP)$, where $\PP$ is the set of odd primes.
Consequently, each interval in \eqref{eq:maincard} contains a chain
and an antichain of cardinality $2^{\aleph_0}$.

Every variety in every interval in \eqref{eq:maincard} has infinite
axiomatic rank. In particular, every such variety is nonfinitely based,
both absolutely and relative to the ai-semiring axioms.
\end{thm}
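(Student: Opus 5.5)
The plan is to prove the three assertions in turn: strictness of the chains, the continuum-sized intervals, and infinite axiomatic rank.

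\emph{Strictness.} The inclusions $\Fvar_n\subseteq\Fvar_m$ and $\Uvar_n\subseteq\Uvar_m$ come from embeddings. For $\UT_n(\overline S)$ we embed it as the upper-left corner of $\UT_m(\overline S)$, padding with $-\infty$. For $\Mat_n(S)$, where no $-\infty$ padding is available, we note that $\Mat_n(S)$ is a quotient of a subsemiring of $\Mat_m(S)$: take block-diagonal matrices with a scalar block repeated, or more simply realize $\Mat_n(S)$ as a retract. Strictness and the continuum intervals will both come from one family of identities. For each odd prime $p$ we attach a prime-cycle language $L_p$, namely words on a suitable alphabet whose letter-count pattern encodes walks of length $\equiv 0 \pmod p$ through a cycle of length tied to $n+1$. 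We take its finite syntactic ai-semiring $\Syn_p$ (finite unions of syntactic classes). Then:
\begin{enumerate}
\item $\Syn_p\in\Fvar_{n+1}$ (resp.\ $\Uvar_{n+1}$), since $L_p$ is recognized by an $(n+1)$-state max-plus automaton whose transition matrices lie in $\Mat_{n+1}(S)$ or $\UT_{n+1}(\overline S)$.
\item There is an identity $\sigma_p$, a "cycle identity" of depth $n+1$, holding in $\Mat_n(S)$ (resp.\ $\UT_n(\overline S)$) and in every $\Syn_q$ with $q\neq p$, but failing in $\Syn_p$.
\end{enumerate}
For $A\subseteq\PP$ we set $\mathcal Z_A=\Fvar_n\vee\V(\Syn_p:p\in A)$. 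By the independence in (ii), $A\mapsto\mathcal Z_A$ is injective, order-embedding and join-preserving. This yields the copy of $\Pal(\PP)$ in the adjacent interval, and hence in $[\Fvar_n,\Fvar_m]$ for all $m>n$, since the chains are nested. Taking a maximal chain and an almost-disjoint family in $\Pal(\PP)$ gives the chain and antichain of size $2^{\aleph_0}$. The upper bound $2^{\aleph_0}$ holds because the signature is countable.

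\emph{Infinite axiomatic rank.} Since every variety in the intervals lies between $\V(S)$ and $\Fvar_\infty$ (resp.\ $\Uvar_\infty$), it suffices to produce, for each $r$, an identity $\tau_r$ and an algebra $B_r$ with the following properties: $\tau_r$ holds in the top variety, $B_r$ satisfies all $r$-variable identities of the top variety, $B_r$ fails $\tau_r$, and $B_r$ does not lie in the bottom variety's theory beyond that. The standard sandwich argument then runs as follows. For $\mathcal Z$ in the interval, $\Id_r(\mathcal Z)\subseteq\Id_r(\text{bottom})$... We have to be careful with the direction here. We need $B_r\models\Id_r(\mathcal Z)$ and $B_r\not\models\tau_r$ with $\tau_r\in\Id(\mathcal Z)$. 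So we require $B_r\models\Id_r(\V(S))$, since $\Id_r(\mathcal Z)\subseteq \Id_r(\V(S))$, and $\tau_r\in\Id(\Fvar_\infty)\subseteq\Id(\mathcal Z)$. For $B_r$ we use the finite commutative ai-semirings built from divisibility-closed monomial languages, as announced in the abstract. Their bounded-variable property, that every $r$-variable identity of $S$ holds in $B_r$, is proved by cancellation, integer evaluations and independence in a free abelian group. For $\tau_r$ we use the cubic-padding identities $\sum_j z x_j z\cdots \le \cdots z^3\cdots$ for full matrices, and the loop-block identity for $\UT$. We check that they hold in all dimensions via the entrywise lower bound furnished by $z^3$. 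Then substituting the multiplicative identity for $z$ in $B_r$ reduces $\tau_r$ to the scalar identity that $B_r$ fails.

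The main obstacle is (ii) together with the bounded-variable property of $B_r$. First I would try to make $\sigma_p$ count residues mod $p$ of cycle lengths, so that $n\times n$ matrices, whose cycles have length at most $n$, satisfy it via eventual periodicity of max-plus powers. I expect the transfer to $\Uvar$ to need extra care with $-\infty$ entries.
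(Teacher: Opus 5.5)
\textbf{There is a genuine gap.} The continuum part rests entirely on your items (i) and (ii), and you do not supply either. Strictness, which you also derive from them, is therefore unproved too. The mechanism you sketch cannot supply them, for three reasons.
\begin{enumerate}
\item The primes range over infinitely many values while the dimensions are fixed at $n$ versus $n+1$. For $p>n+1$, residues mod $p$ of cycle lengths at most $n$ or $n+1$ are just those lengths. So nothing prime-specific separates $\Fvar_n$ from $\Fvar_{n+1}$ this way.
\item Every $\Mat_k(S)$ satisfies $x\oplus y\le xy$, because $(AB)_{ij}\ge A_{ij}+B_{jj}\ge A_{ij}$ for nonnegative entries. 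This forces any syntactic semiring in $\Fvar_{n+1}$ to come from a language closed under deleting factors (or under inserting them, with existential acceptance). That rules out accepting words according to lengths or letter counts mod $p$.
\item A single recognizing $(n+1)$-state automaton does not put the syntactic semiring into the variety. You need finitely many evaluations under which each rejected word is strictly larger, at a fixed coordinate, than every accepted word. This is where a finite, factor-closed language is used.
\end{enumerate}

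\textbf{The missing idea} is to separate the two roles. The dimension is detected by the standard identity $E_{2n}\approx O_{2n}$. It holds in $\Mat_n$ over any commutative ai-semiring, hence in $\Fvar_n$ and $\Uvar_n$. It fails in $\Mat_{n+1}(S)$ and $\UT_{n+1}(\overline S)$ under alternating loop/advance matrices, which force the identity permutation at coordinate $(1,n+1)$. The prime lives in the alphabet, not in the matrices: a directed odd $p$-cycle $v_0,\dots,v_{p-1}$ plus $2n-2$ anchors. The accepted words of length $2n$ are the even permutations of $(a_1,\dots,a_{2n-2},v_i,v_{i+1})$. Then $\sigma_{n,p}\colon z_1\cdots z_{2n-2}x_1x_0\le\bigoplus_iE_{2n}(z_1,\dots,z_{2n-2},x_i,x_{i+1})$ follows in $\Fvar_n$ and $\Uvar_n$ from the standard identity, because its left side is an odd permutation. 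A failure in $\Alg_{n,q}$ forces the images of the $x_i$ to walk around the $q$-cycle in a constant direction, so $q\mid p$. Oddness of $p$ is what prevents anchors from being sent to cycle letters.

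\textbf{Infinite axiomatic rank.} Your argument is correct for full matrices and is the paper's: $C_{1,m}$, $\eps_m$, and $z\mapsto[\{1\}]$. It is not correct for $\Uvar$. Your method gives no $\Uvar_\infty$ version. Cubic padding already fails in $\UT_2(\overline S)$ when $z$ is the tropical identity matrix. Moreover, the loop-block identity is not a $z^3$ estimate. For $[\Uvar_n,\Uvar_m]$ you must work at the top dimension. Use $\Theta_{m,m'}\colon p_{m'}^{m}\le\bigoplus_{h,i}p_{m'}^hq_{m',i}p_{m'}^{m-1-h}$ with $m'=\max\{3,r+1\}$. It holds in $\UT_m(\overline S)$ because a maximizing path through $m$ nondecreasing states has at most $m-1$ advances. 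So some entire block $p_{m'}$ is read by loops at a single state, where the scalar cyclic argument applies. The countermodel is $C_{m,m'}$, whose exponent equals the dimension, so your $B_r$ must depend on the dimension and not only on $r$.

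\textbf{Two smaller points.}
\begin{enumerate}
\item $S$ has no absorbing zero, so block-diagonal padding does not embed $\Mat_n(S)$ into $\Mat_m(S)$. Use instead $X\mapsto(X_{f(i)f(j)})$ for a surjection $f\colon[m]\to[n]$, which is a homomorphism because addition is idempotent.
\item A maximal chain in $\Pal(\PP)$ may be countable: finite initial segments of an enumeration, together with $\PP$, form one. Take Dedekind cuts of a copy of $\Q$ instead.
\end{enumerate}
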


The two families of finite algebras used below have different roles.
Finite syntactic semirings of prime-cycle languages distinguish the
matrix dimensions and the selected primes. Finite commutative semirings
of divisor languages distinguish bounded-variable identities from
identities valid in the matrix semirings. The latter construction uses
only divisibility, integer evaluations, cancellation and independence
in a free abelian group.

The paper is organized as follows. In Section~\ref{sec:matrix}, we
record matrix products, scalar identities and embeddings, and then
establish the identities that distinguish successive dimensions.
Sections~\ref{sec:syntactic} and \ref{sec:cycles} construct finite
syntactic semirings and independent identities. Section~\ref{sec:continuum}
determines the cardinality of the matrix-variety intervals.
Section~\ref{sec:algebra} gives the finite commutative countermodels
and proves their bounded-variable property. Sections~\ref{sec:triangularnfb}
and \ref{sec:fullnfb} apply them to upper triangular and full matrices,
respectively. Section~\ref{sec:domains} treats coefficient domains.

\section{Matrix embeddings and standard identities}\label{sec:matrix}

In this section, we first record matrix products and the scalar cyclic
identities. We then establish embeddings and use a standard identity
to distinguish successive dimensions.

\subsection{Matrix products and scalar identities}
For any nonempty $D\subseteq\R_{\geq0}$ closed under maximum and
ordinary addition, write $S_D=(D,\max,+)$. The order on $\Mat_n(S_D)$
is the entrywise order. Full matrices with finite entries in $\N$
need not have an ambient multiplicative identity when $n>1$, since
the usual max-plus identity matrix has off-diagonal entries $-\infty$.
No ambient identity is required in the arguments for these full
matrix algebras.

\begin{lem}\label{src:lem:paths}
Let $T_1,\ldots,T_\ell$ be $n\times n$ max-plus matrices with finite
real entries. With $k_0=a$ and $k_\ell=b$,
\begin{equation}\label{src:eq:path}
(T_1\cdots T_\ell)_{ab}
 =\max_{k_1,\ldots,k_{\ell-1}}
   \sum_{h=1}^{\ell}(T_h)_{k_{h-1}k_h}.
\end{equation}
If every entry of $T_h$ is at most $\alpha_h$, then every entry of
their product is at most $\sum_h\alpha_h$. If every entry of $T_h$
is at least $\beta_h$, then every entry of their product is at least
$\sum_h\beta_h$.
\end{lem}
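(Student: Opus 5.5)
The path formula \eqref{src:eq:path} will be proved by induction on $\ell$, and the two bounds will then follow from it directly.

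For $\ell=1$ there are no intermediate indices, since $k_0=a$ and $k_1=b$. The right-hand side is then just $(T_1)_{ab}$, and there is nothing to prove.

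For the inductive step, assume the formula for $T_1\cdots T_{\ell-1}$ with $\ell\geq2$. By the definition of max-plus multiplication,
\[
 (T_1\cdots T_\ell)_{ab}=\max_{1\leq k\leq n}\bigl((T_1\cdots T_{\ell-1})_{ak}+(T_\ell)_{kb}\bigr).
\]
Substitute the inductive hypothesis for $(T_1\cdots T_{\ell-1})_{ak}$, with endpoint $k_{\ell-1}=k$. Since all entries are finite reals, addition of the fixed real $(T_\ell)_{kb}$ distributes over the finite maximum: $\max_i c_i+d=\max_i(c_i+d)$. The nested maxima over $k_1,\ldots,k_{\ell-2}$ and then over $k_{\ell-1}=k$ combine into a single maximum over all tuples $(k_1,\ldots,k_{\ell-1})$, which gives \eqref{src:eq:path}. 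Associativity of matrix multiplication is standard, so it does not matter how the product is bracketed.

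For the bounds, each path sum $\sum_h(T_h)_{k_{h-1}k_h}$ consists of one entry from each $T_h$.
\begin{enumerate}
\item If every entry of $T_h$ is at most $\alpha_h$, then every path sum is at most $\sum_h\alpha_h$. A maximum of finitely many such sums is also at most $\sum_h\alpha_h$.
\item If every entry of $T_h$ is at least $\beta_h$, then every path sum is at least $\sum_h\beta_h$. The index set of paths is nonempty, for example the path with all intermediate indices equal to $1$, so the maximum is at least that path's value and hence at least $\sum_h\beta_h$.
\end{enumerate}

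The only point needing care is this nonemptiness in the lower bound, together with the finiteness of entries, which is what allows distributing addition over the maximum without any $-\infty$ conventions. Otherwise the argument is routine.
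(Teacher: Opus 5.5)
Your proposal is correct and follows essentially the same route as the paper: induction on $\ell$, appending $T_\ell$ and merging the two finite maxima, then reading off both bounds from individual path sums. Your explicit remark that the set of index choices is nonempty is only implicit in the paper's lower-bound step.
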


\begin{proof}
For $\ell=1$, the formula has no intermediate indices. For $\ell=2$,
it is the definition of matrix multiplication. If it holds for
$\ell-1$, multiplying the first $\ell-1$ factors by $T_\ell$ adds one
intermediate index and one summand; the two finite maxima combine
to give \eqref{src:eq:path}. This proves the formula by induction.
Under the upper bounds, each path sum is at most
$\sum_h\alpha_h$, and hence so is their maximum. Under the lower
bounds, any choice of intermediate indices has path sum at least
$\sum_h\beta_h$, so the maximum has the same lower bound.
\end{proof}

\begin{lem}\label{src:lem:embedding}
For every $n\geq1$, $\Smax$ embeds into $\M n$. Consequently,
\begin{equation}\label{src:eq:idinclusion}
\Id(\M n)\subseteq\Id(\Smax),
\qquad
\V(\Smax)\subseteq\V(\M n).
\end{equation}
\end{lem}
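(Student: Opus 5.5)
The embedding I will use sends a scalar $a\in\N$ to the constant matrix $J_a\in\M n$ whose entries all equal $a$. Define $\varphi\colon\Smax\to\M n$ by $\varphi(a)=J_a$.

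Injectivity is immediate, since $J_a$ determines $a$ through any one entry. Additivity follows entrywise: $(J_a\oplus J_b)_{ij}=\max\{a,b\}$, so $J_a\oplus J_b=J_{a\oplus b}$. For multiplication, the definition of the matrix product, or the case $\ell=2$ of Lemma~\ref{src:lem:paths}, gives
\[
 (J_aJ_b)_{ij}=\max_{1\leq k\leq n}(a+b)=a+b .
\]
Hence $J_aJ_b=J_{a\odot b}$, and $\varphi$ is an injective homomorphism of ai-semirings.

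I avoid the more obvious candidate, the diagonal matrix with $a$ on the diagonal and $-\infty$ elsewhere. That matrix has entries outside $\N$, so it does not lie in $\M n$. Because the signature has no constants, no identity or zero element needs to be preserved, so the constant-matrix map already suffices.

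The consequences now follow. $\Smax$ is isomorphic to the subalgebra $\varphi(\Smax)$ of $\M n$. Varieties are closed under subalgebras, so $\Smax\in\V(\M n)$ and therefore $\V(\Smax)\subseteq\V(\M n)$. In the same way, every identity of $\M n$ holds in each of its subalgebras, so it holds in $\Smax$; this gives $\Id(\M n)\subseteq\Id(\Smax)$. No step here is a real obstacle. The only point needing care is to choose an embedding whose entries stay within $\N$.
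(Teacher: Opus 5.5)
Your proposal is correct and is essentially the paper's own proof: the paper also sends $a$ to the constant matrix with every entry $a$, checks that these matrices add and multiply like scalars, and obtains both inclusions from the resulting copy of $\Smax$ inside $\M{n}$. Your remark that the diagonal map $a\mapsto aI$ is unavailable here, because its $-\infty$ entries lie outside $\N$, is also accurate; the paper uses that map only for $\UT_k(\overline S)$.
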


\begin{proof}
Let $C_n(a)$ be the matrix all of whose entries are $a$. For
$a,b\in\N$,
\[
C_n(a)\oplus C_n(b)=C_n(\max\{a,b\}),
\qquad C_n(a)C_n(b)=C_n(a+b).
\]
Indeed, every candidate in every entry of the latter product is the
same number $a+b$. The injective map $a\mapsto C_n(a)$ therefore
preserves both operations. An identity holding for all matrix
assignments holds, in particular, on these constant matrices; this
gives the identity inclusion. The variety inclusion follows from
the embedding.
\end{proof}

For $m\geq2$, let $i^+$ denote the cyclic successor of $i$ in
$\{1,\ldots,m\}$. Thus $m^+=1$. Define
\begin{equation}\label{eq:scalarwords}
p_m=x_1x_2\cdots x_m,\qquad
q_{m,i}=\prod_{j=1}^{m}x_j^{\,1+\mathbf1_{j=i}-\mathbf1_{j=i^+}},
\qquad
q_m=\bigoplus_{i=1}^{m}q_{m,i}.
\end{equation}
The product defining $q_{m,i}$ is taken in increasing order of $j$.
An exponent zero means that the factor is omitted; it does not
introduce a constant or an empty word. Every $q_{m,i}$ has total
length $m$. We use the scalar identities
\begin{equation}\label{src:eq:em}
e_m:\qquad p_m\oplus q_m\approx q_m.
\end{equation}

\begin{lem}\label{src:lem:scalarvalid}
The algebra $\Smax$ satisfies $e_m$ for every $m\geq2$.
\end{lem}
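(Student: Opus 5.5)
Under an assignment of values $a_1,\ldots,a_m\in\N$ to $x_1,\ldots,x_m$, the identity $e_m$ says that
\[
 p_m=\sum_{j=1}^m a_j\le\max_{1\le i\le m} q_{m,i},
\]
where the right side is the maximum over $i$ of the evaluated words $q_{m,i}$. In $\Smax$ multiplication is ordinary addition, so the order of the factors is irrelevant. The word $q_{m,i}$ contains $x_i$ twice, omits $x_{i^+}$, and contains every other variable once. It therefore evaluates to
\[
 q_{m,i}(a)=\sum_{j=1}^m a_j + a_i - a_{i^+}.
\]
Writing $\sigma=\sum_j a_j$, we get $q_m(a)=\sigma+\max_i(a_i-a_{i^+})$. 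The identity $p_m\oplus q_m\approx q_m$ is exactly $p_m\le q_m$ in the natural order, which here is the usual order on $\N$. So it suffices to show
\[
 \max_{1\le i\le m}(a_i-a_{i^+})\ge 0 .
\]

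This follows from a telescoping argument around the cycle. Since $i\mapsto i^+$ is a cyclic permutation of $\{1,\ldots,m\}$, we have
\[
 \sum_{i=1}^m(a_i-a_{i^+})=0.
\]
A finite family of real numbers with sum $0$ cannot consist entirely of negative numbers, so some term is nonnegative. An even more direct choice is to take $i$ with $a_i$ maximal: then $a_i\ge a_{i^+}$. For that $i$ we get $q_{m,i}(a)\ge\sigma=p_m(a)$, and hence
\[
 \max\{p_m(a),q_m(a)\}=q_m(a).
\]
Because $a$ was an arbitrary assignment, $\Smax$ satisfies $e_m$.

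There is no real obstacle. The one point needing care is the bookkeeping of exponents when $m=2$: there $i^+\ne i$ still holds, $q_{2,1}=x_1^2$ and $q_{2,2}=x_2^2$, and the formula for $q_{m,i}(a)$ above remains valid. Each $q_{m,i}$ is a genuine nonempty word, because its total length is $m\ge2$, so no constants are involved.
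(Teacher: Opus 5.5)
Your proof is correct and follows the paper's argument: evaluate $q_{m,i}$ as $\sum_j a_j+a_i-a_{i^+}$, then find an index with $a_i\geq a_{i^+}$. The paper obtains that index by contradiction (strict inequalities around the cycle would give $a_1<a_1$), while your choice of a maximal $a_i$, or your telescoping sum, is an equally valid direct form of the same observation.
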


\begin{proof}
Under a scalar assignment $x_j=a_j$, put $h=\sum_ja_j$.
The value of $p_m$ is $h$, while the value of $q_{m,i}$ is
$h+a_i-a_{i^+}$. At least one index satisfies $a_i\geq a_{i^+}$:
otherwise the strict inequalities would continue around the cycle
and imply $a_1<a_1$. Thus the maximum of the right-hand monomials
is at least $h$.
\end{proof}

\begin{example}\label{src:ex:scalarfails}
The identity $e_2$, namely $xy\leq x^2\oplus y^2$, fails in
$\M2$. Let
\[
A=\begin{pmatrix}1&0\\0&1\end{pmatrix},
\qquad
B=\begin{pmatrix}0&1\\1&0\end{pmatrix}.
\]
Then
\[
AB=\begin{pmatrix}1&2\\2&1\end{pmatrix},
\qquad
A^2=B^2=\begin{pmatrix}2&1\\1&2\end{pmatrix}.
\]
The off-diagonal entries violate the required inequality.
Thus the scalar separating identities themselves cannot be used
as matrix-valid separating identities.
\end{example}

\subsection{Dimension embeddings and standard identities}
We first recall the following embedding.

\begin{lem}\label{lem:embedding}
Let $A$ be any ai-semiring, let $m\geq n$, and let $f:[m]\twoheadrightarrow[n]$.
Then
\[
 \Phi_f:\Mat_n(A)\longrightarrow \Mat_m(A),\qquad
 \Phi_f(X)_{ij}=X_{f(i),f(j)}
\]
is an injective homomorphism. In particular,
$\V(\Mat_n(A))\subseteq\V(\Mat_m(A))$.
\end{lem}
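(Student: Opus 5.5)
The plan is a direct verification that $\Phi_f$ preserves both operations and is injective. After that, the variety inclusion follows from Birkhoff's theorem as in Lemma~\ref{src:lem:embedding}. The sum case is immediate: $\Phi_f(X\oplus Y)_{ij}=X_{f(i)f(j)}\oplus Y_{f(i)f(j)}=(\Phi_f(X)\oplus\Phi_f(Y))_{ij}$.

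For injectivity, surjectivity of $f$ does the work. Given $(a,b)\in[n]^2$, we choose $i\in f^{-1}(a)$ and $j\in f^{-1}(b)$, so $\Phi_f(X)_{ij}=X_{ab}$. Hence $X$ is recovered from $\Phi_f(X)$.

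For the product, compute
\[
(\Phi_f(X)\Phi_f(Y))_{ij}=\bigoplus_{k=1}^{m}X_{f(i)f(k)}Y_{f(k)f(j)} .
\]
Group the indices $k$ by the value $l=f(k)\in[n]$. Each fibre $f^{-1}(l)$ is nonempty because $f$ is surjective. It contributes the summand $X_{f(i)l}Y_{lj'}$, with $j'=f(j)$, repeated $|f^{-1}(l)|\ge1$ times. Additive idempotence collapses these repeats to a single copy, and commutativity and associativity of $\oplus$ allow the regrouping. The sum therefore becomes $\bigoplus_{l=1}^{n}X_{f(i)l}Y_{lf(j)}=(XY)_{f(i)f(j)}=\Phi_f(XY)_{ij}$.

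The step needing care is exactly this use of surjectivity together with idempotence. Nonempty fibres guarantee that every $l\in[n]$ occurs, and idempotence removes the multiplicities. No zero or identity element of $A$ is needed, which matters since the signature has no constants. Finally, $\Mat_n(A)$ is isomorphic to a subalgebra of $\Mat_m(A)$, and varieties are closed under subalgebras and isomorphic copies. This gives $\V(\Mat_n(A))\subseteq\V(\Mat_m(A))$.
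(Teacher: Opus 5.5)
Your proposal is correct and follows essentially the same route as the paper's proof. In both arguments, addition is checked entrywise, the product is reduced to $\bigoplus_{t=1}^{n} X_{f(i),t}Y_{t,f(j)}$ by grouping indices by fibre using surjectivity and idempotence, and injectivity is obtained by choosing preimages in each fibre.
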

\begin{proof}
It is immediate that $\Phi_f$ preserves addition. Let $X,Y\in \Mat_n(A)$.
Since $f$ is surjective and addition is idempotent, we have
\[
 (\Phi_f(X)\Phi_f(Y))_{ij}
 =\bigoplus_{k=1}^m X_{f(i),f(k)}Y_{f(k),f(j)}
 =\bigoplus_{t=1}^n X_{f(i),t}Y_{t,f(j)}
 =\Phi_f(XY)_{ij}.
\]
Thus $\Phi_f$ also preserves multiplication. For each $a\in[n]$, choose
$i_a\in f^{-1}(a)$. Then $X_{ab}=\Phi_f(X)_{i_a,i_b}$ for all $a,b$,
which shows that $\Phi_f$ is injective. Therefore $\Phi_f$ is a semiring
embedding. This construction is also recorded in
\cite[Theorem 2.2]{S7note}.
\end{proof}

For upper triangular matrices, the standard zero yields the embedding
\begin{equation}\label{eq:utembedding}
 \UT_n(\overline S)\longrightarrow\UT_{n+1}(\overline S),
 \qquad X\longmapsto
 \begin{pmatrix}X&-\infty\\-\infty&-\infty\end{pmatrix}.
\end{equation}
Here an off-diagonal block labelled $-\infty$ has every entry $-\infty$.
The embedding need not preserve the multiplicative identity; no identity
constant is in the signature.

For an even integer $d=2n$ and an ordered tuple $\mathbf z=(z_1,\ldots,z_d)$,
put
\begin{equation}\label{eq:evenodd}
 E_d(\mathbf z)=\bigoplus_{\sgn(\pi)=1}z_{\pi(1)}\cdots z_{\pi(d)},
 \qquad
 O_d(\mathbf z)=\bigoplus_{\sgn(\pi)=-1}z_{\pi(1)}\cdots z_{\pi(d)}.
\end{equation}
The sums range over permutations in the symmetric group on $[d]$.

\begin{lem}\label{lem:AL}
If $A$ is a commutative ai-semiring, then
\begin{equation}\label{eq:AL}
 \Mat_n(A)\models E_{2n}(z_1,\ldots,z_{2n})
 \approx O_{2n}(z_1,\ldots,z_{2n}).
\end{equation}
Consequently this identity holds in both $\Fvar_n$ and $\Uvar_n$.
\end{lem}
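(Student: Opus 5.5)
This is the Amitsur–Levitzki type identity in its ai-semiring form. The plan is to expand both sides entrywise by the path formula and then build a sign-reversing bijection between the resulting monomials, in the spirit of the graph-theoretic proof of Swan and Rosset.

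\textbf{Reduction to walk labellings.} Fix matrices $Z_1,\ldots,Z_{2n}\in\Mat_n(A)$ and entries $(a,b)$. As in Lemma~\ref{src:lem:paths} (the path expansion holds verbatim over any ai-semiring), the $(a,b)$ entry of $E_{2n}$ is the sum over all even $\pi$ and all index sequences $a=k_0,k_1,\ldots,k_{2n}=b$ of the product
\[
\prod_h (Z_{\pi(h)})_{k_{h-1}k_h}.
\]
Because $A$ is commutative, each such summand depends only on the multiset of pairs $\{(\pi(h),k_{h-1},k_h)\}$. Encode it as a walk of length $2n$ in the complete directed graph with loops on $[n]$, from $a$ to $b$, whose $h$-th edge is labelled $\pi(h)$. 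Equivalently, a summand is given by a labelling of the edges of a multigraph $G$ with $2n$ edges by $1,\ldots,2n$ bijectively, together with an Eulerian trail from $a$ to $b$. The parity of the labelling read along the trail gives the sign.

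\textbf{Reduction of the identity.} Since addition is idempotent, it suffices to show the following. For every fixed edge-labelled multigraph, that is, every fixed monomial, if some Eulerian trail reads an even permutation, then some Eulerian trail reads an odd one, and conversely. It is not necessary to match multiplicities, only the existence of each term on the other side. This is precisely the combinatorial statement behind Swan's theorem: for a directed multigraph on $n$ vertices with $2n$ edges, the number of Eulerian trails from $a$ to $b$ counted with sign is zero. Once the signed count vanishes, a trail of one sign forces a trail of the opposite sign.

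\textbf{Plan for the signed count.} I would invoke Swan's theorem (or Rosset's exterior-algebra proof of Amitsur--Levitzki) as a black box. To stay self-contained, I would instead argue via Rosset as follows.
\begin{enumerate}
\item Over the free commutative ring $\Z[\text{entry variables}]$, the Amitsur--Levitzki theorem gives $\sum_\pi\sgn(\pi)\,Z_{\pi(1)}\cdots Z_{\pi(2n)}=0$.
\item Take the coefficient of a fixed monomial in the $(a,b)$ entry. This coefficient equals the number of even $(\pi,\text{trail})$ pairs minus the number of odd ones yielding that monomial.
\item These counts are nonnegative integers, and the difference is zero. Hence the monomial occurs on the $E$ side if and only if it occurs on the $O$ side.
\end{enumerate}

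\textbf{Transfer to $A$.} Every $(a,b)$ entry of $E_{2n}$ over $A$ is the image, under the evaluation map from the free commutative ai-semiring, of the set of monomials occurring with positive count. Idempotent addition collapses multiplicities, and commutativity justifies identifying each summand with its monomial. So the two sets of monomials coincide, and the two entries coincide.

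\textbf{The consequence.} The identity holds in $\Mat_n(S)$ because $S$ is commutative, and likewise in $\Mat_n(\overline S)$, which contains $\UT_n(\overline S)$ as a subalgebra. Varieties preserve identities, so the identity holds in $\Fvar_n$ and $\Uvar_n$.

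\textbf{Main obstacle.} The hard part is the step invoking Amitsur--Levitzki over $\Z$, and in particular checking that the monomial coefficient really equals the signed count with no cancellation between distinct labelled trails giving the same monomial. This is fine because the count is over pairs $(\pi,\text{index sequence})$, each contributing $\pm1$ to exactly one monomial, so the identification is exact.
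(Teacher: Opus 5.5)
Your proposal is correct and is essentially the paper's argument. You apply the Amitsur--Levitzki theorem to generic matrices over the commutative polynomial ring over $\Z$, note that the entries of $E_{2n}$ and $O_{2n}$ are polynomials with nonnegative coefficients that must agree coefficient by coefficient, and evaluate in the commutative ai-semiring, where idempotence means only the monomial supports matter; the upper triangular case then follows from $\UT_n(\overline S)\subseteq\Mat_n(\overline S)$, exactly as in the paper (the Swan/Eulerian-trail language is only a reformulation of this coefficient comparison and is not needed).
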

\begin{proof}
Let $Z_1,\ldots,Z_{2n}$ be generic $n\times n$ matrices over the
commutative polynomial ring over $\Z$ in their entries. 
Each
entry on either side is a polynomial with nonnegative integer
coefficients in commuting indeterminates. Equality over $\Z$ therefore
means equality coefficient by coefficient, before any evaluation.
Evaluating these positive polynomials in a commutative ai-semiring gives
\eqref{eq:AL}; a positive integer coefficient is interpreted as repeated
addition. No subtraction, additive inverse, or named constant is used
in the resulting semiring identity. The upper triangular assertion
follows by taking a subalgebra of $\Mat_n(\overline S)$.
\end{proof}

\begin{remark}\label{rem:UTparity}
For the upper triangular assertion, one can avoid the ring theorem.
A path for a word of length $2n$ in $n$ nondecreasing states has at most
$n-1$ strict advances, hence at least $n+1$ loops. Some state carries
at least two loop positions. Interchanging the two variable occurrences
at these positions changes the parity of their order but leaves the
path weight unchanged, since the two scalar loop factors commute.
Thus every summand represented on one side of \eqref{eq:AL} is also
represented on the other. The argument applies in both directions.
\end{remark}

\begin{lem}\label{lem:strict}
The identity \eqref{eq:AL} fails in $\Mat_{n+1}(S)$ and in
$\UT_{n+1}(\overline S)$.
\end{lem}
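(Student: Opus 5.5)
The plan is to exhibit explicit $(n+1)\times(n+1)$ matrices that realise a single "staircase path" from state $1$ to state $n+1$. This path can be traversed only in the identity order of the variables, and the identity permutation is even. So the entry $(1,n+1)$ of $E_{2n}$ will be strictly larger than that of $O_{2n}$. I will treat $\UT_{n+1}(\overline S)$ first, where the argument is purely combinatorial, and then adapt it to $\Mat_{n+1}(S)$ by replacing $-\infty$ with $0$ and $0$ with $1$.

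\emph{Upper triangular case.} For $k=1,\ldots,n$ I take $Z_{2k-1}=U_{kk}$ and $Z_{2k}=U_{k,k+1}$. Here $U_{ab}\in\UT_{n+1}(\overline S)$ denotes the matrix with entry $0$ in position $(a,b)$ and $-\infty$ elsewhere; these are upper triangular since $b\geq a$. By Lemma~\ref{src:lem:paths} (valid verbatim with $-\infty$ allowed), the $(1,n+1)$ entry of $Z_{\pi(1)}\cdots Z_{\pi(2n)}$ is finite exactly when there is an index sequence $1=k_0,k_1,\ldots,k_{2n}=n+1$ such that each step $(k_{h-1},k_h)$ is the support position of $Z_{\pi(h)}$. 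Every support position has column index at most row index plus one, and exactly $n$ advances are needed. Hence all of $U_{12},\ldots,U_{n,n+1}$ are used in increasing order. Each loop $U_{kk}$ must then be used while at state $k$, that is, between $U_{k-1,k}$ and $U_{k,k+1}$. This forces $\pi=\mathrm{id}$. Therefore the $(1,n+1)$ entry of $E_{2n}$ is $0$ and that of $O_{2n}$ is $-\infty$, so \eqref{eq:AL} fails.

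\emph{Full matrix case.} I take the same support positions, but now $Z_j\in\Mat_{n+1}(S)$ has entry $1$ at its designated position and $0$ elsewhere. By \eqref{src:eq:path}, the $(1,n+1)$ entry of a product is the maximum over index walks of the number of steps that land on the designated position of the corresponding factor. The value $2n$ is attained exactly by walks in which every step is designated. By the uniqueness argument above, this happens only for $\pi=\mathrm{id}$. Hence $E_{2n}$ has $(1,n+1)$ entry $2n$, while every odd permutation contributes at most $2n-1$.

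The only delicate step is the uniqueness claim, namely that a fully designated walk forces the identity permutation. I would prove it by induction on the current state $k$: at state $k$, the only available designated steps are the loop $U_{kk}$ and the advance $U_{k,k+1}$. Each may be used once, and after advancing one can never return. Therefore the loop must be used before the advance.
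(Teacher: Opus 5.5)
Your proposal is correct and is essentially the paper's proof: it uses the same loop positions $(i,i)$ and advance positions $(i,i+1)$, the same observation that a fully designated walk from $1$ to $n+1$ forces the identity (even) permutation, and, in the full-matrix case, the identical $0/1$ weighting. The only deviation is your $0/-\infty$ choice in $\UT_{n+1}(\overline S)$, which is valid for this lemma, whereas the paper keeps the $0/1$ weights there too (with $-\infty$ below the diagonal): Case~3 of Theorem~\ref{thm:recognition} reuses this count of designated steps, and your $0/-\infty$ assignment would not separate $v$ there from a shorter accepted word such as $v_2v_4\cdots v_{2n}$.
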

\begin{proof}
Consider the matrix indices $1,\ldots,n+1$. For $1\leq i\leq n$,
assign to $z_{2i-1}$ a matrix whose only entry of weight $1$ is in
position $(i,i)$, and assign to $z_{2i}$ a matrix whose only entry of
weight $1$ is in position $(i,i+1)$.
In the full-matrix case set all other entries to the numerical value $0$.
In the upper triangular case set all other upper triangular entries to
$0$ and all entries below the diagonal to $-\infty$.

At coordinate $(1,n+1)$, the word $z_1\cdots z_{2n}$ has weight $2n$.
Any permutation word with this weight must use only the designated
weight-$1$ entries. Since each letter is used once, it must traverse
all the loops and advances. The only possible order is
\[
 (1,1),(1,2),(2,2),(2,3),\ldots,(n,n),(n,n+1).
\]
This is the identity permutation, which is even. Every odd permutation
word has weight at most $2n-1$ at this coordinate. Therefore the two
sides of \eqref{eq:AL} differ.
\end{proof}

It follows from Lemmas~\ref{lem:embedding}--\ref{lem:strict} and
\eqref{eq:utembedding} that both matrix-variety chains are strictly
ascending. To determine the cardinality of the intervals between their
members, we shall construct an independent family of finite algebras
and identities.

\section{Finite syntactic semirings in matrix varieties}
\label{sec:syntactic}

In this section, we construct finite ai-semirings from finite languages
and show that they belong to the required matrix varieties. To this
end, we first fix the language notation.

For a finite alphabet $\Alphabet$, let $\Alphabet^+$ be the set of nonempty words,
and let $\Alphabet^*$ also include the empty word $\eps$. The empty word will
be used only as a context, not as an algebra element or a term constant.
A \emph{multilinear word} means a word in which no letter occurs twice.
The free ai-semiring on $\Alphabet$ is the set of nonempty finite languages
$P\subseteq\Alphabet^+$, with union as addition and language concatenation
as multiplication.

\begin{defn}\label{def:syntactic}
Let $L\subseteq\Alphabet^+$ be finite and closed under nonempty contiguous
factors. For nonempty finite languages $P,Q$, define
\begin{equation}\label{eq:syntactic}
 P\equiv_L Q
 \quad\Longleftrightarrow\quad
 \bigl(aPb\subseteq L\ \Longleftrightarrow\ aQb\subseteq L\bigr)
 \quad\text{for all }a,b\in\Alphabet^*.
\end{equation}
Let $\Syn(L)$ be the quotient by this relation.
\end{defn}

In \eqref{eq:syntactic}, the condition $aPb\subseteq L$ requires every
word of $aPb$ to belong to $L$. Thus it is a universal acceptance
condition, rather than a condition asserting that at least one word
is accepted.

\begin{lem}\label{lem:syntactic}
The relation $\equiv_L$ is a semiring congruence. The algebra $\Syn(L)$
is a finite ai-semiring, generated by the classes of letters, and
\[
 |\Syn(L)|\leq 2^{|L|}.
\]
All languages containing a word outside $L$ belong to one class
$\top$, which absorbs both operations. If every word of $L$ has length
at most $d$, every product of $d+1$ elements of $\Syn(L)$ equals $\top$.
Moreover,
\begin{equation}\label{eq:orderprofiles}
 [P]\leq[Q]
 \quad\Longleftrightarrow\quad
 (aQb\subseteq L\Longrightarrow aPb\subseteq L)
 \quad(a,b\in\Alphabet^*).
\end{equation}
\end{lem}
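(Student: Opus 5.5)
The plan is to encode each nonempty finite language $P$ by its \emph{profile} $\sigma(P)=\{(a,b)\in\Alphabet^*\times\Alphabet^*: aPb\subseteq L\}$. By definition, $P\equiv_L Q$ exactly when $\sigma(P)=\sigma(Q)$. All the listed properties will be read off from two formulas:
\[
 \sigma(P\cup Q)=\sigma(P)\cap\sigma(Q),\qquad
 \sigma(PQ)=\{(a,b): (a,Qb)\text{-condition}\}.
\]
The second formula is meant in the following sense: $aPQb\subseteq L$ holds iff $aPqb\subseteq L$ for every $q\in Q$, iff $apQb\subseteq L$ for every $p\in P$. Here I use that $aPQb=\bigcup_{q\in Q}aPqb=\bigcup_{p\in P}apQb$.

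For the congruence property, suppose $P\equiv_L P'$. Then $aPRb=\bigcup_{r\in R}aP(rb)$, so $aPRb\subseteq L$ iff $(a,rb)\in\sigma(P)$ for all $r\in R$. The same holds with $P'$ in place of $P$, so $PR\equiv_L P'R$. Multiplication on the left is symmetric. For addition, $a(P\cup R)b\subseteq L$ iff both $aPb\subseteq L$ and $aRb\subseteq L$, and this is unchanged when $P$ is replaced by $P'$. Thus $\equiv_L$ is a congruence of the free ai-semiring, and $\Syn(L)$ is an ai-semiring generated by the letter classes, since every finite language is a sum of products of letters.

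For the bound $|\Syn(L)|\le 2^{|L|}$, I will not use the profile directly, since $\Alphabet^*\times\Alphabet^*$ is infinite. Instead, map each class to the subset $P\cap L$ when $P\subseteq L$, and send every $P\not\subseteq L$ to one extra symbol. Suppose $P,Q\subseteq L$ and $P=Q$ as sets; then they are equal, so this map is well defined. It is injective onto the classes only up to equivalence, which is all we need: the number of classes is at most the number of subsets of $L$ plus one. The empty subset is excluded because languages are nonempty, so the count is at most $(2^{|L|}-1)+1=2^{|L|}$. This needs $L\neq\emptyset$; if $L=\emptyset$, there is a single class, and $1=2^0$.

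For the class $\top$, suppose $P\not\subseteq L$. Then with $a=b=\eps$ we get $aPb\not\subseteq L$. Since $L$ is closed under nonempty contiguous factors, $aPb\subseteq L$ forces $P\subseteq L$, so $\sigma(P)=\emptyset$. Conversely, $\sigma(P)=\emptyset$ forces $P\not\subseteq L$. So these languages form exactly one class $\top$. It absorbs both operations:
\begin{itemize}
\item $P\cup R$ contains a word outside $L$;
\item $PR$ contains $pr$ with $p\notin L$, and then $pr\notin L$ by factor-closure.
\end{itemize}
For the statement about products, a product of $d+1$ nonempty languages contains only words of length at least $d+1$, none of which lie in $L$; hence the product is $\top$.

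Finally, for the order, $[P]\le[Q]$ means $[P\cup Q]=[Q]$, that is, $\sigma(P)\cap\sigma(Q)=\sigma(Q)$, or equivalently $\sigma(Q)\subseteq\sigma(P)$. This is exactly \eqref{eq:orderprofiles}.

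No step is a real obstacle. The points that need care are the finiteness count, handled through $P\cap L$ rather than the infinite profile set, and the fact that the universal acceptance condition makes $\sigma$ turn unions into intersections.
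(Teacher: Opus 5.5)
Your proof is correct and follows essentially the same route as the paper: you get the congruence from universal acceptance and the shifted contexts $(a,rb)$, the class $\top$ from factor-closure, the bound from the fact that every class other than $\top$ has a nonempty subset of $L$ as a representative, and the order from $[P\cup Q]=[Q]$. The only flaw is the wording of the counting step, since you cannot map a class to $P\cap L$ (different representatives are different sets); it is cleaner to argue, as the paper does, that every class other than $\top$ contains a nonempty subset of $L$, which uses your later observation that all $P\not\subseteq L$ lie in the single class $\top$.
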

\begin{proof}
Let $P\equiv_L Q$, and let $R$ be a nonempty finite language.
For any contexts $a,b$, the inclusion $a(P\cup R)b\subseteq L$
holds if and only if both $aPb\subseteq L$ and $aRb\subseteq L$.
It follows that $P\cup R\equiv_L Q\cup R$.
Similarly, $a(PR)b\subseteq L$ holds if and only if $aPwb\subseteq L$
for every $w\in R$. Applying $P\equiv_L Q$ to the contexts
$(a,wb)$ gives $PR\equiv_L QR$. The verification for left
multiplication is analogous. Hence $\equiv_L$ is a semiring congruence.

If $w\notin L$, no word $awb$ belongs to $L$, by factorial closure.
Consequently every language containing $w$ has an empty acceptance
profile, independently of which such $w$ it contains. These languages
form the class $\top$. Every other class has a representative that is a
nonempty subset of $L$, proving the bound
$(2^{|L|}-1)+1$. The assertions about generation and absorption follow
from the free-language construction and factorial closure. A product of
$d+1$ nonempty positive languages contains only words of length at least
$d+1$, so its class is $\top$. Finally,
$[P]\leq[Q]$ means $P\cup Q\equiv_L Q$, which is exactly
\eqref{eq:orderprofiles}.
\end{proof}

We now establish the matrix membership result that will be used in
the construction of the subvariety intervals.

\begin{thm}\label{thm:recognition}
Fix $n\geq1$, set $d=2n$, and let $\Alphabet$ be a finite alphabet with at
least $d$ letters. Let $\mathcal R$ be any set of multilinear words of
length $d$, and define
\begin{equation}\label{eq:languagegeneral}
 L=L_{d,\mathcal R}
 =\{w\in\Alphabet^+:w\text{ is multilinear and }|w|<d\}\ \cup\ \mathcal R.
\end{equation}
Then
\begin{equation}\label{eq:recognition}
 \Syn(L)\in\Fvar_{n+1}\cap\Uvar_{n+1}.
\end{equation}
In fact, $\Syn(L)$ is a homomorphic image of a subalgebra of a finite
direct power of either matrix algebra occurring in \eqref{eq:recognition}.
\end{thm}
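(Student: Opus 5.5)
The plan is to use the standard criterion for membership of a finitely generated quotient of the free ai-semiring in a variety. Let $F$ be the free ai-semiring on $\Alphabet$, that is, nonempty finite languages over $\Alphabet^+$, and let $A$ be either $\Mat_{n+1}(S)$ or $\UT_{n+1}(\overline S)$. Every assignment $\Alphabet\to A$ extends to a homomorphism $F\to A$. By Lemma~\ref{lem:syntactic}, $\Syn(L)=F/{\equiv_L}$ is finite, so it is enough to prove the following \emph{separation property}: whenever $P\not\equiv_L Q$, some assignment $\varphi$ satisfies $\varphi(P)\neq\varphi(Q)$. Given this, choose one separating assignment for each of the finitely many pairs of distinct classes, and let $\Phi\colon F\to A^k$ be the product map. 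Its kernel is contained in $\equiv_L$, so $\Syn(L)$ is a homomorphic image of the subalgebra $\Phi(F)$ of a finite direct power of $A$. This gives \eqref{eq:recognition} together with the stronger final assertion.

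For the separation property, suppose there are contexts $a,b$ with $aPb\subseteq L$ while some $w\in Q$ has $awb\notin L$; the symmetric case is identical. It suffices to find $\varphi$ and a coordinate at which $\varphi(awb)$ strictly exceeds every $\varphi(aub)$ with $u\in P$. Indeed, if $\varphi(P)=\varphi(Q)$, then multiplying by $\varphi(a),\varphi(b)$ (omitted when a context is empty) would give $\varphi(aPb)=\varphi(aQb)\geq\varphi(awb)$, which is impossible. The words of $aPb$ all lie in $L$, so each is multilinear and has length less than $d$ or lies in $\mathcal R$. Since $awb\notin L$, there are three cases.
\begin{enumerate}
\item The word $awb$ repeats some letter $x$. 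Use constant matrices as in Lemma~\ref{src:lem:embedding}: $x\mapsto C(M)$ with $M>d$ and all other letters $\mapsto C(1)$. In the triangular case use the matrix with all entries on and above the diagonal equal to the given constant and $-\infty$ below; this again multiplies constants additively at every upper entry. Every word of $aPb$ has value at most $M+d-1$, while $awb$ has value at least $2M$.
\item The word $awb$ has length greater than $d$. Send every letter to the constant $1$. The words of $aPb$ have value at most $d$ and $awb$ has value at least $d+1$.
\item The word $awb$ is multilinear of length exactly $d$ but is not in $\mathcal R$.
\end{enumerate}

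Case (iii) is the main step. Write $awb=y_1\cdots y_{2n}$ and imitate Lemma~\ref{lem:strict}. Give $y_{2i-1}$ a single weight-$1$ entry at $(i,i)$ and $y_{2i}$ a single weight-$1$ entry at $(i,i+1)$, with all other permitted entries $0$ and $-\infty$ below the diagonal in the triangular case. Send letters not occurring in $awb$ to the matrix with all (permitted) entries $0$. By Lemma~\ref{src:lem:paths}, the value at coordinate $(1,n+1)$ of any word $u$ is the maximum number of designated entries along a path, so it is at most $|u|$.
\begin{itemize}
\item The word $awb$ attains the value $2n$ at $(1,n+1)$.
\item A word of $aPb$ of length less than $d$ has value at most $d-1$ there.
\item A word of $aPb$ of length $d$ lies in $\mathcal R$, hence differs from $awb$. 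Reaching $2n$ would force it to use all designated entries in the order loop, advance, loop, \ldots, which by the argument of Lemma~\ref{lem:strict} spells exactly $y_1\cdots y_{2n}$. Multilinearity rules out reusing a letter to fake this order, so such a word has value at most $2n-1$.
\end{itemize}
This gives the strict gap. The delicate points are exactly these: checking that multilinearity excludes every competing path of full weight, and checking that the all-zero matrices for foreign letters never contribute extra weight. This holds because each step adds at most one unit, and only at a designated entry.
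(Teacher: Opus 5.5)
Your three-case analysis is correct and is essentially the paper's argument. You use the same trichotomy for a word $awb\notin L$ and the same loop--advance assignment from Lemma~\ref{lem:strict} in the multilinear case. Your weights $M$ versus $1$ in case~(i) are a harmless variant of the paper's $1$ versus $0$. You also reach the same contradiction from $\varphi(aPb)=\varphi(aQb)$ without inserting an identity element.

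The step that fails is the reduction to a finite power. Your separation property concerns pairs of \emph{languages} $P\not\equiv_L Q$. An assignment that separates one pair of representatives need not separate other representatives of the same two classes. So ``one separating assignment for each pair of distinct classes'' does not give $\ker\Phi\subseteq{\equiv_L}$. For instance, take $n=1$, $\mathcal R=\varnothing$ and distinct letters $y_1,y_2,c,z\in\Alphabet$:
\begin{enumerate}
\item The only context witnessing $\{c\}\not\equiv_L\{y_1y_2\}$ is the empty one, so your construction yields the loop--advance assignment for $y_1y_2$.
\item That assignment sends both $c$ and $zz$ to the matrix with all permitted entries $0$.
\item Yet $\{zz\}$ and $\{y_1y_2\}$ both lie in $\top$, while $\{c\}\notin\top$.
\end{enumerate}
As written, your argument only gives $\bigcap_\varphi\ker\varphi\subseteq{\equiv_L}$ over all assignments. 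That proves \eqref{eq:recognition} via a possibly infinite power, but not the finite-power assertion of the statement.

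The repair is already implicit in your cases. The assignment you build depends only on the offending word $awb$, not on $P$ or on the chosen representatives: on its repeated letter in case~(i), on nothing in case~(ii), and on the word itself in case~(iii). Since $\Alphabet$ is finite, all these assignments form one fixed finite family: one per letter, one length assignment, and one per multilinear word of length $d$ outside $\mathcal R$. Take $\Phi$ to be the product over this family. For any $P\not\equiv_L Q$, your argument shows that the member attached to a witness word $awb$ separates $P$ and $Q$, so $\ker\Phi\subseteq{\equiv_L}$. This is exactly how the paper proceeds, with its finite set $\mathscr E$ of assignments fixed in advance.
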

\begin{proof}
It is clear from the definition that $L$ is closed under nonempty
contiguous factors. Let $v\notin L$. We first construct a matrix
assignment for which the value of $v$ is strictly larger than the value
of every word of $L$ at a fixed coordinate. The construction applies
to both full and upper triangular matrices of size $n+1$. We consider
the following cases.

\smallskip\noindent\textit{Case 1.} The word $v$ repeats a letter $a$.
Assign every entry of the matrix for $a$ the numerical value $1$ and
every entry for any other letter the value $0$. In the triangular
version, this prescription applies only on and above the diagonal;
entries below it are $-\infty$. At every upper triangular coordinate,
a word has value equal to its number of occurrences of $a$.
Every word in $L$ has value at most $1$, whereas $v$ has value at least $2$.

\smallskip\noindent\textit{Case 2.} We have $|v|>d$.
Assign every letter the constant weight-$1$ matrix, or its upper
triangular version. At coordinate $(1,n+1)$ a word has value its length.
The values of words in $L$ are at most $d$, and the value of $v$ is larger.

\smallskip\noindent\textit{Case 3.} The word $v$ is multilinear and $|v|\leq d$.
Since $v\notin L$, necessarily $|v|=d$ and $v\notin\mathcal R$.
Write $v=v_1\cdots v_{2n}$. Assign the letters $v_{2i-1}$ and $v_{2i}$
the loop and advance matrices from Lemma~\ref{lem:strict}; assign any
other letter a weight-$0$ matrix. The value of $v$ at $(1,n+1)$ is $d$.
A word of $L$ could attain $d$ only if it had length $d$ and used only
weight-$1$ entries. Its multilinearity would then force it to use all
$2n$ designated letters exactly once. The unique possible path order
would force the word to be $v$, a contradiction. Every word in $L$
therefore has value at most $d-1$ at this coordinate.

The preceding cases give a finite set $\mathscr E$ of assignments:
one counting assignment for each letter, one length assignment, and
one loop--advance assignment for each forbidden multilinear word of
length $d$. Let $T$ denote either target matrix algebra and let
\[
 \Psi:\Pal_f^+(\Alphabet^+)\longrightarrow T^{\mathscr E}
\]
be the simultaneous evaluation homomorphism on the free ai-semiring.
Here $\Pal_f^+$ denotes nonempty finite subsets.

We claim that $\ker\Psi\subseteq\equiv_L$. Suppose
$\Psi(P)=\Psi(Q)$ but, for some contexts $a,b$,
$aPb\subseteq L$ and $aQb\not\subseteq L$. Choose
$v\in aQb\setminus L$. One of the assignments in $\mathscr E$ makes
$v$ strictly larger than every word of $L$ at the designated
coordinate. Therefore it makes the matrix value of $aQb$ strictly
larger there than the value of $aPb$, contradicting
$\Psi(aPb)=\Psi(aQb)$. The other direction is symmetric.
Empty contexts cause no difficulty: no matrix identity element needs
to be inserted.

Consequently, the map $\Psi(P)\mapsto[P]$ is a well-defined semiring
epimorphism from $\operatorname{im}\Psi$ onto $\Syn(L)$. Since
$\operatorname{im}\Psi$ is a subsemiring of $T^{\mathscr E}$ and
$\mathscr E$ is finite, the required assertions follow.
\end{proof}

\begin{remark}\label{rem:notflat}
In general, the ai-semiring $\Syn(L)$ is not flat. The preceding
membership proof uses its specified addition and does not apply to a
flat factor semiring in its place.
For the languages in \eqref{eq:languagegeneral}, $L$ is even closed under
nonempty scattered subwords. Consequently $\Syn(L)$ satisfies
$x\oplus y\leq xy$: acceptance of every word of $aPQb$ implies
acceptance of $aPb$ and $aQb$ by deleting the other factor.
This is consistent with the identity $x\oplus y\leq xy$ in every
$\Mat_k(S)$. Indeed,
$(AB)_{ij}\geq A_{ij}+B_{jj}\geq A_{ij}$ and
$(AB)_{ij}\geq A_{ii}+B_{ij}\geq B_{ij}$.
\end{remark}

\section{Prime-cycle languages and independent identities}
\label{sec:cycles}

We next apply Theorem~\ref{thm:recognition} to a family of languages
associated with directed cycles. The corresponding finite semirings
will be distinguished by an independent family of identities.

Fix $n\geq1$ and write $d=2n$. For an odd integer $p\geq3$, let
\[
 \Alphabet_{n,p}=\{a_1,\ldots,a_{d-2}\}\ \dot\cup\
 \{v_0,\ldots,v_{p-1}\}.
\]
The first set is empty when $n=1$. Throughout this construction,
indices on the $v_i$ are read modulo $p$. For each directed edge $v_i\to v_{i+1}$ of the cycle, consider the
ordered tuple
\[
 (a_1,\ldots,a_{d-2},v_i,v_{i+1}).
\]
Let $\mathcal R_{n,p}$ contain all even permutations of all these tuples,
viewed as words. Define
\begin{equation}\label{eq:Anp}
 L_{n,p}=L_{2n,\mathcal R_{n,p}},\qquad
 \Alg_{n,p}=\Syn(L_{n,p}).
\end{equation}
Each longest-word support contains all the $d-2$ anchors and exactly two
adjacent cycle vertices. On each such support, exactly one of the two
permutation parities is accepted. Since $p\geq3$, no reverse edge is a
second directed edge with the same support. In particular,
\[
 |\mathcal R_{n,p}|=p\frac{(2n)!}{2}.
\]
By Theorem~\ref{thm:recognition},
\begin{equation}\label{eq:Anpmembership}
 \Alg_{n,p}\in\Fvar_{n+1}\cap\Uvar_{n+1}.
\end{equation}

Use pairwise distinct formal variables
\[
 z_1,\ldots,z_{d-2},x_0,\ldots,x_{p-1},
\]
with the $x$-indices cyclic modulo $p$. Set
\begin{align}
 P_{n,p}
 &=\bigoplus_{i=0}^{p-1}
 E_d(z_1,\ldots,z_{d-2},x_i,x_{i+1}),\label{eq:Pnp}\\
 q_{n,p}&=z_1\cdots z_{d-2}x_1x_0,\label{eq:qnp}\\
 \sigma_{n,p}&:\quad q_{n,p}\leq P_{n,p}.\label{eq:sigmanp}
\end{align}
When $n=1$, \eqref{eq:qnp} is simply $x_1x_0$; no empty-word constant
is present.

\begin{lem}\label{lem:lower}
Both $\Fvar_n$ and $\Uvar_n$ satisfy $\sigma_{n,p}$ for every odd $p\geq3$.
\end{lem}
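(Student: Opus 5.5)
The plan is to derive $\sigma_{n,p}$ from the identity of Lemma~\ref{lem:AL}, using only the ai-semiring axioms on top of it. That lemma says $E_{2n}\approx O_{2n}$ holds in both $\Fvar_n$ and $\Uvar_n$, and here $d=2n$. The identity is valid for every choice of the $d$ variables, so I will substitute the ordered tuple
\[
(z_1,\ldots,z_{d-2},x_0,x_1)
\]
for $(z_1,\ldots,z_d)$. This gives, in both varieties,
\[
E_d(z_1,\ldots,z_{d-2},x_0,x_1)\approx O_d(z_1,\ldots,z_{d-2},x_0,x_1).
\]
Note that the left-hand side is precisely the $i=0$ summand of $P_{n,p}$ in \eqref{eq:Pnp}.

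Next I identify $q_{n,p}$ as a summand of the odd side. With respect to the tuple above, $q_{n,p}=z_1\cdots z_{d-2}x_1x_0$ is the word attached to the transposition of positions $d-1$ and $d$. This permutation is odd, so $q_{n,p}$ is one of the summands of $O_d(z_1,\ldots,z_{d-2},x_0,x_1)$. In any ai-semiring a summand lies below the whole sum, by idempotence, commutativity and associativity of $\oplus$. Hence
\[
q_{n,p}\leq O_d(z_1,\ldots,z_{d-2},x_0,x_1).
\]
For $n=1$ the anchor list is empty and the same remark applies to the word $x_1x_0$.

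Finally, I chain the inequalities. Since $E_d(z_1,\ldots,z_{d-2},x_0,x_1)$ is one of the summands of $P_{n,p}$, it lies below $P_{n,p}$, and therefore
\[
q_{n,p}\leq O_d(\ldots)\approx E_d(\ldots)\leq P_{n,p}.
\]
The order is compatible with the operations and is transitive, so this yields $q_{n,p}\oplus P_{n,p}\approx P_{n,p}$, which is $\sigma_{n,p}$. The derivation is an equational consequence of an identity valid in $\Fvar_n$ and in $\Uvar_n$, so it holds in both varieties.

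There is no real obstacle in this argument. The only point needing care is the bookkeeping of parity: one must read $q_{n,p}$ relative to the tuple ending $(x_0,x_1)$, with edge index $i=0$, so that it is an odd permutation word. Oddness here is what makes the identity of Lemma~\ref{lem:AL} applicable. Relative to the tuple ending $(x_1,x_2)$, by contrast, $q_{n,p}$ would not even have the right support.
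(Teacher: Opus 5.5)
Your proposal is correct and is essentially the paper's own proof: the paper uses the same chain $q_{n,p}\leq O_d(z_1,\ldots,z_{d-2},x_0,x_1)\approx E_d(z_1,\ldots,z_{d-2},x_0,x_1)\leq P_{n,p}$, with the middle step taken from Lemma~\ref{lem:AL}. Your additional bookkeeping, namely that $q_{n,p}$ is the odd transposition word for the tuple ending in $(x_0,x_1)$ and that the right-hand $E_d$ is the $i=0$ summand of $P_{n,p}$, is exactly what that chain uses.
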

\begin{proof}
The word $q_{n,p}$ is an odd permutation of
$(z_1,\ldots,z_{d-2},x_0,x_1)$. Therefore
\[
 q_{n,p}\leq O_d(z_1,\ldots,z_{d-2},x_0,x_1)
 \approx E_d(z_1,\ldots,z_{d-2},x_0,x_1)
 \leq P_{n,p},
\]
using Lemma~\ref{lem:AL} for the middle equality.
\end{proof}

\begin{lem}\label{lem:cyclemaps}
Let $p,q\geq3$ be odd. There is a map from the variables of $P_{n,p}$
to $\Alphabet_{n,q}$ sending every word of $P_{n,p}$ into $\mathcal R_{n,q}$
if and only if $q\mid p$.
Every such map sends $q_{n,p}$ outside $\mathcal R_{n,q}$.
\end{lem}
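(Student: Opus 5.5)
The plan is to pin down any such map $\varphi$ completely: first show it must send the $z$'s onto the anchors, then show it acts on the $x$'s as a directed cycle homomorphism, and finally read off both assertions.

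\emph{The $z$'s go to the anchors.} Let $\varphi$ send every word of $P_{n,p}$ into $\mathcal R_{n,q}$, and write $Z=\{z_1,\ldots,z_{d-2}\}$. Every word of $\mathcal R_{n,q}$ is multilinear of length $d$. Hence $\varphi$ is injective on each support $Z\cup\{x_i,x_{i+1}\}$, and the image of that support is $\{a_1,\ldots,a_{d-2},v_j,v_{j+1}\}$ for some $j$. Suppose $\varphi(Z)$ contains $k\geq1$ cycle vertices. Then it misses $k$ anchors, and these must lie in $\{\varphi(x_i),\varphi(x_{i+1})\}$ for every $i$, so $k\leq2$.
\begin{enumerate}
\item If $k=2$, each pair $\{\varphi(x_i),\varphi(x_{i+1})\}$ is the same two-element set $\{a,a'\}$. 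Around the cycle $x_0,\ldots,x_{p-1}$ the values would then alternate between $a$ and $a'$, which is impossible because $p$ is odd.
\item If $k=1$, with missing anchor $a$, then exactly one of $x_i,x_{i+1}$ is sent to $a$ for every $i$. The other is sent to a cycle vertex, and distinct vertices among the $x$'s cannot both hit $a$ within one support. This is again a proper $2$-colouring of the odd cycle $C_p$, which is impossible.
\end{enumerate}
Thus $\varphi(Z)$ is exactly the anchor set, and $\varphi$ induces a fixed permutation $\tau$ of the anchors. Each pair $\{\varphi(x_i),\varphi(x_{i+1})\}$ is an undirected edge $\{v_j,v_{j+1}\}$ of $C_q$.

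\emph{Parity forces a consistent orientation.} The word $z_1\cdots z_{d-2}x_ix_{i+1}$ occurs in $P_{n,p}$, since it is the identity permutation. Its image must be an even permutation of $(a_1,\ldots,a_{d-2},v_j,v_{j+1})$. The parity of this image is $\sgn(\tau)$ times the parity of the order of the last two letters. So if $\tau$ is even, every edge is traversed forwards, $\varphi(x_{i+1})=v_{j+1}$ whenever $\varphi(x_i)=v_j$. If $\tau$ is odd, every edge is traversed backwards. Since $q\geq3$, the support determines the edge uniquely, so this reading is unambiguous. Hence $\varphi(x_i)=v_{c+i}$ for all $i$, or $\varphi(x_i)=v_{c-i}$ for all $i$. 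Closing the cycle ($x_p=x_0$) requires $p\equiv0\pmod q$.

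\emph{Conversely, and the image of $q_{n,p}$.} If $q\mid p$, the map $z_k\mapsto a_k$, $x_i\mapsto v_{i\bmod q}$ works. Here $\tau$ is the identity, so an even permutation of $(z_1,\ldots,z_{d-2},x_i,x_{i+1})$ goes to the same even permutation of an edge tuple. For the last assertion, take any valid $\varphi$. The image of $q_{n,p}=z_1\cdots z_{d-2}x_1x_0$ has the support of the single edge determined by $\varphi(x_0),\varphi(x_1)$. Relative to that edge tuple, its parity is $\sgn(\tau)$ times the parity of the order of the last two letters. That order is reversed if $\tau$ is even and direct if $\tau$ is odd, so the product is odd in both cases. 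Hence the image of $q_{n,p}$ lies outside $\mathcal R_{n,q}$.

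I expect the main obstacle to be the first step, namely excluding cycle vertices from $\varphi(Z)$, since it is the one place where the oddness of $p$ enters through the colouring argument. The degenerate case $n=1$, where $Z=\emptyset$, skips that step entirely.
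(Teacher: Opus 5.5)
Your proposal is correct and follows essentially the same route as the paper's proof. The shared steps are: the anchor-deficit count ($r=k\le 2$, with both $r=1$ and $r=2$ excluded by alternation around the odd $p$-cycle), the sign of the induced anchor permutation forcing a uniform orientation $b_{i+1}=b_i\pm1$, closure after $p$ steps giving $q\mid p$, and the explicit converse map. The only cosmetic difference is in the last assertion, where the paper simply observes that $q_{n,p}$ differs by one transposition from the accepted word $z_1\cdots z_{d-2}x_0x_1$ on the same support, whereas you recompute the parity via $\sgn(\tau)$ and the orientation.
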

\begin{proof}
Suppose that a map $f$ with the stated property exists. Put
\[
 C=\{a_1,\ldots,a_{d-2}\},\qquad
 A=\{f(z_1),\ldots,f(z_{d-2})\},\qquad y_i=f(x_i).
\]
Every accepted word is multilinear. Thus the anchor images are distinct,
$y_i\notin A$, and $y_i\ne y_{i+1}$. Moreover, for every $i$,
\begin{equation}\label{eq:supportanchor}
 A\cup\{y_i,y_{i+1}\}=C\cup\{v_j,v_{j+1}\}
 \quad\text{for some }j.
\end{equation}
We first show that $A=C$. This also covers the case in which the
anchor set is empty.

Put $r=|C\setminus A|=|A\setminus C|$. Equation
\eqref{eq:supportanchor} gives $r\leq2$.
If $r=1$, let $c$ be the missing anchor. Every pair
$\{y_i,y_{i+1}\}$ must contain $c$ and one element different from $c$.
Thus the sequence alternates between $c$ and an element different from
$c$, which is impossible because $p$ is odd. If $r=2$, every such pair
consists of the two missing anchors. This again forces alternation
around an odd cycle, a contradiction. Hence $r=0$ and $A=C$.

The ordered anchor images are now a fixed permutation of the anchors.
Let its sign be $s\in\{1,-1\}$. If there are no anchors, take $s=1$.
Each $y_i$ is a cycle vertex, say $y_i=v_{b_i}$.
Acceptance of the ordered word
$f(z_1)\cdots f(z_{d-2})y_i y_{i+1}$ means precisely that the directed
edge is traversed forwards if $s=1$, and backwards if $s=-1$.
Indeed, reversing the last two positions changes permutation parity.
Consequently
\[
 b_{i+1}=b_i+s\pmod q\quad\text{for every }i.
\]
Returning to the starting vertex after $p$ steps forces $q\mid p$.

Conversely, when $q\mid p$, map $z_j\mapsto a_j$ and
$x_i\mapsto v_{i\bmod q}$. Every even edge permutation is accepted.
Finally, under any map in the statement, the variables in the first
edge tuple have distinct images. Its even permutations are accepted,
whereas the word $q_{n,p}$ reverses the last two positions and is odd
on the same support. Exactly one parity is accepted on this support,
so $f(q_{n,p})\notin\mathcal R_{n,q}$.
\end{proof}

\begin{thm}\label{thm:independent}
For $n\geq1$ and odd $p,q\geq3$,
\begin{equation}\label{eq:truthdiv}
 \Alg_{n,q}\models\sigma_{n,p}\quad\Longleftrightarrow\quad q\nmid p.
\end{equation}
In particular, for odd primes $p,q$,
\begin{equation}\label{eq:truthprime}
 \Alg_{n,q}\models\sigma_{n,p}\quad\Longleftrightarrow\quad p\ne q.
\end{equation}
\end{thm}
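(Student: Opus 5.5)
To decide whether $\Alg_{n,q}\models\sigma_{n,p}$, I will evaluate both sides of $\sigma_{n,p}$ in $\Syn(L)$ with $L=L_{n,q}$ through the order characterization \eqref{eq:orderprofiles}. An assignment of the variables in $\Syn(L)$ is given by nonempty finite languages $X_0,\dots,X_{p-1},Z_1,\dots,Z_{d-2}$. The term $P_{n,p}$ then evaluates to the class of the union $W_P$ of all words obtained by substituting into the words of $P_{n,p}$, and similarly $q_{n,p}$ evaluates to the class of a language $W_q$. By \eqref{eq:orderprofiles}, the inequality $q_{n,p}\leq P_{n,p}$ fails exactly when there are an assignment and contexts $a,b\in\Alphabet_{n,q}^*$ with
\[
 aW_Pb\subseteq L
 \quad\text{and}\quad
 aW_qb\not\subseteq L .
\]
So the task reduces to understanding these context-language configurations.

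For the direction $q\mid p\Rightarrow$ failure, I would take the map $f$ of Lemma~\ref{lem:cyclemaps}, assign each variable the singleton language of its image, and use empty contexts. Every word of $P_{n,p}$ is sent into $\mathcal R_{n,q}\subseteq L$, while $f(q_{n,p})$ has length $d$ and lies outside $\mathcal R_{n,q}$, hence outside $L$. This shows $\Alg_{n,q}\not\models\sigma_{n,p}$.

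For the converse, I assume $aW_Pb\subseteq L$ and aim to show $aW_qb\subseteq L$. Since every word of $P_{n,p}$ has length $d$, each word of $aW_Pb$ has length at least $d$, and so length exactly $d$ and lies in $\mathcal R_{n,q}$.

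1. Length forces $a=b=\eps$ and forces every variable language to consist of single letters.
2. Choose one letter from each variable language, i.e.\ a map $g$ from the variables to letters. Every word of $P_{n,p}$ then maps into $\mathcal R_{n,q}$, because the words of $W_P$ include all such choices.
3. By Lemma~\ref{lem:cyclemaps}, such a $g$ exists only if $q\mid p$.

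Hence, when $q\nmid p$, the hypothesis $aW_Pb\subseteq L$ can never hold, so the implication is vacuous and $\sigma_{n,p}$ is valid. I should double-check the edge case in which a variable language contains several letters: $W_P$ ranges over all choices, so picking any one choice is legitimate, and nonemptiness of the languages guarantees a choice exists. No case with $|a|+|b|>0$ survives, since the words would then be too long.

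For \eqref{eq:truthprime}: for odd primes $p,q$, the condition $q\mid p$ is equivalent to $q=p$. The main point to verify carefully is the reduction through \eqref{eq:orderprofiles}, including the fact that words of $P_{n,p}$ cannot collapse into shorter words. This holds because substitution of nonempty languages can only lengthen words and no empty word is ever substituted.
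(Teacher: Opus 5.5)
Your proposal is correct and follows the paper's proof essentially step for step. It uses the reduction via \eqref{eq:orderprofiles}, the length argument forcing empty contexts and single-letter assignments, and Lemma~\ref{lem:cyclemaps} in both directions; your one deviation is harmless, namely skipping the paper's choice of letters matching a failing word of $q_{n,p}$ and observing instead that $P_{n,p}$ always evaluates to $\top$ in $\Alg_{n,q}$ when $q\nmid p$, since the paper never uses that matching anyway.
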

\begin{proof}
Suppose first that $q_{n,p}\leq P_{n,p}$ fails in $\Syn(L_{n,q})$.
Choose an assignment witnessing this failure, and represent the value
of each variable by a nonempty finite language of positive words.
Let $\phi$ denote the resulting substitution. By \eqref{eq:orderprofiles}, there exist contexts $a,b$
with
\begin{equation}\label{eq:acceptancefailure}
 a\phi(P_{n,p})b\subseteq L_{n,q},\qquad
 a\phi(q_{n,p})b\not\subseteq L_{n,q}.
\end{equation}
Every word of $P_{n,p}$ has length $d$, and every substitution word has
positive length. Since the target language has maximum length $d$,
the first condition forces $a=b=\eps$.
Every variable occurs in $P_{n,p}$. If one of its substitution words
had length at least two, a substituted word of $P_{n,p}$ would have
length greater than $d$. Thus every substitution language consists of
single letters.

Choose one letter for each variable, making the choice for the
variables of $q_{n,p}$ agree with a word witnessing the second condition
in \eqref{eq:acceptancefailure}; extend the choice arbitrarily to the
remaining variables. This gives a map $f$ sending every word of
$P_{n,p}$ into $\mathcal R_{n,q}$. Lemma~\ref{lem:cyclemaps} yields
$q\mid p$. Consequently, if $q\nmid p$, the identity is satisfied
under every assignment in the algebra.

If $q\mid p$, use the map in the converse part of
Lemma~\ref{lem:cyclemaps}, and assign the corresponding letter classes.
The image of $P_{n,p}$ is a subset of $L_{n,q}$, while the image of
$q_{n,p}$ is not. The empty context distinguishes the two sides, so
the identity fails. This proves \eqref{eq:truthdiv} and its prime
specialization.
\end{proof}

\begin{example}
For $n=1$ there are no anchors. The longest accepted words in
$L_{1,p}$ are $v_i v_{i+1}$, and all single letters are accepted.
The identity is
\[
 x_1x_0\leq x_0x_1\oplus x_1x_2\oplus\cdots\oplus x_{p-1}x_0.
\]
It holds in the scalar max-plus algebras by commutativity, and its
failure in $\Alg_{1,q}$ is equivalent to a closed directed walk of
length $p$ in a directed $q$-cycle.
For higher dimensions, the $2n-2$ anchors and the even permutations
ensure that the witnesses lie in the appropriate adjacent interval.
The construction for the first interval alone does not give this
conclusion.
\end{example}

\section{The intervals between matrix varieties}\label{sec:continuum}

We are now in a position to prove that every interval between distinct
matrix dimensions has cardinality $2^{\aleph_0}$. We first construct
explicit order embeddings into the adjacent intervals.

\begin{thm}\label{thm:intervals}
For $n\geq1$ and $J\subseteq\PP$, define
\begin{align}
 \mathcal X^F_{n,J}
 &=\Fvar_n\vee\bigvee_{p\in J}\V(\Alg_{n,p}),\label{eq:joinF}\\
 \mathcal X^U_{n,J}
 &=\Uvar_n\vee\bigvee_{p\in J}\V(\Alg_{n,p}).\label{eq:joinU}
\end{align}
For the empty set these are, respectively, $\Fvar_n$ and $\Uvar_n$.
The two maps $J\mapsto\mathcal X^F_{n,J}$ and
$J\mapsto\mathcal X^U_{n,J}$ are join-preserving order embeddings into
$[\Fvar_n,\Fvar_{n+1}]$ and $[\Uvar_n,\Uvar_{n+1}]$.
\end{thm}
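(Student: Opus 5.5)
We need three things: the map lands in the interval, it preserves joins, and it is an order embedding.

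\emph{Membership in the interval.} Clearly $\Fvar_n\subseteq\mathcal X^F_{n,J}$. For the upper bound, \eqref{eq:Anpmembership} gives $\Alg_{n,p}\in\Fvar_{n+1}\cap\Uvar_{n+1}$ for every $p\in J$. Lemma~\ref{lem:embedding} gives $\Fvar_n\subseteq\Fvar_{n+1}$, and \eqref{eq:utembedding} gives $\Uvar_n\subseteq\Uvar_{n+1}$. Since the join of varieties is the least variety containing them, $\mathcal X^F_{n,J}\subseteq\Fvar_{n+1}$ and $\mathcal X^U_{n,J}\subseteq\Uvar_{n+1}$. For the empty set the big join is empty, so the value is $\Fvar_n$ (respectively $\Uvar_n$), as stated.

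\emph{Joins and monotonicity.} For $J,K\subseteq\PP$, associativity and idempotence of joins in the lattice of varieties give
\[
\mathcal X_{n,J}\vee\mathcal X_{n,K}=\mathcal X_{n,J\cup K}.
\]
This also holds for arbitrary unions. Order preservation follows immediately, since $J\subseteq K$ implies $J\cup K=K$.

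\emph{Order reflection.} This is the main step. Suppose $J\not\subseteq K$, and pick $q\in J\setminus K$. I will show $\mathcal X_{n,J}\not\subseteq\mathcal X_{n,K}$ using the identity $\sigma_{n,q}$.
\begin{itemize}
\item $\sigma_{n,q}$ holds in $\Fvar_n$ and in $\Uvar_n$ by Lemma~\ref{lem:lower}.
\item $\sigma_{n,q}$ holds in $\Alg_{n,p}$ for every $p\in K$ by \eqref{eq:truthprime}, since $p\neq q$.
\end{itemize}
A variety generated by a family satisfies every identity common to all members, because the join's equational theory is the intersection of the theories. Hence $\mathcal X_{n,K}\models\sigma_{n,q}$. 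On the other hand, $\Alg_{n,q}\in\mathcal X_{n,J}$, and \eqref{eq:truthprime} with $p=q$ shows $\Alg_{n,q}\not\models\sigma_{n,q}$. So $\mathcal X_{n,J}\not\subseteq\mathcal X_{n,K}$.

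Combining the three parts, $J\subseteq K$ if and only if $\mathcal X_{n,J}\subseteq\mathcal X_{n,K}$. The map is therefore injective, an order embedding, and join-preserving. The full and triangular cases run identically, citing the respective halves of Lemma~\ref{lem:lower} and \eqref{eq:Anpmembership}. The only subtle point is the infinite join in the reflection step: it is handled by the fact that an identity holding in every generator holds in the generated variety, by Birkhoff's theorem. No finiteness of $K$ is needed.
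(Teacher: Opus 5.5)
Your proposal is correct and follows essentially the same route as the paper. Membership comes from \eqref{eq:Anpmembership} and the dimension embeddings, join preservation from the defining formulas, and order reflection from separating $\mathcal X_{n,J}$ and $\mathcal X_{n,K}$ by $\sigma_{n,q}$ for some $q\in J\setminus K$, using Lemma~\ref{lem:lower}, Theorem~\ref{thm:independent}, and the fact that an identity holds in a join exactly when it holds in every joinand. The paper merely packages the last step as the equivalence $\mathcal X^\bullet_{n,J}\models\sigma_{n,p}\Leftrightarrow p\notin J$.
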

\begin{proof}
By \eqref{eq:Anpmembership} and the dimension embeddings, the varieties
in \eqref{eq:joinF} and \eqref{eq:joinU} belong to their respective
intervals. Lemma~\ref{lem:lower} and
Theorem~\ref{thm:independent} give, for $\bullet\in\{F,U\}$,
\begin{equation}\label{eq:jointruth}
 \mathcal X^\bullet_{n,J}\models\sigma_{n,p}
 \quad\Longleftrightarrow\quad p\notin J.
\end{equation}
Indeed, an identity holds in a join exactly when it holds in all the
joining varieties.

If $J\subseteq K$, it follows immediately from the definitions that
$\mathcal X^\bullet_{n,J}\subseteq\mathcal X^\bullet_{n,K}$.
Conversely, suppose that $J\nsubseteq K$, and choose $p\in J\setminus K$.
By \eqref{eq:jointruth}, the variety $\mathcal X^\bullet_{n,K}$ satisfies
$\sigma_{n,p}$, whereas $\mathcal X^\bullet_{n,J}$ does not. Hence
$\mathcal X^\bullet_{n,J}\nsubseteq\mathcal X^\bullet_{n,K}$.
Thus both maps preserve and reflect inclusion. Their defining formulas
also show that arbitrary unions of index sets correspond to joins in
the respective intervals, whose least elements are the stated lower
endpoints. This completes the proof.
\end{proof}

\begin{cor}\label{cor:cardinality}
For all $1\leq n<m$,
\[
 |[\Fvar_n,\Fvar_m]|=|[\Uvar_n,\Uvar_m]|=2^{\aleph_0}.
\]
Each of these intervals contains a chain and an antichain of that
cardinality.
\end{cor}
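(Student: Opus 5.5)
We deduce the corollary from Theorem~\ref{thm:intervals} together with a counting bound. Fix $1\le n<m$ and write $\mathcal X,\mathcal Y$ for either pair $\Fvar_n,\Fvar_m$ or $\Uvar_n,\Uvar_m$.

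For the lower bound, note that the dimension embeddings (Lemma~\ref{lem:embedding} and \eqref{eq:utembedding}) give $\Fvar_{n+1}\subseteq\Fvar_m$ and $\Uvar_{n+1}\subseteq\Uvar_m$. Hence the adjacent interval $[\mathcal X,\mathcal X_{n+1}]$ is contained in $[\mathcal X,\mathcal Y]$. By Theorem~\ref{thm:intervals}, the map $J\mapsto\mathcal X^\bullet_{n,J}$ is an order embedding of $\Pal(\PP)$ into the adjacent interval. It is injective, because it reflects inclusion. Since $\PP$ is countably infinite, this gives at least $2^{\aleph_0}$ distinct varieties in $[\mathcal X,\mathcal Y]$.

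For the upper bound, observe that the signature is finite (two binary operations) and the variables are countable. So there are only countably many identities, and hence at most $2^{\aleph_0}$ equational theories. By Birkhoff's theorem every variety is determined by its set of identities, so there are at most $2^{\aleph_0}$ varieties of ai-semirings in total. Combining the two bounds gives equality.

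For the chain and the antichain, we transport suitable families in $\Pal(\PP)$ along the order embedding. This is legitimate because an order embedding maps chains to chains, and it also maps antichains to antichains since it reflects the order.
\begin{itemize}
\item[\emph{Chain.}] Enumerate $\PP$ bijectively by $\Q$ through some $g:\Q\to\PP$. For $r\in\R$ put $J_r=g(\{s\in\Q:s<r\})$. For $r<r'$ some rational lies in $[r,r')$, so $J_r\subsetneq J_{r'}$. This is a chain of cardinality $2^{\aleph_0}$.
\item[\emph{Antichain.}] Use a standard almost-disjoint family: identify $\PP$ with the finite binary strings and, for each infinite branch $b\in\{0,1\}^{\N}$, take the set $J_b$ of its initial segments. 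Distinct branches share only finitely many initial segments while each $J_b$ is infinite, so the sets $J_b$ are pairwise incomparable.
\end{itemize}
Alternatively, one can pair the primes and choose one element from each pair; this gives $2^{\aleph_0}$ sets of the form $\{\text{one of each pair}\}$, pairwise incomparable.

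There is no real obstacle in this argument. The only point needing care is the upper bound: it holds because there is no constant or infinitary structure, so the identities are countable.
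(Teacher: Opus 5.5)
Your proof is correct and follows the paper's argument. The lower bound comes from the order embedding of $\Pal(\PP)$ into the adjacent interval, the upper bound from counting equational theories over a finite signature, and the chain (Dedekind cuts of $\Q$) and antichain are transported through the embedding; your main antichain uses an almost-disjoint family of branches, while the paper uses your alternative of choosing one element from each pair, and both are valid.
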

\begin{proof}
The countably infinite set of odd primes has $2^{\aleph_0}$ subsets,
so Theorem~\ref{thm:intervals} supplies this lower bound already in the
adjacent interval. There are only countably many identities in the
fixed finite signature over countably many variables, hence at most
$2^{\aleph_0}$ equational theories. This is the matching upper bound.
An interval with $m>n+1$ contains the adjacent one.

For completeness, the power set of a countably infinite set contains
a continuum-sized chain: identify the set with $\Q$ and use the
sets $\{q\in\Q:q<t\}$ for $t\in\R$. It also contains a
continuum-sized antichain: partition the set into pairs and choose
exactly one member of each pair. Transport these families through
either order embedding.
\end{proof}

We do not claim that these order embeddings preserve meets in the
ambient subvariety lattice. Nor do we claim that every variety in either
interval is one of the displayed joins. The independent identities
suffice for the asserted cardinalities.

\section{Finite commutative semirings from divisor languages}
\label{sec:algebra}

In this section, we construct the finite commutative ai-semirings needed
for the nonfinite basis proofs. Their operations are induced by a
syntactic congruence on finite sets of commutative monomials. We first
establish the congruence construction, then prove four auxiliary lemmas,
and finally verify the bounded-variable property under arbitrary
assignments.

\subsection{A syntactic congruence for commutative monomials}

Let $\mathsf M_m$ be the free commutative monoid on the auxiliary letters
$X_1,\ldots,X_m$. Its elements are monomials
$u=X_1^{a_1}\cdots X_m^{a_m}$, with $a_j\in\N$, including the monoid
identity $1$. Write $u\mid v$ for ordinary monomial divisibility and
$\deg u=\sum_j a_j$. Let $\mathscr P_m$ be the ai-semiring of nonempty
finite subsets of $\mathsf M_m$, with
\[
 P\oplus Q=P\cup Q,\qquad PQ=\{uv:u\in P,\ v\in Q\}.
\]
It has commutative multiplication and an actual identity $\{1\}$.
This auxiliary identity is an element, not a new symbol in the
signature of the identities under consideration.

Let $\Delta\subseteq\mathsf M_m$ be finite, nonempty, and closed under
divisors. For $P\in\mathscr P_m$, define its \emph{residual set} by
\begin{equation}\label{eq:residual}
 \rho_\Delta(P)=\{c\in\mathsf M_m:cP\subseteq\Delta\}.
\end{equation}
The acceptance condition is universal over the monomials of $P$.
Define
\begin{equation}\label{eq:moncongruence}
 P\sim_\Delta Q\quad\Longleftrightarrow\quad
 \rho_\Delta(P)=\rho_\Delta(Q),\qquad
 C(\Delta)=\mathscr P_m/{\sim_\Delta}.
\end{equation}

\begin{lem}\label{lem:moncongruence}
The relation $\sim_\Delta$ is a semiring congruence. The algebra
$C(\Delta)$ is a finite commutative ai-semiring with multiplicative
identity $[\{1\}]$, and
\[
 |C(\Delta)|\leq 2^{|\Delta|}.
\]
Every polynomial containing a monomial outside $\Delta$ has the same
class $\top$, which absorbs both operations. For arbitrary $P,Q$,
\begin{equation}\label{eq:monorder}
 [P]\leq[Q]\quad\Longleftrightarrow\quad
 \rho_\Delta(Q)\subseteq\rho_\Delta(P).
\end{equation}
\end{lem}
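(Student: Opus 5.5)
The plan follows the proof of Lemma~\ref{lem:syntactic}, with residual sets in place of two-sided acceptance profiles. Commutativity of $\mathsf M_m$ means a single multiplier $c$ replaces the pair of contexts $a,b$. Here $c=1$ is allowed, because $1$ is a genuine element of $\mathsf M_m$.

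\textbf{Congruence.} For any $P,R\in\mathscr P_m$ and any $c\in\mathsf M_m$, we have $c(P\cup R)\subseteq\Delta$ if and only if $cP\subseteq\Delta$ and $cR\subseteq\Delta$. Hence
\[
 \rho_\Delta(P\cup R)=\rho_\Delta(P)\cap\rho_\Delta(R),
\]
and so $P\sim_\Delta Q$ implies $P\cup R\sim_\Delta Q\cup R$. For multiplication, $cPR\subseteq\Delta$ holds if and only if $cwP\subseteq\Delta$ for every $w\in R$. Therefore
\[
 \rho_\Delta(PR)=\{c:\ cw\in\rho_\Delta(P)\text{ for all }w\in R\},
\]
which depends on $P$ only through $\rho_\Delta(P)$. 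This gives $PR\sim_\Delta QR$. Since multiplication is commutative, one side suffices. The quotient $C(\Delta)$ then inherits the ai-semiring axioms, commutativity, and the identity $[\{1\}]$ from $\mathscr P_m$.

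\textbf{Finiteness and $\top$.} Because $\Delta$ is closed under divisors, $cP\subseteq\Delta$ forces $c\mid cu\in\Delta$ for any $u\in P$, so $c\in\Delta$. Thus $\rho_\Delta(P)\subseteq\Delta$, and the map $[P]\mapsto\rho_\Delta(P)$ injects $C(\Delta)$ into the power set of $\Delta$. This gives $|C(\Delta)|\leq2^{|\Delta|}$.

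If $P$ contains some $u\notin\Delta$, then $cu\notin\Delta$ for every $c$, again by divisor closure. So $\rho_\Delta(P)=\emptyset$, and all such $P$ form one class $\top$. Absorption follows from the two formulas above:
\begin{itemize}
 \item $\rho_\Delta(P\cup R)\subseteq\rho_\Delta(P)=\emptyset$;
 \item if $P$ contains $u\notin\Delta$, then $PR$ contains $uw\notin\Delta$ for any $w\in R$, because $u\mid uw$ and $\Delta$ is divisor-closed.
\end{itemize}
The second point is the only place where divisor closure is used beyond finiteness, so it is the step to check carefully. Nonemptiness of $R$ is needed there, and it is guaranteed since $\mathscr P_m$ consists of nonempty sets.

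\textbf{Order.} By definition, $[P]\leq[Q]$ means $[P\cup Q]=[Q]$. By the intersection formula this says $\rho_\Delta(P)\cap\rho_\Delta(Q)=\rho_\Delta(Q)$, which is exactly $\rho_\Delta(Q)\subseteq\rho_\Delta(P)$. This is \eqref{eq:monorder}.

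I expect no genuine obstacle. The only care needed is to note that $c=1$ plays the role of the empty context, which is harmless here because $1$ is an element of the auxiliary monoid and not a term constant.
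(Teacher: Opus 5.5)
Your proposal is correct and follows the paper's proof essentially step for step: the intersection formula for residuals of unions, the pointwise description of residuals of products together with commutativity, divisor closure for the class $\top$, and the order characterization via $[P\cup Q]=[Q]$. The only cosmetic difference is the size bound: you inject $[P]\mapsto\rho_\Delta(P)$ into the power set of $\Delta$ using $\rho_\Delta(P)\subseteq\Delta$, whereas the paper counts the nonempty representatives contained in $\Delta$ and adds one for $\top$, and both arguments are equally valid.
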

\begin{proof}
Since
$\rho_\Delta(P\cup Q)=\rho_\Delta(P)\cap\rho_\Delta(Q)$, addition
preserves equivalence. For any nonempty finite monomial set $R$,
\[
 cPR\subseteq\Delta
 \quad\Longleftrightarrow\quad
 crP\subseteq\Delta\text{ for every }r\in R.
\]
Thus $P\sim_\Delta Q$ implies $PR\sim_\Delta QR$; commutativity
gives the other side. The quotient therefore inherits every semiring
law and its multiplicative identity from $\mathscr P_m$.

If $u\notin\Delta$, divisor closure implies $cu\notin\Delta$ for
all $c\in\mathsf M_m$. Consequently a polynomial containing $u$ has
empty residual set. Such polynomials form one class $\top$, absorbing
addition and multiplication. Every other class has a nonempty
representative contained in $\Delta$. There are at most
$2^{|\Delta|}-1$ such representatives and one additional class.
Finally, $[P]\leq[Q]$ means $[P\cup Q]=[Q]$, which gives
\eqref{eq:monorder}. Notice also that every residual set is contained
in $\Delta$: if $cP\subseteq\Delta$, then $c$ divides any monomial
in the nonempty set $cP$.
\end{proof}

\subsection{The cyclic divisor languages}

Fix integers $k\geq1$ and $m\geq3$, and use cyclic indices modulo $m$.
In $\mathsf M_m$, put
\begin{equation}\label{eq:cyclicmonomials}
 \mu=(X_1\cdots X_m)^k,\qquad
 \nu_i=X_i^{k+1}X_{i^+}^{k-1}
       \prod_{j\notin\{i,i^+\}}X_j^k\quad(1\leq i\leq m).
\end{equation}
Thus $\nu_i=\mu X_i/X_{i^+}$ in the group of Laurent monomials;
\eqref{eq:cyclicmonomials} shows that all exponents in the monoid
itself are nonnegative. Define
\begin{equation}\label{eq:deltaalgebra}
 \Delta_{k,m}=\bigcup_{i=1}^m\{u:u\mid\nu_i\},\qquad
 \Gamma_{k,m}=\Delta_{k,m}\cup\{\mu\},\qquad
 C_{k,m}=C(\Delta_{k,m}).
\end{equation}
The definition uses only ordinary monomial divisibility. In the
following proofs, we suppress the subscripts on $\Delta$ and $\Gamma$
and put $D=km$.

The set $\Delta$ is divisor-closed, but $\mu\notin\Delta$, since
$\nu_i$ has exponent $k-1$ at $X_{i^+}$. Every proper divisor of
$\mu$ belongs to $\Delta$: if its exponent at $X_j$ is less than
$k$, it divides $\nu_{j^-}$. Consequently
\begin{equation}\label{eq:deltacardinality}
 |\Delta_{k,m}|=(k+1)^m-1+mk(k+1)^{m-2}.
\end{equation}
Indeed, the first term counts the proper divisors of $\mu$.
A remaining divisor has exactly one exponent equal to $k+1$;
choose its position $i$, its successor exponent in $\{0,\ldots,k-1\}$,
and the other exponents in $\{0,\ldots,k\}$.
The only monomials of degree $D$ in $\Gamma$ are
\begin{equation}\label{eq:degreed}
 \mu,\nu_1,\ldots,\nu_m.
\end{equation}

For $\mathbf a=(a_1,\ldots,a_m)\in\N^m$ and
$u=X_1^{b_1}\cdots X_m^{b_m}$, set
\[
 \operatorname{wt}_{\mathbf a}(u)=\sum_j b_ja_j,
 \qquad
 \operatorname{wt}_{\mathbf a}(P)=
 \max_{u\in P}\operatorname{wt}_{\mathbf a}(u).
\]
These weights are evaluations in $S$. In particular, the auxiliary
monomial $1$ has weight $0$, which is the multiplicative identity of $S$.
Write $Q_{k,m}=\{\nu_1,\ldots,\nu_m\}$ when it is viewed as a
polynomial in $\mathscr P_m$.

\subsection{Auxiliary lemmas}

The following four lemmas provide the algebraic properties used in
the bounded-variable argument.

\begin{lem}
\label{lem:monomialbound}
For any monomial $u\in\mathsf M_m$,
\begin{equation}\label{eq:monomialbound}
 \operatorname{wt}_{\mathbf a}(u)
 \leq\operatorname{wt}_{\mathbf a}(Q_{k,m})
 \quad\text{for all }\mathbf a\in\N^m
 \quad\Longleftrightarrow\quad u\in\Gamma_{k,m}.
\end{equation}
\end{lem}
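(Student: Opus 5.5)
The plan is to prove the two implications separately. The direction ``$\Leftarrow$'' is elementary, and the direction ``$\Rightarrow$'' uses linear programming duality.

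\emph{Membership implies the inequality.} Suppose first that $u\in\Delta_{k,m}$, so $u\mid\nu_i$ for some $i$. Since every $a_j\ge0$, weights are monotone under divisibility. Hence $\operatorname{wt}_{\mathbf a}(u)\le\operatorname{wt}_{\mathbf a}(\nu_i)\le\operatorname{wt}_{\mathbf a}(Q_{k,m})$. The only remaining element of $\Gamma_{k,m}$ is $u=\mu$. Here I would reuse the argument of Lemma~\ref{src:lem:scalarvalid}. We have $\operatorname{wt}_{\mathbf a}(\nu_i)=\operatorname{wt}_{\mathbf a}(\mu)+a_i-a_{i^+}$, and some cyclic index satisfies $a_i\ge a_{i^+}$. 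So the maximum over $Q_{k,m}$ is at least $\operatorname{wt}_{\mathbf a}(\mu)$.

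\emph{The inequality implies membership.} Let $u=X^{\mathbf b}$ satisfy \eqref{eq:monomialbound} for all $\mathbf a\in\N^m$. Both sides are homogeneous of degree one in $\mathbf a$, so the inequality holds for all $\mathbf a\in\Q_{\ge0}^m$. Both sides are also continuous (maxima of linear forms), so it holds for all $\mathbf a\in\R_{\ge0}^m$.

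Now consider the linear program of minimizing $t-\mathbf b\cdot\mathbf a$ subject to $t\ge\boldsymbol\nu_i\cdot\mathbf a$ for all $i$ and $\mathbf a\ge0$. Here $\boldsymbol\nu_i$ denotes the exponent vector of $\nu_i$. The hypothesis says this program is bounded, with optimum $0$. By Farkas' lemma (equivalently, LP duality), there are weights $\lambda_i\ge0$ with $\sum_i\lambda_i=1$ and $\mathbf b\le\sum_i\lambda_i\boldsymbol\nu_i$ componentwise. Writing out coordinates gives
\[
 b_j\le k+\lambda_j-\lambda_{j^-}\qquad(1\le j\le m),
\]
where $j^-$ is the cyclic predecessor. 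If one prefers to avoid duality, the same separation could be carried out by hand by exhibiting a test vector $\mathbf a$ for each failing $\mathbf b$. However, the duality route seems cleanest, and I would try it first.

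\emph{Integrality finishes the argument.} Since $0\le\lambda_j\le1$, each $b_j\le k+1$. The two cases are as follows.
\begin{enumerate}
\item If $b_j\le k$ for all $j$, then $u\mid\mu$. So either $u=\mu$ or $u$ is a proper divisor of $\mu$, which lies in $\Delta_{k,m}$ as noted after \eqref{eq:deltaalgebra}. In both cases $u\in\Gamma_{k,m}$.
\item Otherwise some $b_j=k+1$. This forces $\lambda_j-\lambda_{j^-}\ge1$, hence $\lambda_j=1$ and all other weights vanish. Then $\mathbf b\le\boldsymbol\nu_j$, so $u\mid\nu_j$ and $u\in\Delta_{k,m}$.
\end{enumerate}

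\emph{Main obstacle.} The step I expect to need the most care is the passage from the integer test vectors to the convex-combination certificate. This requires the homogeneity and continuity extension to real $\mathbf a$, followed by a correct application of Farkas' lemma with the sign constraint $\mathbf a\ge0$; that constraint is what produces the componentwise inequality $\le$ rather than an equality. The rest is bookkeeping with the cyclic exponents.
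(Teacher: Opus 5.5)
Your proof is correct. The reverse direction is the same as the paper's, but the forward direction takes a genuinely different route.

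The paper never leaves integer weights. Setting $a_j=1$ with all other weights $0$ gives $b_j\le k+1$. If $b_i=k+1$, then for each $j\neq i$ it takes $a_i=3$, $a_j=1$ and all other weights $0$. Under this weight $\nu_i$ is the strict maximizer among the $\nu_h$, since it gains at least $3$ in coordinate $i$ and loses at most $2$ in coordinate $j$. This forces $b_j$ to be at most the $X_j$-exponent of $\nu_i$, coordinate by coordinate. That is exactly the ``by hand'' separation with explicit test vectors that you mention as an alternative to duality.

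Your argument instead establishes a general fact. For any finite set $Q$ of monomials, $u$ is dominated by $Q$ under all nonnegative weights iff the exponent vector of $u$ lies in $\operatorname{conv}(Q)-\R^m_{\geq0}$. You then reduce the lemma to the integrality check $b_j\le k+\lambda_j-\lambda_{j^-}$. Your duality step is sound: the LP is bounded with optimum $0$, and the free variable $t$ yields $\sum_i\lambda_i=1$. Your case analysis is also sound.

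What your route buys is a uniform certificate, valid for any $Q$, from which the case split falls out cleanly. The cost is invoking Farkas' lemma. Passing to real weights is harmless but unnecessary, since the data are rational and Farkas' lemma over $\Q$ already suffices after homogeneity; so the ``main obstacle'' you flag is not a real one.

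The paper's version uses only finitely many explicit integer evaluations and no external theorem. This is deliberate: the paper states that its countermodel construction avoids a geometric characterization of scalar identities, and your convex-hull certificate is essentially a special case of such a characterization.
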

\begin{proof}
If $u\mid\nu_i$, the inequality follows from nonnegativity of the
weights. For $u=\mu$, note that
\[
 \operatorname{wt}_{\mathbf a}(\nu_i)
 =\operatorname{wt}_{\mathbf a}(\mu)+a_i-a_{i^+}.
\]
At least one cyclic difference is nonnegative, proving the inequality.

Conversely, write $u=\prod_jX_j^{b_j}$ and assume the inequality.
Assign $a_j=1$ and every other weight zero. This gives $b_j\leq k+1$
for each $j$. If every $b_j\leq k$, then $u\mid\mu$, and the
remarks preceding \eqref{eq:deltacardinality} give $u\in\Gamma$.
Otherwise fix $i$ with $b_i=k+1$. For each $j\ne i$, assign
\[
 a_i=3,\qquad a_j=1,\qquad a_h=0\quad(h\notin\{i,j\}).
\]
Among the $\nu_h$, the monomial $\nu_i$ has strictly largest weight:
it gains at least $3$ in coordinate $i$ relative to any other
$\nu_h$, while a loss in coordinate $j$ is at most $2$.
The assumed inequality therefore gives
\[
 3(k+1)+b_j\leq 3(k+1)+\operatorname{exp}_{X_j}(\nu_i).
\]
Thus $b_j\leq\operatorname{exp}_{X_j}(\nu_i)$ for every $j\ne i$,
and equality already holds at $i$. Hence $u\mid\nu_i$.
\end{proof}

\begin{lem}\label{lem:compulsory}
Let $B\subseteq\Delta_{k,m}$ be nonempty. If
\[
 \operatorname{wt}_{\mathbf a}(\mu)
 \leq\operatorname{wt}_{\mathbf a}(B)
 \quad\text{for all }\mathbf a\in\N^m,
\]
then $\nu_i\in B$ for every $i$.
\end{lem}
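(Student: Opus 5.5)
Fix an index $i$. The plan is to pick one integer weight vector $\mathbf a$ for which $\nu_i$ is the only monomial in $\Delta$ whose weight reaches $\operatorname{wt}_{\mathbf a}(\mu)$. The hypothesis then forces some $u\in B$ with $\operatorname{wt}_{\mathbf a}(u)\geq\operatorname{wt}_{\mathbf a}(\mu)$, and uniqueness gives $u=\nu_i$.

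The natural candidate puts weight on $X_i$ and none on $X_{i^+}$, making $\nu_i$ the favoured cyclic shift. Concretely, I would take
\[
 a_i=2,\qquad a_{i^+}=0,\qquad a_h=1\quad(h\notin\{i,i^+\}).
\]
Write $u=\prod_jX_j^{b_j}$ with $u\in\Delta$, so $u\mid\nu_h$ for some $h$. Then
\[
 \operatorname{wt}_{\mathbf a}(u)\leq\operatorname{wt}_{\mathbf a}(\nu_h)
 =\operatorname{wt}_{\mathbf a}(\mu)+a_h-a_{h^+}.
\]
Since $m\geq3$, there are three cases for $h$:
\begin{enumerate}
 \item $h=i$: here $a_h-a_{h^+}=2$.
 \item $h=i^+$: here $a_h-a_{h^+}=0-1=-1$, because $(i^+)^+\neq i$.
 \item $h=i^-$: here $a_{i^-}-a_i=1-2=-1$. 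This needs $i^-\neq i^+$, which again uses $m\geq3$.
\end{enumerate}
For every other $h$ the difference is $1-1=0$. So only divisors of $\nu_i$ can possibly reach $\operatorname{wt}_{\mathbf a}(\mu)$, which confines any $u$ reaching it to divisors of $\nu_i$.

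The harder step is showing that no \emph{proper} divisor of $\nu_i$ reaches $\operatorname{wt}_{\mathbf a}(\mu)$, and also closing the case $h\neq i$. One problem is that $X_{i^+}$ has weight $0$, so dropping a factor $X_{i^+}$ costs nothing. Another is that in the last case the bound only gives $\operatorname{wt}\leq\operatorname{wt}(\mu)$, not strict inequality.

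I would fix this by making all weights positive, taking
\[
 a_i=N+1,\qquad a_{i^+}=N-1,\qquad a_h=N\quad\text{otherwise},
\]
with $N$ large. Now $\operatorname{wt}_{\mathbf a}(\nu_h)-\operatorname{wt}_{\mathbf a}(\mu)=a_h-a_{h^+}$ equals $2$ for $h=i$, equals $-1$ for $h=i^\pm$, and equals $0$ for all remaining $h$. Every proper divisor of any $\nu_h$ loses at least $N-1$ from $\operatorname{wt}_{\mathbf a}(\nu_h)$, which is at most $\operatorname{wt}_{\mathbf a}(\mu)+2-(N-1)<\operatorname{wt}_{\mathbf a}(\mu)$ once $N\geq4$.

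The remaining obstacle is the $\nu_h$ with $h\notin\{i,i^\pm\}$, which tie with $\mu$ exactly. To break the tie I would perturb by a small cyclic gradient: put $a_j=N+\varepsilon_j$ with $\varepsilon_i-\varepsilon_{i^+}$ the unique positive cyclic difference. Since the cyclic differences $\varepsilon_h-\varepsilon_{h^+}$ sum to zero, one can choose
\[
 \varepsilon_j=\text{cyclic distance from } i^+ \text{ to } j \text{ going forward},
\]
so that $\varepsilon_{i^+}=0,\ \varepsilon_{i^{++}}=1,\ \ldots,\ \varepsilon_i=m-1$. Then $\varepsilon_h-\varepsilon_{h^+}=-1$ for $h\neq i$ and $\varepsilon_i-\varepsilon_{i^+}=m-1$. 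Hence every $\nu_h$ with $h\neq i$ is strictly below $\mu$, and $\nu_i$ is strictly above it. Taking $N>m$ handles the proper divisors as before, and the three cases combine to conclude $\nu_i\in B$.
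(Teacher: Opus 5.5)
Your final argument is correct and is essentially the paper's proof. The paper uses the same weight vector $a_j=T+b_j$ with the cyclic gradient $b_{i^+}=0,\ldots,b_i=m-1$, so that each $\nu_h$ with $h\ne i$ lies exactly one below $\operatorname{wt}_{\mathbf a}(\mu)$, and a large constant pushes all lower-degree monomials strictly below $\operatorname{wt}_{\mathbf a}(\mu)$.

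The only difference is in how the lower-degree monomials are bounded:
\begin{itemize}
\item The paper uses the crude bound $(D-1)(T+m-1)<DT$, which is why it takes $T=D(m-1)+1$.
\item You note that a proper divisor of $\nu_h$ loses at least the minimal weight $N$, which already suffices for $N>m$.
\end{itemize}
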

\begin{proof}
Fix $i$ and suppose $\nu_i\notin B$. Choose integers $b_j$ by
\[
 b_{i+1}=0,\ b_{i+2}=1,\ \ldots,\ b_i=m-1
\]
with cyclic subscripts. Put $T=D(m-1)+1$ and $a_j=T+b_j$.
For $j\ne i$ one has $b_j-b_{j^+}=-1$, so
\[
 \operatorname{wt}_{\mathbf a}(\nu_j)
 =\operatorname{wt}_{\mathbf a}(\mu)-1.
\]
For any monomial $u$ of degree at most $D-1$,
\[
 \operatorname{wt}_{\mathbf a}(u)
 \leq(D-1)(T+m-1)<DT
 \leq\operatorname{wt}_{\mathbf a}(\mu).
\]
By \eqref{eq:degreed}, these two estimates cover every monomial of
$B$: neither $\mu$ nor $\nu_i$ belongs to $B$.
They contradict the assumed inequality at this explicit integer
assignment. Thus every $\nu_i$ must belong to $B$.
\end{proof}

\begin{lem}\label{lem:productrigidity}
If $u,v\in\Gamma_{k,m}$ and $uv=\mu^2$ as ordinary monomials,
then $u=v=\mu$.
\end{lem}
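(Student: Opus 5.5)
Write $u=\prod_j X_j^{b_j}$ and $v=\prod_j X_j^{c_j}$. The equation $uv=\mu^2$ means $b_j+c_j=2k$ for every $j$. The plan is to compare exponents coordinatewise, using only what membership in $\Gamma_{k,m}$ says about each exponent vector. Every element of $\Gamma_{k,m}$ either divides $\mu$ (all exponents at most $k$) or divides some $\nu_i$. The remarks preceding \eqref{eq:deltacardinality} show that a divisor of $\nu_i$ not dividing $\mu$ has exponent exactly $k+1$ at $X_i$ and at most $k-1$ at $X_{i^+}$. In every case, every exponent of an element of $\Gamma$ is at most $k+1$, and an exponent $k+1$ can occur at only one position $i$, with the successor exponent at most $k-1$.

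The main step is a degree-and-deficit count. The total degree of $uv$ is $2D$, while by \eqref{eq:degreed} every element of $\Gamma$ has degree at most $D$. Hence $\deg u=\deg v=D$, so $u,v\in\{\mu,\nu_1,\ldots,\nu_m\}$. It remains to exclude a $\nu_i$. Note first that $\nu_i\nu_h\neq\mu^2$ for all $i,h$: $\nu_i$ has exponent $k+1$ at $X_i$, so $v$ would need exponent $k-1$ at $X_i$. Among the $\nu_h$, only $\nu_{i^-}$ has exponent $k-1$ at $X_i$, and $\mu$ has exponent $k$ there.

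So suppose $u=\nu_i$. Then $v=\nu_{i^-}$ is forced. But $\nu_{i^-}$ has exponent $k+1$ at $X_{i^-}$, while $\nu_i$ has exponent $k$ at $X_{i^-}$ when $m\ge3$, because $i^-\notin\{i,i^+\}$. The sum at $X_{i^-}$ is then $2k+1\neq 2k$, a contradiction. The same argument applies with $u$ and $v$ interchanged, and the case where exactly one factor is $\mu$ fails because $\nu_i\neq\mu$. Therefore $u=v=\mu$.

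The only delicate point is using $m\ge3$ to guarantee $i^-\ne i^+$. With $m=2$ one would have $\nu_1\nu_2=\mu^2$, so this hypothesis is genuinely needed.
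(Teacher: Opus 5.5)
Your proof is correct and follows essentially the paper's route. The degree bound forces $u,v\in\{\mu,\nu_1,\ldots,\nu_m\}$, cancellation disposes of a factor equal to $\mu$, and $m\geq3$ excludes $\nu_i\nu_h=\mu^2$. The only difference is in that last step: the paper cancels to $X_iX_h=X_{i^+}X_{h^+}$ and uses a cyclic-shift invariance argument, whereas you compare the exponents at $X_i$ and $X_{i^-}$ directly, which amounts to the same thing.
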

\begin{proof}
Every element of $\Gamma$ has degree at most $D$, so both factors
have degree $D$. Apply \eqref{eq:degreed}. A factor equal to $\mu$
forces the other to equal $\mu$ by cancellation. The only remaining
possibility would be $\nu_i\nu_j=\mu^2$. Cancelling $\mu^2$ in
the group of Laurent monomials gives
\[
 X_iX_j=X_{i^+}X_{j^+}.
\]
The multiplicity vector of the two indices on the left would be
unchanged by cyclic shift, so all its $m$ entries would be equal.
Their sum is $2$, impossible for integers when $m\geq3$.
\end{proof}

\begin{lem}\label{lem:abelianrank}
Suppose monomials $c,u_1,\ldots,u_\ell\in\mathsf M_m$ have the
property that every $\nu_i$ is of the form
\[
 \nu_i=c\prod_{j=1}^\ell u_j^{a_{ij}},\qquad a_{ij}\in\N.
\]
Then $\ell\geq m$.
\end{lem}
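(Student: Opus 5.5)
Pass to the free abelian group $\Z^m$ of Laurent monomials, written additively through exponent vectors, and reduce the statement to a rank count. Write $\mathbf e(w)\in\Z^m$ for the exponent vector of a monomial $w$. The hypothesis then reads
\[
 \mathbf e(\nu_i)=\mathbf e(c)+\sum_{j=1}^{\ell}a_{ij}\,\mathbf e(u_j)
 \qquad(1\leq i\leq m).
\]
Subtracting the equation for $\nu_m$ from that for $\nu_i$ removes the unknown monomial $c$. Each difference $\mathbf e(\nu_i)-\mathbf e(\nu_m)$ therefore lies in the subgroup $U$ generated by $\mathbf e(u_1),\ldots,\mathbf e(u_\ell)$. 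The plan is to show that these differences, together with one further vector in $U$, span a subgroup of rank $m$. Since $U$ has at most $\ell$ generators, its rank is at most $\ell$, which would give $\ell\geq m$.

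By \eqref{eq:cyclicmonomials}, $\mathbf e(\nu_i)=\mathbf e(\mu)+\mathbf f_i-\mathbf f_{i^+}$, where $\mathbf f_j$ denotes the standard basis vectors. So the differences are $(\mathbf f_i-\mathbf f_{i^+})-(\mathbf f_m-\mathbf f_1)$ for $1\leq i\leq m-1$. I would check that these $m-1$ vectors are linearly independent over $\Q$, and that they span the same space as the cyclic differences $\mathbf f_i-\mathbf f_{i^+}$. That space is the sum-zero hyperplane, of dimension $m-1$.

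The sum of the cyclic differences is zero. Hence the span of $\{\mathbf g_i-\mathbf g_m\}$, with $\mathbf g_i=\mathbf f_i-\mathbf f_{i^+}$, contains $\sum_i(\mathbf g_i-\mathbf g_m)=-m\mathbf g_m$. It therefore contains $\mathbf g_m$ rationally, and so every $\mathbf g_i$. Consequently $U\otimes\Q$ contains the entire sum-zero hyperplane, of dimension $m-1$.

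This alone only gives $\ell\geq m-1$. The remaining step is to find one more vector of $U$ with nonzero coordinate sum, and this uses degrees. Every $\nu_i$ has degree $D=km$, and the degree is a homomorphism $\Z^m\to\Z$ given by the coordinate sum. Summing all $m$ equations gives
\[
 mD=m\deg c+\sum_{j}\Bigl(\sum_ia_{ij}\Bigr)\deg u_j.
\]
I would argue that some $u_j$ with positive degree has $a_{ij}>0$ for some $i$. If not, then every $\nu_i$ would equal $c$ times a product of degree-zero monomials, that is, times $1$. All $\nu_i$ would then equal $c$, contradicting the fact that the $\nu_i$ are distinct when $m\geq3$. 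That $u_j$ lies in $U$ and has nonzero coordinate sum, so it lies outside the hyperplane. Hence $\dim_\Q(U\otimes\Q)=m$, and rank at most $\ell$ forces $\ell\geq m$.

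I expect this degree step to be the main subtlety. The differencing argument is insensitive to $c$, so it alone gives only $m-1$. Nonnegativity of the exponents in the monoid, and not merely in the group, is exactly what supplies the extra dimension. I would also check carefully that $u_j=1$ is allowed but contributes nothing to the rank.
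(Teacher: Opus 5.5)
Your proof is correct. It reaches the same rank count as the paper by a somewhat different decomposition.

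The paper keeps $c$ and works with the quotients $w_i=\nu_i/c$. Since $c$ divides $\gcd(\nu_1,\ldots,\nu_m)=(X_1\cdots X_m)^{k-1}$, all $w_i$ are monomials of one common positive degree. A relation $\prod_i w_i^{b_i}=1$ therefore forces $\sum_i b_i=0$. After cancelling $\mu$ and $c$, the cyclic structure gives $b_j=b_{j^-}$ for all $j$. Thus $w_1,\ldots,w_m$ are $m$ independent elements of the subgroup generated by the $u_j$.

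You instead eliminate $c$ by differencing and obtain the sum-zero hyperplane from the cyclic differences $\mathbf f_i-\mathbf f_{i^+}$. You then supply the last dimension by a generator $u_j$ of positive degree. Its existence follows from the distinctness of the $\nu_i$ and the fact that $1$ is the only monomial of degree $0$.

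The monoid hypothesis enters at exactly one point in each proof: $\deg c<D$ in the paper, and the existence of some $u_j\neq1$ in yours. Your remark that this hypothesis is essential is right. With Laurent monomials, $c=\nu_m$ and $u_i=\nu_i/\nu_m$ for $i<m$ would give $\ell=m-1$ with all $a_{ij}\in\N$.

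The paper's route exhibits an explicit independent family built from the $\nu_i$ themselves. Yours avoids the gcd computation and shows more transparently where nonnegativity is used.

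One small simplification: you do not need $a_{ij}>0$ for the chosen $u_j$. Every $\mathbf e(u_j)$ already lies in $U$, so any $u_j\neq1$ suffices.
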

\begin{proof}
The common factor $c$ divides every $\nu_i$, so it divides
$\gcd(\nu_1,\ldots,\nu_m)=(X_1\cdots X_m)^{k-1}$.
Consequently all $w_i=\nu_i/c$ are monomials of the same positive
degree $D-\deg c\geq m$.

We claim that $w_1,\ldots,w_m$ are independent in the free abelian
group of Laurent monomials. Indeed, suppose that
$\prod_iw_i^{b_i}=1$ for integers $b_i$. Since all the $w_i$ have the
same positive degree, comparison of degrees gives $\sum_i b_i=0$.
Cancelling the common factors $\mu$ and $c$, we obtain
$\prod_i(X_i/X_{i^+})^{b_i}=1$. Comparison of the exponents of $X_j$
now gives $b_j=b_{j^-}$ for every $j$. The integers $b_i$ are therefore
equal, and their sum is zero. Hence every $b_i$ is zero, proving the
claim.

All $w_i$ belong to the subgroup generated by $u_1,\ldots,u_\ell$.
A subgroup generated by $\ell$ elements has rank at most $\ell$;
equivalently, tensor with $\Q$ and count vector-space generators.
It follows that $\ell\geq m$, as required.
\end{proof}

\subsection{The bounded-variable property}

We now establish the main result of this section. The assertion concerns
all assignments in the finite algebra, not only assignments to its
generators.

\begin{thm}\label{thm:algebraiclocality}
For every $k\geq1$ and $m\geq3$, the finite commutative ai-semiring
$C_{k,m}$ satisfies
\begin{equation}\label{eq:countermodeltheorem}
 \begin{gathered}
 C_{k,m}\models\Id_{m-1}(S),\qquad
 C_{k,m}\not\models\eta_{k,m},\\[3pt]
 \eta_{k,m}:\quad (x_1\cdots x_m)^k\leq
 \bigoplus_{i=1}^m x_i^{k+1}x_{i^+}^{k-1}
 \prod_{j\notin\{i,i^+\}}x_j^k.
 \end{gathered}
\end{equation}
The products on the right can be put in any fixed order. Zero
exponents mean omission; every monomial has positive length $km$.
The locality assertion includes assignments to $\top$, to the
multiplicative identity, and to arbitrary sums and products.
\end{thm}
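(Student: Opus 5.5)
The plan has two parts. The failure of $\eta_{k,m}$ is a direct computation on generators. The locality assertion reduces, after shrinking the assignment to single monomials, to Lemmas~\ref{lem:monomialbound}, \ref{lem:compulsory} and \ref{lem:abelianrank}.

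\emph{Failure of $\eta_{k,m}$.} Assign $x_i\mapsto[\{X_i\}]$. The left side evaluates to $[\{\mu\}]$, which is $\top$ because $\mu\notin\Delta$. The right side evaluates to $[Q_{k,m}]$, whose residual set contains $1$, since every $\nu_i\in\Delta$. By \eqref{eq:monorder}, $\top\leq[Q_{k,m}]$ would require $\rho_\Delta(Q_{k,m})\subseteq\rho_\Delta(\{\mu\})=\emptyset$. This is false, so $\eta_{k,m}$ fails.

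\emph{Locality: reduction to an inequality.} An identity $u\approx v$ in an ai-semiring is equivalent to the pair $u\leq v$, $v\leq u$. So it suffices to take an inequality $u\leq v$ in at most $m-1$ variables that holds in $S$, together with an arbitrary assignment $x\mapsto[P_x]$ with $P_x\in\mathscr P_m$ arbitrary. This covers $\top$, $[\{1\}]$, sums and products. Write $\phi$ for the induced substitution into $\mathscr P_m$.

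Suppose $[\phi(u)]\not\leq[\phi(v)]$. By \eqref{eq:monorder} there is a monomial $c$ with $c\phi(v)\subseteq\Delta$ and some $w\in c\phi(u)\setminus\Delta$. Write $w=c\,t$, where $t$ is a monomial of $u$ with each variable $x$ replaced by some $s_x\in P_x$.

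\emph{Shrinking to singletons.} Replace every $P_x$ by the singleton $\{s_x\}$, choosing $s_x$ arbitrarily in $P_x$ for variables not occurring in $t$. Then $w$ still lies in $c\phi'(u)$. Since substitution is monotone under inclusion, $c\phi'(v)\subseteq c\phi(v)\subseteq\Delta$. So we may assume every $P_x=\{s_x\}$.

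\emph{Identifying $w$.} For every $\mathbf a\in\N^m$, the map $\operatorname{wt}_{\mathbf a}$ is a homomorphism $\mathscr P_m\to S$. Hence the $S$-identity $u\leq v$, evaluated at $x\mapsto\operatorname{wt}_{\mathbf a}(s_x)$ and shifted by $\operatorname{wt}_{\mathbf a}(c)$, gives
\[
\operatorname{wt}_{\mathbf a}(w)\leq\operatorname{wt}_{\mathbf a}(B)\quad\text{for all }\mathbf a,\qquad\text{where } B=c\phi'(v)\subseteq\Delta.
\]
Every element of $\Delta$ divides some $\nu_i$, so $\operatorname{wt}_{\mathbf a}(B)\leq\operatorname{wt}_{\mathbf a}(Q_{k,m})$. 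Lemma~\ref{lem:monomialbound} then gives $w\in\Gamma$. Since $w\notin\Delta$, we get $w=\mu$.

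\emph{Contradiction.} Lemma~\ref{lem:compulsory}, applied to $B$, shows that every $\nu_i$ lies in $B$. Each element of $B$ has the form $c\prod_x s_x^{e_x}$ with $e_x\in\N$, and the $s_x$ range over at most $m-1$ fixed monomials. Lemma~\ref{lem:abelianrank}, with $\ell\leq m-1$, is then contradicted.

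The main obstacle is the singleton reduction. Without it, different $\nu_i$ could arise from different elements of the sets $P_x$, and Lemma~\ref{lem:abelianrank}, which needs fixed generators $u_1,\ldots,u_\ell$, would not apply. Monotonicity of substitution under inclusion is exactly what makes the reduction legitimate. I expect Lemma~\ref{lem:productrigidity} not to be needed along this route.
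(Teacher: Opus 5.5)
Your computation showing that $\eta_{k,m}$ fails, your reduction to inequalities, and your final use of Lemmas~\ref{lem:compulsory} and \ref{lem:abelianrank} are all fine. The singleton reduction, however, has a genuine gap.

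You write $w=c\,t$ with ``each variable $x$ replaced by some $s_x\in P_x$''. This assumes that all occurrences of $x$ in the chosen monomial of $u$ select the same element of $P_x$. Nothing guarantees that. In $\mathscr P_m$ each occurrence selects independently; for instance, a factor $x^2$ with $P_x=\{a,b\}$ contributes $ab$. If your witness $w$ uses $a$ at one occurrence of $x$ and $b\neq a$ at another, then $w$ need not lie in $c\phi'(u)$ for any choice of singletons, and monotonicity of substitution does not help. You could instead shrink $P_x$ to the set of all selected elements, which keeps $w$. But then the monomials playing the role of $u_1,\ldots,u_\ell$ in Lemma~\ref{lem:abelianrank} are no longer bounded in number by $m-1$, so the contradiction is lost.

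This is exactly where Lemma~\ref{lem:productrigidity} is needed, contrary to your expectation. The repair is to reorder the steps.

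\emph{Step 1 (before shrinking).} Use that $\operatorname{wt}_{\mathbf a}$ is a homomorphism on all of $\mathscr P_m$. For every $w'\in c\phi(u)$ this gives $\operatorname{wt}_{\mathbf a}(w')\leq\operatorname{wt}_{\mathbf a}(c\phi(v))\leq\operatorname{wt}_{\mathbf a}(Q_{k,m})$. Lemma~\ref{lem:monomialbound} then yields $c\phi(u)\subseteq\Gamma$, so $w=\mu$.

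\emph{Step 2 (consistency of selections).} Suppose two occurrences of $x$ in the witnessing monomial select $a$ and $b$. Changing the first selection to $b$ gives $\mu b/a$, and changing the second to $a$ gives $\mu a/b$. Both are genuine monomials of $c\phi(u)\subseteq\Gamma$, and their product is $\mu^2$. Lemma~\ref{lem:productrigidity} forces both to equal $\mu$, so $a=b$.

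\emph{Step 3.} Only once this consistency holds for every variable is your singleton reduction legitimate. The rest of your argument, through Lemmas~\ref{lem:compulsory} and \ref{lem:abelianrank}, then goes through as in the paper.
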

\begin{proof}
Let $s\approx t$ be valid in $S$ and involve $\ell\leq m-1$
variables $y_1,\ldots,y_\ell$. Represent an arbitrary assignment in
$C_{k,m}$ by nonempty finite monomial sets $P_1,\ldots,P_\ell$.
After expansion, write $s(P)$ and $t(P)$ for the resulting elements
of $\mathscr P_m$.

To prove that the assigned values are equal, it suffices to show that
$s(P)$ and $t(P)$ have the same residual set. Suppose, for a
contradiction, that they do not. Interchanging $s$ and $t$ if necessary,
there exists a monomial $c\in\mathsf M_m$ such that
\begin{equation}\label{eq:localfailure}
 cs(P)\subseteq\Delta,
 \qquad ct(P)\not\subseteq\Delta.
\end{equation}
Every monomial of $cs(P)$ is bounded by $Q_{k,m}$ under every
nonnegative integer assignment, by Lemma~\ref{lem:monomialbound}.
The scalar identity $s\approx t$, with the given polynomial
substitution and multiplication by $c$, gives the same bound for
$ct(P)$. Applying that lemma separately to its monomials yields
\[
 ct(P)\subseteq\Gamma=\Delta\cup\{\mu\}.
\]
Thus the second condition in \eqref{eq:localfailure} says precisely
that $\mu$ occurs in $ct(P)$.

Choose a monomial of the expanded term $t$ and, for each occurrence
of a variable in that monomial, a monomial from its substitution set,
so that the resulting product with $c$ is $\mu$.
Suppose two occurrences of the same $y_j$ choose $u,v\in P_j$.
Replacing the first choice by the second, and conversely replacing
the second by the first, produces two monomials
\[
 \mu\,v/u\quad\text{and}\quad\mu\,u/v
\]
that both occur in $ct(P)$. They are genuine monomials: the indicated
cancellation removes an actual selected factor. Their product is
$\mu^2$, and both belong to $\Gamma$. By
Lemma~\ref{lem:productrigidity}, both equal $\mu$, and cancellation
gives $u=v$.

Consequently all occurrences of any one variable in the selected
monomial choose a single monomial $u_j\in P_j$. For a variable
absent from that monomial, choose any $u_j\in P_j$. Restrict the
substitution to these single monomials and put
\[
 B=cs(u_1,\ldots,u_\ell).
\]
This is a nonempty subset of $cs(P)$ and hence of $\Delta$.
The selected monomial of $ct(u_1,\ldots,u_\ell)$ remains $\mu$.
Since the restricted substitution still preserves the scalar identity,
\[
 \operatorname{wt}_{\mathbf a}(\mu)
 \leq\operatorname{wt}_{\mathbf a}(B)
 \quad(\mathbf a\in\N^m).
\]
Lemma~\ref{lem:compulsory} forces every $\nu_i$ to occur in $B$.
Each has the form $c\prod_{j=1}^\ell u_j^{a_{ij}}$, because $B$
comes from the term $s$ in $\ell$ variables. Lemma~\ref{lem:abelianrank}
now gives $\ell\geq m$, contradicting $\ell\leq m-1$.
This contradiction excludes \eqref{eq:localfailure}. Hence $s(P)$ and
$t(P)$ have equal residual sets. Since the assignment was arbitrary,
$C_{k,m}$ satisfies $s\approx t$.

It remains to show that $C_{k,m}$ does not satisfy $\eta_{k,m}$.
Assign to $x_j$ the class $[\{X_j\}]$.
The left side of $\eta_{k,m}$ becomes $[\{\mu\}]=\top$, since
$\mu\notin\Delta$. Each $\nu_i$ has degree $D$ and belongs to
$\Delta$, whose monomials all have degree at most $D$.
Therefore
\begin{equation}\label{eq:residualseparation}
 \rho_\Delta(\{\mu\})=\varnothing,\qquad
 \rho_\Delta(\{\nu_i\})=\{1\}\quad(1\leq i\leq m).
\end{equation}
All the right-hand monomials consequently have the same class
$\omega\ne\top$, whose residual set is $\{1\}$.
However, $\top\oplus\omega=\top\ne\omega$. Thus $\eta_{k,m}$ fails
under this assignment, completing the proof.
\end{proof}

\begin{cor}\label{cor:algebraicobstruction}
For any $r\geq1$ and $k\geq1$, put $m=\max\{3,r+1\}$.
The finite algebra $C_{k,m}$ satisfies every $r$-variable identity
of $S$ and fails $\eta_{k,m}$. Consequently those identities do not
imply $\eta_{k,m}$, even with all elementary commutative ai-semiring
laws available as background axioms.
\end{cor}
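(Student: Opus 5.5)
The corollary follows from Theorem~\ref{thm:algebraiclocality} once we observe that $r$-variable identities are among the $(m-1)$-variable ones, together with a soundness argument for equational logic. Put $m=\max\{3,r+1\}$. Then $m\geq3$ and $r\leq m-1$, so the theorem applies to $C_{k,m}$. Every identity of $S$ in at most $r$ distinct variables involves at most $m-1$ variables, so $\Id_r(S)\subseteq\Id_{m-1}(S)$. The first clause of the theorem then gives $C_{k,m}\models\Id_r(S)$. The second clause gives $C_{k,m}\not\models\eta_{k,m}$ directly, through the assignment $x_j\mapsto[\{X_j\}]$.

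For the non-implication, we use soundness of equational logic in Birkhoff's form. If a set $\Sigma$ of identities implies an identity $\eta$, then $\eta$ holds in every algebra of the signature that satisfies $\Sigma$. Take
\[
\Sigma=\Id_r(S)\cup\mathsf{Ax},
\]
where $\mathsf{Ax}$ consists of the ai-semiring axioms together with commutativity $xy\approx yx$ of multiplication. By Lemma~\ref{lem:moncongruence}, $C_{k,m}$ is a commutative ai-semiring, so it satisfies $\mathsf{Ax}$. It also satisfies $\Id_r(S)$ by the first step. Hence $C_{k,m}$ is a model of $\Sigma$ in which $\eta_{k,m}$ fails, and therefore $\Sigma\not\models\eta_{k,m}$.

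Two points need care. First, $C_{k,m}$ has a multiplicative identity $[\{1\}]$, but it is not named in the signature; so $C_{k,m}$ is a genuine algebra of the two-binary-operation signature, and no constant axioms enter $\Sigma$. Second, $\eta_{k,m}$ is a valid identity of $S$. Under a scalar assignment $x_j=a_j$, the $i$-th monomial on the right has value $\operatorname{wt}_{\mathbf a}(\mu)+a_i-a_{i^+}$. As in Lemma~\ref{src:lem:scalarvalid}, some cyclic difference $a_i-a_{i^+}$ is nonnegative, so the right side is at least the left. This shows that $\eta_{k,m}$ is a genuine consequence of $\Id(S)$ that the $r$-variable identities fail to capture. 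It is not strictly needed for the non-implication, but it is what makes the statement meaningful for axiomatic rank.

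No step is a real obstacle, since all the substantive work is in Theorem~\ref{thm:algebraiclocality}. The only care required is the bookkeeping: $m\geq3$ is needed to invoke the theorem (Lemma~\ref{lem:productrigidity} uses it), and $r\leq m-1$ is needed for the identity inclusion; the choice $m=\max\{3,r+1\}$ secures both. The theorem's locality statement also covers arbitrary assignments, including $\top$ and non-generator elements, so satisfaction of $\Id_r(S)$ is full satisfaction in $C_{k,m}$ and not merely satisfaction on generators.
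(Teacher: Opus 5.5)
Your proposal is correct and follows the paper's own proof: apply Theorem~\ref{thm:algebraiclocality} using $m\geq3$ and $r\leq m-1$, then invoke soundness of equational logic with the commutative ai-semiring $C_{k,m}$ as the countermodel. Your added check that $\eta_{k,m}$ holds in $S$ is, as you note, not needed for the non-implication itself.
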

\begin{proof}
Apply Theorem~\ref{thm:algebraiclocality}, noting that $r\leq m-1$, and use
soundness of equational logic. The countermodel is itself a
commutative ai-semiring.
\end{proof}

\begin{remark}
The construction above does not use their locality theorem or a
geometric characterization of scalar identities. For every $k$, the
semiring laws follow first from the congruence in
\eqref{eq:moncongruence}; the bounded-variable property is then proved
separately. Deleting a monomial from an arbitrary closure construction
would not, without further proof, preserve these laws.
\end{remark}

\begin{example}
For $k=1,m=3$,
\[
 \mu=X_1X_2X_3,\quad
 \nu_1=X_1^2X_3,\quad
 \nu_2=X_1X_2^2,\quad
 \nu_3=X_2X_3^2.
\]
The divisor language $\Delta_{1,3}$ has $13$ monomials, including
$1$, but not $X_1X_2X_3$. 
semiring zero: it absorbs addition as well as multiplication.
The multiplicative identity $[\{1\}]$ is a different element.
\end{example}

\section{The finite basis problem for upper triangular matrices}
\label{sec:triangularnfb}

In this section, we apply Theorem~\ref{thm:algebraiclocality} to
$\UT_n(\overline S)$. We first construct identities that hold in this
matrix semiring and fail in the finite countermodels.

For $m\geq3$, use the words $p_m$ and $q_{m,i}$ from
\eqref{eq:scalarwords}. Recall that the products defining $q_{m,i}$
are taken in increasing variable order. For $n\geq1$, define
\begin{equation}\label{eq:theta}
 \Theta_{n,m}:\quad
 p_m^n\ \leq\
 R_{n,m}:=
 \bigoplus_{h=0}^{n-1}\ \bigoplus_{i=1}^m
 p_m^h q_{m,i}p_m^{n-1-h}.
\end{equation}
A block $p_m^0$ in this formula is omitted. Every actual monomial has
length $nm>0$, so \eqref{eq:theta} is an identity in the original
constant-free signature.

\begin{lem}\label{lem:blockidentity}
For all $n\geq1$ and $m\geq3$,
\[
 \UT_n(\overline S)\models\Theta_{n,m}.
\]
The same assertion holds when finite entries are allowed to be arbitrary
real numbers, with $-\infty$ still serving as zero.
\end{lem}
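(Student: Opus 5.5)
Fix an assignment $x_j\mapsto A_j\in\UT_n(\overline S)$ (or with real finite entries) and a coordinate $(a,b)$ with $a\le b$. By the path formula of Lemma~\ref{src:lem:paths}, which extends to $-\infty$ entries with the convention that paths through $-\infty$ contribute $-\infty$, the entry $(p_m^n)_{ab}$ is a maximum over nondecreasing state sequences $a=k_0\le k_1\le\cdots\le k_{nm}=b$ of sums of entries of the $A_j$. If this maximum is $-\infty$ there is nothing to prove. Otherwise, fix an optimal path $\gamma$. The plan is to produce, for some $h$ and $i$, a path for the word $p_m^hq_{m,i}p_m^{n-1-h}$ from $a$ to $b$ whose weight is at least that of $\gamma$.

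The key step is a pigeonhole argument on blocks. The word $p_m^n$ consists of $n$ consecutive blocks $p_m=x_1\cdots x_m$, and along $\gamma$ the state can strictly advance at most $b-a\le n-1$ times. Hence some block $h$ (the $(h+1)$st copy of $p_m$) contains no strict advance, so throughout that block the path stays at a single state $s$. Its contribution is then $\sum_{j=1}^m (A_j)_{ss}$, a sum of diagonal scalars $a_j:=(A_j)_{ss}$, all finite since $\gamma$ has finite weight. Now replace this block by $q_{m,i}$ while keeping the path constantly at $s$. Since $q_{m,i}$ is a multiplicative rearrangement with $x_i$ doubled and $x_{i^+}$ deleted, and the diagonal scalars commute, the new block contributes $\sum_j a_j + a_i - a_{i^+}$. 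The surrounding blocks keep the same path segments, because block boundaries are at states $s$ on both sides. So the new path is a valid path for $p_m^hq_{m,i}p_m^{n-1-h}$ from $a$ to $b$.

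Finally, choose $i$ with $a_i\ge a_{i^+}$, which exists by the cyclic argument of Lemma~\ref{src:lem:scalarvalid} and is valid for real finite values. The new path then has weight at least that of $\gamma$, so $(R_{n,m})_{ab}\ge(p_m^n)_{ab}$. This gives $p_m^n\oplus R_{n,m}=R_{n,m}$ entrywise. The only delicate point, which I expect to be the main obstacle in writing, is the bookkeeping showing that the loop block has all its transitions as diagonal loops at one state, including the transitions entering and leaving the block. That is handled by defining the block to be without advance as exactly the $m$ consecutive factor positions whose start and end states coincide, so that monotonicity forces all intermediate states to equal $s$. Nothing else uses nonnegativity, so the real-coefficient version follows verbatim.
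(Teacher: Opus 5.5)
Your proposal is correct and follows the paper's proof essentially step for step. The pigeonhole argument matches: the $n$ copies of $p_m$ against at most $n-1$ strict advances yield a loop-only block at one state. That block is replaced by $q_{m,i}$ at the same state for an index with $\alpha_i\geq\alpha_{i^+}$, and no sign condition on the finite weights is needed. (Coordinates below the diagonal are covered by the trivial $-\infty$ case, as in the paper.)
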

\begin{proof}
Let $x_j\mapsto X_j$ be an arbitrary matrix assignment, and fix an
output coordinate. If the value of $p_m^n$ at that coordinate is
$-\infty$, the required inequality holds. Otherwise, choose a path
attaining the finite maximum. All indices on this path
are nondecreasing, so there are at most $n-1$ strict advances.
Divide the word into its $n$ consecutive copies of $p_m$.
At least one entire copy, say the $(h+1)$st, contains no strict advance.
Every factor in that block is therefore a loop at one state $a$.
Let $\alpha_j=(X_j)_{aa}$ be the finite weight of its $x_j$-loop.

There exists an index $i$ with $\alpha_i\geq\alpha_{i^+}$, since the
opposite strict inequalities cannot hold around the entire cycle. Replace that loop-only block by $q_{m,i}$, keeping the rest of
the path unchanged and using only loops at $a$ in the replacement.
The new path belongs to the word
$p_m^h q_{m,i}p_m^{n-1-h}$ and changes the weight by
$\alpha_i-\alpha_{i^+}\geq0$. This word occurs in the right side of
\eqref{eq:theta}. Thus the right side is at least the selected maximum
of the left side at every coordinate.
The proof compares two finite loop weights and does not require them
to be nonnegative.
\end{proof}

\begin{thm}\label{thm:triangularrank}
Fix $n\geq1$. Every ai-semiring variety $\mathcal X$ satisfying
\begin{equation}\label{eq:triangintervalnfb}
 \V(S)\subseteq\mathcal X\subseteq\Uvar_n
\end{equation}
has infinite axiomatic rank. In particular, $\Uvar_n$ is nonfinitely
based, also relative to the ai-semiring axioms.
\end{thm}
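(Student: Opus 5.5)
The plan is to show, for each $r\geq1$, that $\Id_r(\mathcal X)$ fails to imply some identity of $\mathcal X$. The witness identity will be the loop-block identity $\Theta_{n,m}$ of Lemma~\ref{lem:blockidentity}, and the countermodel will be the finite commutative semiring $C_{k,m}$ of Theorem~\ref{thm:algebraiclocality} with $k=n$. Fix $r$ and put $m=\max\{3,r+1\}$, as in Corollary~\ref{cor:algebraicobstruction}. Since $\mathcal X\subseteq\Uvar_n$, Lemma~\ref{lem:blockidentity} gives $\mathcal X\models\Theta_{n,m}$. Since $\V(S)\subseteq\mathcal X$, we have $\Id(\mathcal X)\subseteq\Id(S)$, and hence $\Id_r(\mathcal X)\subseteq\Id_r(S)\subseteq\Id_{m-1}(S)$. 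By Theorem~\ref{thm:algebraiclocality}, $C_{n,m}$ satisfies every identity in $\Id_r(\mathcal X)$.

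The key step is to check that $C_{n,m}\not\models\Theta_{n,m}$. Because $C_{n,m}$ has commutative multiplication, every word evaluates to the same element as its commutative image. Both sides of $\Theta_{n,m}$ are constant-free words of length $nm$, so no identity element or empty word enters this reduction. The word $p_m^n$ becomes $(x_1\cdots x_m)^n$. Each summand $p_m^hq_{m,i}p_m^{n-1-h}$ has exponent vector $(n,\ldots,n)+\mathbf e_i-\mathbf e_{i^+}$, so it becomes the $i$th monomial on the right of $\eta_{n,m}$ for every $h$. By idempotence of addition, the $n$ copies indexed by $h$ collapse to one. Thus in $C_{n,m}$ the identity $\Theta_{n,m}$ is equivalent to $\eta_{n,m}$ with $k=n$. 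Concretely, the assignment $x_j\mapsto[\{X_j\}]$ from the proof of Theorem~\ref{thm:algebraiclocality} sends the left side to $\top$ and the right side to $\omega\neq\top$, so $\Theta_{n,m}$ fails.

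Soundness of equational logic then finishes the argument. $C_{n,m}$ is an ai-semiring, so it satisfies the ai-semiring axioms together with $\Id_r(\mathcal X)$, yet it fails $\Theta_{n,m}\in\Id(\mathcal X)$. Hence $\Id_r(\mathcal X)$, even together with the ai-semiring axioms, does not imply $\Id(\mathcal X)$. Since $r$ was arbitrary, $\mathcal X$ has infinite axiomatic rank. A finite basis would use only boundedly many variables, so $\mathcal X$ is nonfinitely based, both absolutely and relative to the ai-semiring axioms. Taking $\mathcal X=\Uvar_n$ is allowed because $\V(S)\subseteq\Uvar_n$: the map $a\mapsto$ the $1\times1$ matrix $(a)$ embeds $S$ into $\UT_1(\overline S)$, and \eqref{eq:utembedding} then carries it into $\UT_n(\overline S)$.

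I expect the only real obstacle to be the reduction of $\Theta_{n,m}$ to $\eta_{n,m}$. One has to confirm that the exponent bookkeeping of $q_{m,i}$ matches $\nu_i$ with $k=n$, including the case $n=1$, where $k-1=0$ and factors are omitted. One also has to confirm that the choice of $m\geq3$ meets the hypotheses of both Lemma~\ref{lem:blockidentity} and Theorem~\ref{thm:algebraiclocality}. Everything else is direct citation of earlier results.
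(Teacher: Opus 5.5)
Your proposal is correct and matches the paper's proof step for step: the same choice $m=\max\{3,r+1\}$, the countermodel $C_{n,m}$ satisfying $\Id_r(\mathcal X)\subseteq\Id_r(S)$ by Theorem~\ref{thm:algebraiclocality}, validity of $\Theta_{n,m}$ in $\mathcal X$ by Lemma~\ref{lem:blockidentity}, its failure under $x_j\mapsto[\{X_j\}]$, and soundness of equational logic. Your separate check that $\V(S)\subseteq\Uvar_n$, via the $1\times1$ embedding followed by \eqref{eq:utembedding}, is also valid; the paper gets this inclusion from $a\mapsto aI_k$ in Corollary~\ref{cor:UTallNFB}.
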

\begin{proof}
Let $r\geq1$ be arbitrary, and put $m=\max\{3,r+1\}$.
Consider the finite commutative ai-semiring $C_{n,m}$ given by
Theorem~\ref{thm:algebraiclocality}. Since $S\in\mathcal X$, we have
\[
 \Id_r(\mathcal X)\subseteq\Id_r(S),
 \qquad C_{n,m}\models\Id_r(\mathcal X).
\]
On the other hand, $\Theta_{n,m}$ holds throughout $\mathcal X$
by Lemma~\ref{lem:blockidentity}.

In $C_{n,m}$ assign $x_j=[\{X_j\}]$. Its commutative multiplication
makes the left side of $\Theta_{n,m}$ the class of
$\mu=(X_1\cdots X_m)^n$. For each $h$, the right-hand word
$p_m^h q_{m,i}p_m^{n-1-h}$ becomes the class of
$\nu_i=\mu X_i/X_{i^+}$. By \eqref{eq:residualseparation}, the
left side is $\top$ and the entire right side is $\omega\ne\top$.
Thus $C_{n,m}\not\models\Theta_{n,m}$.

We have therefore found an ai-semiring satisfying every identity in
$\Id_r(\mathcal X)$ but failing an identity of $\mathcal X$.
By soundness of equational logic, $\Id_r(\mathcal X)$ does not imply
all identities of $\mathcal X$. Since $r$ was arbitrary, the variety
has infinite axiomatic rank. Moreover, $C_{n,m}$ satisfies all the
ai-semiring laws, so the same argument applies when those laws are
available as background axioms. This proves the relative assertion
as well.
\end{proof}

\begin{cor}\label{cor:UTallNFB}
Every variety in $[\Uvar_n,\Uvar_m]$, for $1\leq n<m$, has infinite
axiomatic rank and is nonfinitely based.
\end{cor}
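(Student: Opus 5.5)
The plan is to reduce the corollary to Theorem~\ref{thm:triangularrank}, applied with the upper dimension $m$. To use it, every variety $\mathcal X$ in $[\Uvar_n,\Uvar_m]$ must lie between $\V(S)$ and $\Uvar_m$.

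For the upper bound, $\mathcal X\subseteq\Uvar_m$ holds by the definition of the interval. For the lower bound, we need $\V(S)\subseteq\Uvar_n\subseteq\mathcal X$, so it suffices to embed $S$ into $\UT_n(\overline S)$. We cannot reuse the constant-matrix embedding of Lemma~\ref{src:lem:embedding} directly, since constant matrices are not upper triangular. Instead we take $a\mapsto aI_n$, where $I_n$ is the max-plus identity matrix: diagonal entries $0$ and all other entries $-\infty$.

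This map preserves both operations. Addition is entrywise maximum, so it is clearly preserved. For multiplication, the diagonal entry of $(aI_n)(bI_n)$ at $(i,i)$ is $a+b$, and every off-diagonal entry is $-\infty$, because each path $i\to k\to j$ with $i\ne j$ must use an off-diagonal factor. The map is injective. Alternatively, the scalar case $n=1$ gives $\UT_1(\overline S)=\overline S\supseteq S$, and the embeddings \eqref{eq:utembedding} then yield $\V(S)\subseteq\Uvar_1\subseteq\Uvar_n$. Either way we obtain $\V(S)\subseteq\Uvar_n$.

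With $\V(S)\subseteq\mathcal X\subseteq\Uvar_m$ established, Theorem~\ref{thm:triangularrank} with $n$ replaced by $m$ shows that $\mathcal X$ has infinite axiomatic rank. Infinite axiomatic rank implies the nonfinite basis property, because a finite basis uses only boundedly many variables and would therefore already follow from some $\Id_r(\mathcal X)$.

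The only point requiring care is the embedding $S\hookrightarrow\UT_n(\overline S)$. Since the signature has no constants, the map $a\mapsto aI_n$ only needs to preserve the two binary operations, and it does.
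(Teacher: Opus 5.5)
Your proposal is correct and matches the paper's proof: the paper likewise embeds $S$ into $\UT_n(\overline S)$ by $a\mapsto aI_n$ to obtain $\V(S)\subseteq\Uvar_n$, and then applies Theorem~\ref{thm:triangularrank} with upper endpoint $\Uvar_m$. Your alternative route through $\UT_1(\overline S)=\overline S$ and the embeddings \eqref{eq:utembedding} is also valid.
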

\begin{proof}
The map $a\mapsto aI_k$ embeds $S$ into
$\UT_k(\overline S)$: it has diagonal entries $a$ and off-diagonal
entries $-\infty$. Thus $\V(S)\subseteq\Uvar_n$. Apply
Theorem~\ref{thm:triangularrank} with upper endpoint $\Uvar_m$.
\end{proof}

\begin{remark}\label{rem:paddingfails}
\[
 (zxz)(zyz)\leq(z^3xz^3)^2\oplus(z^3yz^3)^2.
\]
In $\UT_2(\overline S)$, set $z=I$ and
\[
 X=\begin{pmatrix}-\infty&0\\-\infty&-\infty\end{pmatrix},\qquad
 Y=\begin{pmatrix}-\infty&-\infty\\-\infty&0\end{pmatrix}.
\]
Then $(XY)_{12}=0$, whereas $(X^2\oplus Y^2)_{12}=-\infty$.
Consequently, this padded identity does not hold in
$\UT_2(\overline S)$. The proof of Theorem~\ref{thm:triangularrank}
uses \eqref{eq:theta} instead.
\end{remark}

\section{The finite basis problem for full matrices}
\label{sec:fullnfb}

We now apply the finite commutative algebras of Section~\ref{sec:algebra}
to full matrices. 
We give its details, including the entrywise bounds
and an explicit matrix example. Combining the estimate with $C_{1,m}$
yields an algebraic proof that does not use the convex-ideal countermodels
or their locality theorem.

\subsection{Cubic padding and uniform entrywise bounds}
The scalar identity $e_m$ holds in $S$, but
Example~\ref{src:ex:scalarfails} prevents its direct use for full
matrices. For every $m\geq2$, use \eqref{eq:scalarwords} and put
\begin{equation}\label{src:eq:paddedblocks}
a_j=zx_jz,\qquad b_j=z^3x_jz^3,
\end{equation}
and define
\begin{equation}\label{src:eq:LR}
L_m=a_1a_2\cdots a_m,\qquad
R_{m,i}=q_{m,i}(b_1,\ldots,b_m),\qquad
R_m=\bigoplus_{i=1}^{m}R_{m,i}.
\end{equation}
The padded identity is
\begin{equation}\label{eq:padding}
\varepsilon_m:\qquad L_m\oplus R_m\approx R_m.
\end{equation}
These are terms in the binary signature. The exponent $3$ means
three repeated factors of $z$, and each product in $R_{m,i}$
retains the increasing variable order fixed in
\eqref{eq:scalarwords}. The identity $\varepsilon_m$ uses $m+1$
distinct variables.

For example, $\varepsilon_2$ states
\[
(zxz)(zyz)\leq
(z^3xz^3)^2\oplus(z^3yz^3)^2.
\]
When $m=3$, its right-hand side is
\[
b_1^2b_3\oplus b_1b_2^2\oplus b_2b_3^2.
\]
These orders matter for matrices; no rearrangement of factors
will be used.

\begin{lem}\label{src:lem:cubic}
Let $Z$ be an $n\times n$ matrix with entries in $\R_{\geq0}$,
and let $M=\max_{u,v}Z_{uv}$. Every entry of $Z^3$ is at least $M$.
\end{lem}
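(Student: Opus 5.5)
The plan is to use the path formula of Lemma~\ref{src:lem:paths} for $\ell=3$ and to route a path of length three through an entry where $Z$ attains its maximum. Fix $(u^*,v^*)$ with $Z_{u^*v^*}=M$, and fix an arbitrary output coordinate $(a,b)$. By \eqref{src:eq:path},
\[
 (Z^3)_{ab}=\max_{k_1,k_2}\bigl(Z_{ak_1}+Z_{k_1k_2}+Z_{k_2b}\bigr).
\]
Take the specific intermediate indices $k_1=u^*$ and $k_2=v^*$.

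With this choice the candidate sum is $Z_{au^*}+M+Z_{v^*b}$. All entries of $Z$ lie in $\R_{\geq0}$, so the two outer summands are nonnegative and the sum is at least $M$. Since $(Z^3)_{ab}$ is a maximum over all choices of intermediate indices, $(Z^3)_{ab}\geq M$. The coordinate $(a,b)$ was arbitrary, so the bound holds for every entry. The argument uses no commutativity, and the cube is exactly what is needed: one factor on each side of the middle lets the path enter $u^*$ from $a$ and leave $v^*$ toward $b$, whatever $a$ and $b$ are.

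There is no real obstacle. The only point to stress is where nonnegativity is used: it is needed precisely to discard the two connecting entries $Z_{au^*}$ and $Z_{v^*b}$. This is why the estimate fails once negative entries or $-\infty$ are allowed, consistent with Remark~\ref{rem:paddingfails}.
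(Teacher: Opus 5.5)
Your proposal is correct and is essentially the paper's proof: both fix a position $(u,v)$ where $Z$ attains $M$, take the path $a\to u\to v\to b$ as one candidate in \eqref{src:eq:path}, and discard the two connecting entries by nonnegativity. As a minor side note, the paper's main discussion of why negative entries or $-\infty$ defeat the padding argument is Remark~\ref{src:rem:scope}, although Remark~\ref{rem:paddingfails} does illustrate the $-\infty$ case.
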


\begin{proof}
Choose $u,v$ with $Z_{uv}=M$. For arbitrary $a,b$, the path
$a\to u\to v\to b$ is one of the candidates in \eqref{src:eq:path}.
It has weight
\[
Z_{au}+Z_{uv}+Z_{vb}\geq M,
\]
because both connecting entries are nonnegative. Hence
$(Z^3)_{ab}\geq M$.
\end{proof}

\begin{remark}\label{src:rem:square}
The corresponding uniform bound need not hold for $Z^2$.
For
\[
Z=\begin{pmatrix}100&0\\0&0\end{pmatrix}
\]
the maximum entry is $100$, but $(Z^2)_{22}=0$.
The three-factor path in Lemma~\ref{src:lem:cubic} allows a specified
largest entry to be connected to arbitrary initial and final
indices.
\end{remark}

\begin{pro}\label{lem:fullpadding}
For every $n\geq1$ and $m\geq2$, the matrix ai-semiring
$\Mat_n(\SR)$ satisfies $\varepsilon_m$. In particular, $\varepsilon_m$
holds in $\M n$.
\end{pro}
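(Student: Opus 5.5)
The plan is to compare the two sides of $\varepsilon_m$ entrywise. I will use only two quantities: an upper bound for every entry of $L_m$, and a lower bound for every entry of each $R_{m,i}$. Both will depend only on the maximal entries of the assigned matrices. Fix an assignment $z\mapsto Z$, $x_j\mapsto X_j$ in $\Mat_n(\SR)$. Put $M=\max_{u,v}Z_{uv}$ and $\xi_j=\max_{u,v}(X_j)_{uv}$. All of these are finite and nonnegative.

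First, the upper bound. Each factor $a_j=ZX_jZ$ is a product of three matrices whose entries are bounded by $M$, $\xi_j$ and $M$. By Lemma~\ref{src:lem:paths}, every entry of $a_j$ is at most $2M+\xi_j$. Applying the same lemma to the $m$ factors of $L_m$ shows that every entry of $L_m$ is at most $2mM+\sum_j\xi_j$.

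Second, the lower bound, which is the only step needing care. Here I use Lemma~\ref{src:lem:cubic}: every entry of $Z^3$ is at least $M$. To bound $(b_j)_{ab}=(Z^3X_jZ^3)_{ab}$ from below, choose $(u,v)$ with $(X_j)_{uv}=\xi_j$ and take the single candidate path $a\to u\to v\to b$ in the three-factor product. Its weight is $(Z^3)_{au}+\xi_j+(Z^3)_{vb}\geq 2M+\xi_j$. So every entry of $b_j$ is at least $2M+\xi_j$.

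This uniform lower bound is exactly what fails without the cube, as Remark~\ref{src:rem:square} and Example~\ref{src:ex:scalarfails} show. Once it is in hand, the lower-bound half of Lemma~\ref{src:lem:paths} applies to $R_{m,i}=q_{m,i}(b_1,\ldots,b_m)$. This word has $m$ factors, with $b_i$ appearing twice, $b_{i^+}$ omitted, and every other $b_j$ once. Hence every entry of $R_{m,i}$ is at least
\[
2mM+\textstyle\sum_j\xi_j+\xi_i-\xi_{i^+}.
\]
The order of the factors in $q_{m,i}$ is irrelevant here, since the bound is uniform over entries.

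Finally, I use the cyclic argument from Lemma~\ref{src:lem:scalarvalid}. Some index $i$ satisfies $\xi_i\geq\xi_{i^+}$, since otherwise the strict inequalities would run around the cycle and give $\xi_1<\xi_1$; this also covers $m=2$. For that $i$, every entry of $R_{m,i}$, and hence of $R_m$, is at least $2mM+\sum_j\xi_j$. This dominates every entry of $L_m$, so $L_m\oplus R_m=R_m$ under the assignment. The statement for $\M n$ follows because $\Mat_n(S)$ is a subalgebra of $\Mat_n(\SR)$.
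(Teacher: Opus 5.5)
Your proposal is correct and follows the paper's proof essentially step for step. It has the same ingredients: the uniform upper bound $2mM+\sum_j\xi_j$ on every entry of $L_m$, the cubic lower bound $2M+\xi_j$ on every entry of $b_j$ via the path $a\to u\to v\to b$, the multiplicity count for $R_{m,i}$, the cyclic choice of $i$, and restriction to the subalgebra $\M n$. The only cosmetic difference is that you bound each $a_j$ first and then multiply, whereas the paper applies Lemma~\ref{src:lem:paths} once to all $3m$ factors of $L_m$.
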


\begin{proof}
Fix an arbitrary assignment
\[
z=Z,\qquad x_j=X_j\quad(1\leq j\leq m)
\]
of matrices with finite nonnegative real entries. Put
\[
M=\max_{a,b}Z_{ab},\qquad
\Mat_j=\max_{a,b}(X_j)_{ab},\qquad
H=2mM+\sum_{j=1}^{m}\Mat_j.
\]
Write $A_j=ZX_jZ$ and $B_j=Z^3X_jZ^3$ for the values of the
two types of blocks.

The product $A_1\cdots A_m$ contains exactly $2m$ factors equal
to $Z$ and one factor equal to each $X_j$. Every entry of $Z$
is at most $M$, and every entry of $X_j$ is at most $\Mat_j$.
The upper-bound part of Lemma~\ref{src:lem:paths}, applied to this
list of factors, gives
\begin{equation}\label{src:eq:Lbound}
(L_m(Z,X_1,\ldots,X_m))_{ab}\leq H
\quad\text{for all }a,b.
\end{equation}

Next, choose indices $u_j,v_j$ where $X_j$ has its maximum
entry $\Mat_j$. Since every entry of $Z^3$ is at least $M$, the
particular term with these intermediate indices in
$Z^3X_jZ^3$ gives
\[
(B_j)_{ab}\geq
(Z^3)_{a u_j}+(X_j)_{u_j v_j}+(Z^3)_{v_j b}
\geq 2M+\Mat_j.
\]
Thus
\begin{equation}\label{src:eq:Bbound}
(B_j)_{ab}\geq2M+\Mat_j
\quad\text{for every }a,b\text{ and }j.
\end{equation}
The bound is uniform over all entries; it does not merely
assert that $B_j$ has one large entry.

Let
\[
c_{ij}=1+\mathbf1_{j=i}-\mathbf1_{j=i^+}.
\]
The word $R_{m,i}$ has $c_{ij}$ copies of $b_j$, and
$\sum_jc_{ij}=m$. Applying the lower-bound part of
Lemma~\ref{src:lem:paths} to these $m$ factors, in their specified
order, yields
\begin{align}
(R_{m,i}(Z,X_1,\ldots,X_m))_{ab}
&\geq\sum_{j=1}^{m}c_{ij}(2M+\Mat_j)\notag\\
&=2mM+\sum_{j=1}^{m}\Mat_j+\Mat_i-\Mat_{i^+}\notag\\
&=H+\Mat_i-\Mat_{i^+}.\label{src:eq:Rbound}
\end{align}
This calculation uses the multiplicities of the factors to
add scalar lower bounds; it does not require commutativity
of the matrix product.

There is an index $i$ with $\Mat_i\geq \Mat_{i^+}$, by the same cyclic
argument as in Lemma~\ref{src:lem:scalarvalid}. Choose this single
index for all pairs $a,b$. Equation~\eqref{src:eq:Rbound} gives
\[
(R_m(Z,X_1,\ldots,X_m))_{ab}
\geq(R_{m,i}(Z,X_1,\ldots,X_m))_{ab}
\geq H.
\]
Together with \eqref{src:eq:Lbound}, this proves $L_m\leq R_m$
entrywise. The assignment was arbitrary, so $\varepsilon_m$
holds in $\Mat_n(\SR)$.
\end{proof}

The cubic bound and the comparison with $H$ do not depend on
$n$. This dimension independence will also permit a conclusion
for the variety generated by all matrix dimensions.

\begin{example}\label{src:ex:matrixworked}
Consider the assignment for $m=n=2$
\[
Z=\begin{pmatrix}2&0\\1&0\end{pmatrix},\qquad
X_1=\begin{pmatrix}1&3\\0&2\end{pmatrix},\qquad
X_2=\begin{pmatrix}0&1\\4&0\end{pmatrix}.
\]
Its maxima are $M=2$, $\Mat_1=3$, $\Mat_2=4$, and therefore $H=15$.
The relevant powers and blocks are
\[
Z^2=\begin{pmatrix}4&2\\3&1\end{pmatrix},
\qquad
Z^3=\begin{pmatrix}6&4\\5&3\end{pmatrix},
\]
\[
A_1=ZX_1Z=\begin{pmatrix}6&5\\5&4\end{pmatrix},
\qquad
A_2=ZX_2Z=\begin{pmatrix}6&4\\6&4\end{pmatrix}.
\]
For example,
\[
(A_1A_2)_{12}
=\max\{6+4,5+4\}=10.
\]
The full left-hand product is
\[
L_2=A_1A_2=\begin{pmatrix}12&10\\11&9\end{pmatrix},
\]
whose entries are indeed at most $15$. On the right,
\[
B_1=Z^3X_1Z^3=B_2=Z^3X_2Z^3
=\begin{pmatrix}14&12\\13&11\end{pmatrix},
\]
and hence
\[
R_2=B_1^2\oplus B_2^2
=\begin{pmatrix}28&26\\27&25\end{pmatrix}.
\]
The inequality $L_2\leq R_2$ holds in all four entries.
The estimates \eqref{src:eq:Bbound} give the respective uniform
bounds $7$ and $8$ for $B_1$ and $B_2$; the two right-hand
words consequently have lower bounds $14$ and $16$.
The latter exceeds $H=15$, which is enough for the proof even
though the actual right-hand entries are larger.
\end{example}

\subsection{A transfer criterion and infinite axiomatic rank}
The two ingredients may be isolated in a criterion for a nonempty
class of ai-semirings, rather than just a single matrix algebra.

\begin{pro}\label{pro:fulltransfer}
Let $\mathcal C$ be a nonempty class of ai-semirings in the binary
signature. Suppose that
\[
 \Id(\mathcal C)\subseteq\Id(S)
\]
and that every member of $\mathcal C$ satisfies $\eps_m$ for every
$m\geq3$. Then $\mathcal C$ has infinite axiomatic rank. Its
equational theory has no finite basis, including no finite basis
relative to the ai-semiring axioms.
\end{pro}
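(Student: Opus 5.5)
The plan mirrors the proof of Theorem~\ref{thm:triangularrank}, using $\eps_m$ in place of $\Theta_{n,m}$ and the countermodel $C_{1,m}$ with a suitable treatment of the padding variable. Fix $r\geq1$ and put $m=\max\{3,r+1\}$. Since $\Id(\mathcal C)\subseteq\Id(S)$, we have $\Id_r(\mathcal C)\subseteq\Id_r(S)\subseteq\Id_{m-1}(S)$. Theorem~\ref{thm:algebraiclocality} with $k=1$ then gives $C_{1,m}\models\Id_r(\mathcal C)$. The assertion about all $r$-variable identities, including those valid only relative to ai-semiring laws, comes for free because $C_{1,m}$ is itself a (commutative) ai-semiring. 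By hypothesis $\eps_m\in\Id(\mathcal C)$. So the whole proof reduces to showing $C_{1,m}\not\models\eps_m$; soundness of equational logic then shows that $\Id_r(\mathcal C)$ does not imply $\Id(\mathcal C)$.

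To refute $\eps_m$ in $C_{1,m}$, I would take $z$ to be the multiplicative identity $[\{1\}]$ of $C_{1,m}$, which Lemma~\ref{lem:moncongruence} provides as an element. This is legitimate, since the identity is an element of the algebra and not a constant of the signature. I would also take $x_j=[\{X_j\}]$. With these choices the padding disappears: $a_j$ and $b_j$ both evaluate to $[\{X_j\}]$. By commutativity $L_m$ evaluates to $[\{\mu\}]$ with $\mu=X_1\cdots X_m$. Each $R_{m,i}=q_{m,i}(b_1,\ldots,b_m)$ evaluates to the class of $\prod_jX_j^{1+\mathbf 1_{j=i}-\mathbf 1_{j=i^+}}=\nu_i$ from \eqref{eq:cyclicmonomials} with $k=1$. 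By \eqref{eq:residualseparation}, the left side is $\top$ and the right side is $\omega\neq\top$. Since $\top\oplus\omega=\top\neq\omega$, the identity $\eps_m$ fails. This is exactly the ``substitution of an actual multiplicative identity for $z$'' mentioned in the introduction.

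The only step needing care is this substitution. I need to check that the verification of $C_{1,m}\models\Id_{m-1}(S)$ in Theorem~\ref{thm:algebraiclocality} allows arbitrary assignments, including $[\{1\}]$; it explicitly does. I also need to check that the failure of $\eps_m$ does not rely on $z$ being a generator, which it does not. Nonemptiness of $\mathcal C$ ensures that $\Id(\mathcal C)$ is a genuine theory containing $\eps_m$. Since $r$ is arbitrary, $\mathcal C$ has infinite axiomatic rank. Any finite basis uses boundedly many variables, so there is no finite basis, and the same holds relative to the ai-semiring axioms because the countermodel satisfies them.
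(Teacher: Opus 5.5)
Your proposal is correct and follows the paper's proof step for step. You take $m=\max\{3,r+1\}$, apply the locality theorem for $C_{1,m}$ to get $C_{1,m}\models\Id_r(\mathcal C)$, and use the assignment $z=[\{1\}]$, $x_j=[\{X_j\}]$, which collapses $\eps_m$ to $\eta_{1,m}$ with sides $\top$ and $\omega\neq\top$ by \eqref{eq:residualseparation}, before concluding by soundness. (Your remark about nonemptiness is unnecessary, since the hypothesis $\Id(\mathcal C)\subseteq\Id(S)$ already forces $\mathcal C\neq\varnothing$, but it does no harm.)
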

\begin{proof}
Let $r\geq1$ be arbitrary and put $m=\max\{3,r+1\}$.
By Theorem~\ref{thm:algebraiclocality}, the finite commutative
ai-semiring $C_{1,m}$ satisfies every $r$-variable identity of $S$.
Hence
\[
 C_{1,m}\models\Id_r(\mathcal C).
\]
In $C_{1,m}$, assign $z$ its actual multiplicative identity
$e=[\{1\}]$ and assign $x_j=[\{X_j\}]$. Both $zx_jz$ and
$z^3x_jz^3$ then have value $[\{X_j\}]$. Consequently, the two
sides of the inequality in \eqref{eq:padding} have the values of
$\eta_{1,m}$, namely $\top$ and $\omega\ne\top$ by
\eqref{eq:residualseparation}. Thus
\[
 C_{1,m}\not\models\eps_m.
\]
By hypothesis, $\eps_m$ belongs to $\Id(\mathcal C)$. Soundness of
equational logic therefore shows that $\Id_r(\mathcal C)$ cannot
imply $\eps_m$. This argument includes arbitrary substitutions in
any proposed derivation: every equational consequence of identities
valid in an algebra is valid in that algebra. Since $r$ was arbitrary,
$\mathcal C$ has infinite axiomatic rank.

If a finite basis existed, choose $r$ bounding the number of variables
in each of its identities. The same countermodel would satisfy the
basis and fail the valid identity $\eps_m$, a contradiction.
The relative assertion follows because $C_{1,m}$ is itself an
ai-semiring and satisfies every background ai-semiring axiom.
\end{proof}

\begin{cor}\label{cor:fullsingle}
For every positive integer $n$, the full max-plus matrix ai-semiring
$\Mat_n(S)$ has infinite axiomatic rank in the signature
$\{\oplus,\odot\}$. Its generated variety has no finite equational
basis, either absolutely or relative to the ai-semiring axioms.
\end{cor}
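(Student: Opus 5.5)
The plan is to apply Proposition~\ref{pro:fulltransfer} to the singleton class $\mathcal C=\{\M n\}$. Both hypotheses of that criterion are already available, so the corollary should follow by checking them and then reading off the conclusions.

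First, the identity inclusion $\Id(\M n)\subseteq\Id(\Smax)$ is exactly \eqref{src:eq:idinclusion} in Lemma~\ref{src:lem:embedding}. It comes from the embedding $a\mapsto C_n(a)$ by constant matrices. This works for every $n\geq1$, including $n=1$, where the embedding is the identity map.

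Second, Proposition~\ref{lem:fullpadding} shows that $\M n$ satisfies $\eps_m$ for every $m\geq2$, so in particular for every $m\geq3$. The relevant fact is that $\M n$ is a subalgebra of $\Mat_n(\SR)$ in which the cubic-padding estimate was carried out. Identities pass to subalgebras, so the restriction to $\N$-entries costs nothing.

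With these two inputs, Proposition~\ref{pro:fulltransfer} gives infinite axiomatic rank for $\M n$ in the signature $\{\oplus,\odot\}$. It also gives nonexistence of a finite basis, both absolutely and relative to the ai-semiring axioms.

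The last step is to pass from $\M n$ to $\Fvar_n=\V(\M n)$. An algebra and its generated variety have the same equational theory, so $\Id_r(\Fvar_n)=\Id_r(\M n)$ for every $r$. Hence the statements about axiomatic rank and finite bases transfer verbatim.

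I expect no genuine obstacle. The only point needing care is that the hypothesis ``$m\geq3$'' in Proposition~\ref{pro:fulltransfer} is covered by the range $m\geq2$ in Proposition~\ref{lem:fullpadding}. This matters because the countermodels $C_{1,m}$ require $m\geq3$.
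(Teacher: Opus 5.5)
Your proposal is correct and matches the paper's proof: it applies Proposition~\ref{pro:fulltransfer} to the singleton class $\{\Mat_n(S)\}$, with Lemma~\ref{src:lem:embedding} supplying $\Id(\Mat_n(S))\subseteq\Id(S)$ and Proposition~\ref{lem:fullpadding} supplying every $\varepsilon_m$. Your remarks on the range $m\geq3$ and on passing from $\Mat_n(S)$ to $\Fvar_n$ are accurate; they only make explicit what the paper leaves implicit.
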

\begin{proof}
Lemma~\ref{src:lem:embedding} gives
$\Id(\Mat_n(S))\subseteq\Id(S)$, and
Proposition~\ref{lem:fullpadding} gives every $\eps_m$.
Apply Proposition~\ref{pro:fulltransfer} to the singleton class
$\{\Mat_n(S)\}$.
\end{proof}

\begin{remark}\label{src:rem:rank}
The parameter $m=\max\{3,r+1\}$ depends on the variable bound $r$,
not on the matrix dimension. A countermodel may be commutative and
need not be a matrix algebra: an equational implication must hold in
every algebra satisfying its premises. 

The conclusion is stronger than nonfinite basability. For every
fixed variable bound, a valid identity is not implied by all valid
identities within that bound. We do not assert that the padded
identities $\eps_m$ form a complete basis, or that they are pairwise
independent. The independent identities used for the continuum
intervals are the different family $\sigma_{n,p}$.
\end{remark}

\begin{thm}\label{thm:fullrank}\label{src:cor:interval}
Let
\[
 \mathcal V_0=\V(S),\qquad
 \mathcal W=\Fvar_\infty=\bigvee_{k\geq1}\Fvar_k
 =\V\bigl(\{\Mat_k(S):k\geq1\}\bigr).
\]
Every variety $\mathcal X$ with
$\mathcal V_0\subseteq\mathcal X\subseteq\mathcal W$
has infinite axiomatic rank and no finite equational basis, also
relative to the ai-semiring laws. In particular, the assertion holds
for every variety in $[\V(S),\V(\Mat_n(S))]$ for each fixed $n$.
\end{thm}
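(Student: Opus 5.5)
The plan is to apply Proposition~\ref{pro:fulltransfer} to the class $\mathcal C=\mathcal X$, viewed as a class of ai-semirings. This requires two hypotheses: $\Id(\mathcal X)\subseteq\Id(S)$, and $\mathcal X\models\eps_m$ for all $m\geq3$. The equality $\mathcal W=\V(\{\Mat_k(S):k\geq1\})$ is the standard fact that the join of varieties is generated by the union of their generators. It also follows from Lemma~\ref{lem:embedding}, since the chain $\Fvar_k$ is ascending.

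For the first hypothesis, $\mathcal V_0\subseteq\mathcal X$ gives $S\in\mathcal X$. Hence every identity of $\mathcal X$ holds in $S$, so $\Id(\mathcal X)\subseteq\Id(S)$.

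For the second hypothesis, $\mathcal X\subseteq\mathcal W$ gives $\Id(\mathcal W)\subseteq\Id(\mathcal X)$. So it suffices to show $\mathcal W\models\eps_m$. An identity holds in $\mathcal W$ exactly when it holds in every generator $\Mat_k(S)$, since identities are preserved by $\mathsf{HSP}$. Proposition~\ref{lem:fullpadding} provides $\eps_m$ in $\Mat_k(S)$ for every $k\geq1$ and every $m\geq2$. The essential point here is that this proposition is dimension independent: the cubic estimate of Lemma~\ref{src:lem:cubic} and the comparison with $H$ never refer to $k$. It is also essential that $\eps_m$ is a single fixed identity, not a family depending on $k$. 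I expect this to be the only delicate step. A dimension-dependent identity such as $\Theta_{n,m}$ or \eqref{eq:AL} would not survive passage to the infinite join, and that is exactly why the padded identities are used here.

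Proposition~\ref{pro:fulltransfer} then yields infinite axiomatic rank for $\mathcal X$. Because the countermodels $C_{1,m}$ are ai-semirings, it also yields the absence of a finite basis, both absolutely and relative to the ai-semiring axioms. For the final sentence, note that $\Fvar_n\subseteq\mathcal W$, so $[\V(S),\Fvar_n]\subseteq[\mathcal V_0,\mathcal W]$. Lemma~\ref{src:lem:embedding} ensures $\V(S)\subseteq\Fvar_n$, so this interval is well defined.
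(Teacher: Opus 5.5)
Your proposal is correct and is essentially the paper's proof: $S\in\mathcal X$ gives $\Id(\mathcal X)\subseteq\Id(S)$, the dimension-independent Proposition~\ref{lem:fullpadding} places every $\eps_m$ in $\Id(\mathcal W)\subseteq\Id(\mathcal X)$, Proposition~\ref{pro:fulltransfer} finishes the argument, and the fixed-$n$ case follows from the same interval inclusion. Your aside that $\bigvee_k\Fvar_k=\V(\{\Mat_k(S):k\geq1\})$ follows from the chain being ascending is unnecessary (and ascendingness is not what gives it); this equality is just the standard description of a join of varieties, which the paper takes as the definition of $\mathcal W$.
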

\begin{proof}
Since $S\in\mathcal X$, every identity of $\mathcal X$ holds in $S$.
By Proposition~\ref{lem:fullpadding}, $\eps_m$ holds in every
generator of $\mathcal W$, for every $m\geq3$. Identities are
preserved by homomorphic images, subalgebras and direct products,
so all these identities hold throughout $\mathcal W$ and hence
throughout $\mathcal X$. Apply Proposition~\ref{pro:fulltransfer}.

For fixed $n$, Lemma~\ref{src:lem:embedding} gives
$\mathcal V_0\subseteq\V(\Mat_n(S))$, and the definition of
$\mathcal W$ gives $\V(\Mat_n(S))\subseteq\mathcal W$.
Thus the fixed-dimension interval is contained in the interval just
treated.
\end{proof}

Theorem~\ref{thm:fullrank} concerns only subvarieties containing
$\V(S)$. It does not classify all subvarieties of $\V(\Mat_n(S))$ or
determine their number by itself. Those cardinality assertions require
the separate construction in Section~\ref{sec:continuum}.
Only validity of the padded identities in $\mathcal W$ has been
used; no assertion that they axiomatize $\mathcal W$ is needed.

\begin{proof}[Proof of Theorem~\ref{thm:main}]
Strictness follows from Section~\ref{sec:matrix}. The order embeddings
and cardinalities are proved in Theorem~\ref{thm:intervals} and
Corollary~\ref{cor:cardinality}. Constant matrices embed $S$ into every
$\Mat_n(S)$, so Theorem~\ref{thm:fullrank} applies to every variety in a
full-matrix interval. Corollary~\ref{cor:UTallNFB} treats every variety
in an upper triangular interval. The assertions about chains,
antichains, and relative nonfinite basability have already been included
in those results.
\end{proof}

\section{Coefficient domains and equational equivalence}\label{sec:domains}

The first part concerns full matrices with finite
nonnegative entries. The second part records the different conclusions
that remain valid for more general tropical coefficient domains.

\subsection{Six equivalent full-matrix theories}
The preceding matrix proof works directly for nonnegative real
coefficients, and the scalar identity inclusion remains available
over larger domains.

\begin{cor}\label{src:cor:nonnegative}
For every $n\geq1$, both $\Mat_n(S_{\Q_{\geq0}})$ and
$\Mat_n(S_{\R_{\geq0}})$ have infinite axiomatic rank and have
no finite equational basis, either absolutely or relative
to the ai-semiring axioms.
\end{cor}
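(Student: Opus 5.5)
The plan is to apply Proposition~\ref{pro:fulltransfer} to the singleton classes $\mathcal C=\{\Mat_n(S_{\Q_{\geq0}})\}$ and $\mathcal C=\{\Mat_n(S_{\R_{\geq0}})\}$. That proposition needs two inputs: the inclusion $\Id(\mathcal C)\subseteq\Id(S)$, and validity of $\eps_m$ in every member of $\mathcal C$ for all $m\geq3$.

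The second input is immediate. Proposition~\ref{lem:fullpadding} is already stated and proved for $\Mat_n(\SR)$, which is exactly $\Mat_n(S_{\R_{\geq0}})$. The rational version follows because $\Mat_n(S_{\Q_{\geq0}})$ is a subalgebra of $\Mat_n(S_{\R_{\geq0}})$: $\Q_{\geq0}$ is closed under maximum and addition, and matrix operations computed over $\Q_{\geq0}$ agree with those computed over $\R_{\geq0}$. Identities pass to subalgebras, so $\eps_m$ holds there as well.

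For the first input, we reuse the constant-matrix embedding from the proof of Lemma~\ref{src:lem:embedding}. The map $a\mapsto C_n(a)$ embeds $S=(\N,\max,+)$ into $\Mat_n(S_D)$ for any $D\supseteq\N$; the verification $C_n(a)\oplus C_n(b)=C_n(\max\{a,b\})$ and $C_n(a)C_n(b)=C_n(a+b)$ is identical. Hence $S$ is isomorphic to a subalgebra of each algebra in question, and every identity of $\Mat_n(S_D)$ holds in $S$, giving $\Id(\mathcal C)\subseteq\Id(S)$. With both hypotheses verified, Proposition~\ref{pro:fulltransfer} gives infinite axiomatic rank and the absence of a finite basis, both absolutely and relative to the ai-semiring axioms.

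There is no genuine obstacle. The one point to check is that the countermodel $C_{1,m}$ from Theorem~\ref{thm:algebraiclocality} only needs to satisfy $\Id_r(S)$, the identities of the integer scalar algebra, and this holds regardless of the coefficient domain. So no analogue of Theorem~\ref{thm:algebraiclocality} over $\Q_{\geq0}$ or $\R_{\geq0}$ is required; the inclusion $\Id(\mathcal C)\subseteq\Id(S)$ suffices. This plan does not need the later equational equivalence of the six domains.
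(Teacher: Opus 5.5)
Your proposal is correct and follows the paper's proof: the constant-matrix copy of $S$ gives $\Id(\mathcal C)\subseteq\Id(S)$, Proposition~\ref{lem:fullpadding} supplies every $\eps_m$, and Proposition~\ref{pro:fulltransfer} finishes the argument. For the rational case you add an explicit subalgebra step, $\Mat_n(S_{\Q_{\geq0}})\subseteq\Mat_n(S_{\R_{\geq0}})$, which the paper leaves implicit.
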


\begin{proof}
Each matrix algebra contains the constant matrices with
nonnegative integer entries and thus contains a copy of
$\Smax$. Its identities are therefore contained in
$\Id(\Smax)$. Proposition~\ref{lem:fullpadding} supplies all
$\varepsilon_m$, and Proposition~\ref{pro:fulltransfer} applies.
\end{proof}

The same conclusion can be extended to positive entries by
comparing the complete equational theories. In the following
argument, multiplying all entries by $c>0$ means ordinary
arithmetic scaling.

\begin{lem}\label{src:lem:termfunctions}
Fix $n$ and a binary-signature term $t$ in $k$ variables.
For each pair $a,b$, the entry
$t(X_1,\ldots,X_k)_{ab}$ is a finite maximum of ordinary
homogeneous linear forms in the $kn^2$ input entries, with
nonnegative integer coefficients. It is a continuous function
on $\R^{kn^2}$ and satisfies
\begin{equation}\label{src:eq:homogeneity}
t(cX_1,\ldots,cX_k)=c\,t(X_1,\ldots,X_k)
\quad(c>0).
\end{equation}
\end{lem}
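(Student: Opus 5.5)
The plan is a structural induction on the term $t$, carrying the stronger invariant that each entry $t(X_1,\ldots,X_k)_{ab}$ equals $\max_{\lambda\in\Lambda_{ab}}\lambda(\mathbf X)$ for a finite nonempty set $\Lambda_{ab}$ of linear forms
\[
 \lambda(\mathbf X)=\sum_{h,u,v}c_{h,u,v}\,(X_h)_{uv},
 \qquad c_{h,u,v}\in\N .
\]
Here each form has total coefficient sum $\sum c_{h,u,v}$ equal to the length of the corresponding word in $t$, which is positive. The absence of constant symbols is what guarantees that no affine constant term ever appears, and this is the source of homogeneity.

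\emph{Base case.} For a variable $t=x_h$, the entry is the single form $(X_h)_{ab}$.

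\emph{Addition.} For $t=t_1\oplus t_2$, take $\Lambda_{ab}=\Lambda^{(1)}_{ab}\cup\Lambda^{(2)}_{ab}$, since a maximum of two finite maxima is again a finite maximum.

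\emph{Multiplication.} For $t=t_1t_2$, use the definition of the max-plus product, as in Lemma~\ref{src:lem:paths}. The entry becomes
\[
 \max_{1\le c\le n}\bigl(t_1(\mathbf X)_{ac}+t_2(\mathbf X)_{cb}\bigr).
\]
Distribute the ordinary sum over the two inner maxima using $\max_i\alpha_i+\max_j\beta_j=\max_{i,j}(\alpha_i+\beta_j)$. This gives the finite family $\{\lambda+\lambda':\lambda\in\Lambda^{(1)}_{ac},\ \lambda'\in\Lambda^{(2)}_{cb},\ c\in[n]\}$. Sums of forms with nonnegative integer coefficients are again of that kind.

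Nothing in the induction uses nonnegativity of the entries, so the description holds on all of $\R^{kn^2}$. This matches the formula for $S_D$ with any $D$, since the operations are the same $\max$ and $+$.

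\emph{Continuity.} Each linear form is continuous on $\R^{kn^2}$, and a finite maximum of continuous functions is continuous.

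\emph{Homogeneity.} Every form satisfies $\lambda(c\mathbf X)=c\,\lambda(\mathbf X)$ by linearity. For $c>0$ the map $s\mapsto cs$ is increasing, so $\max_\lambda c\,\lambda(\mathbf X)=c\max_\lambda\lambda(\mathbf X)$. Applying this entrywise gives \eqref{src:eq:homogeneity}.

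There is no real obstacle. The only point requiring care is the multiplication step, namely exchanging $+$ with the finite maxima. It should also be made explicit that the restriction $c>0$ is needed: a negative $c$ would reverse the maxima, and $c=0$ would in any case leave the domain's intended scaling trivial.
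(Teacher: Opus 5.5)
Your proposal is correct and follows essentially the same route as the paper: structural induction on $t$ through the variable, $\oplus$ and product cases, with the product step handled by exchanging $+$ with the finite maxima, followed by continuity of a finite maximum of linear forms and homogeneity from linearity plus monotonicity of $s\mapsto cs$ for $c>0$. The only blemish is notational: you use $c$ both as the intermediate index in the product and as the scaling constant, and these should be given different names.
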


\begin{proof}
An entry of a variable is itself a linear form. If the claim
holds for $s$ and $u$, an entry of $s\oplus u$ is the maximum
of the combined finite lists of forms. An entry of $su$ is
\[
\max_h\bigl(s(X_1,\ldots,X_k)_{ah}
             +u(X_1,\ldots,X_k)_{hb}\bigr).
\]
For each $h$, the sum of two finite maxima is the maximum of
all pairwise sums of their forms. These sums are again
homogeneous linear forms with nonnegative integer coefficients.
Induction on the term proves the representation. Finite
maxima of continuous linear functions are continuous. Every
such form scales by $c$, proving \eqref{src:eq:homogeneity}.
\end{proof}

\begin{pro}\label{src:prop:domains}
For each fixed $n\geq1$, the matrix ai-semirings over the six
coefficient domains
\[
\N,\quad\mathbb N_{>0},\quad
\Q_{\geq0},\quad\Q_{>0},\quad
\R_{\geq0},\quad\R_{>0}
\]
have exactly the same identities in the binary signature.
\end{pro}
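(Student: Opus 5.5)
We compare the six matrix algebras using subalgebra inclusions, and close the remaining gaps with the homogeneity and continuity of term functions from Lemma~\ref{src:lem:termfunctions}. Write $T_D=\Mat_n(S_D)$ for the six domains $D$. Each domain below is closed under maximum and addition, so $T_D$ is a subalgebra of $T_{D'}$ whenever $D\subseteq D'$. Every identity valid in $T_{D'}$ is then valid in $T_D$. From the inclusions
\[
\mathbb N_{>0}\subseteq\N\subseteq\Q_{\geq0}\subseteq\R_{\geq0},\qquad
\mathbb N_{>0}\subseteq\Q_{>0}\subseteq\R_{>0}\subseteq\R_{\geq0},
\]
we get $\Id(T_{\R_{\geq0}})\subseteq\Id(T_D)$ for every $D$, and $\Id(T_D)\subseteq\Id(T_{\mathbb N_{>0}})$ for every $D$. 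It therefore suffices to prove the single inclusion $\Id(T_{\mathbb N_{>0}})\subseteq\Id(T_{\R_{\geq0}})$.

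So let $s\approx t$ hold in $T_{\mathbb N_{>0}}$, and fix an entry $(a,b)$. By Lemma~\ref{src:lem:termfunctions}, the functions $f=s(\cdot)_{ab}$ and $g=t(\cdot)_{ab}$ are continuous, positively homogeneous maps $\R^{kn^2}\to\R$, and they agree on the lattice points with all coordinates in $\mathbb N_{>0}$.

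\textbf{From positive integers to positive rationals.} Take positive rational inputs $X_1,\ldots,X_k$. Choose a common denominator $c\in\mathbb N_{>0}$ such that every $cX_i$ has positive integer entries. Homogeneity \eqref{src:eq:homogeneity} gives
\[
f(X)=c^{-1}f(cX)=c^{-1}g(cX)=g(X).
\]

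\textbf{From positive rationals to nonnegative reals.} The set of tuples with positive rational entries is dense in $\R_{\geq0}^{kn^2}$. Since $f$ and $g$ are continuous on all of $\R^{kn^2}$, including boundary points with zero entries, the equality $f=g$ extends to the whole closed orthant. Hence $s\approx t$ holds in $T_{\R_{\geq0}}$, and all six theories coincide.

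\textbf{Main obstacle.} The only delicate point is that the arguments must be legitimate term evaluations in the binary signature, so that no constants are needed. This is guaranteed by Lemma~\ref{src:lem:termfunctions}, which expresses every term entry as a finite maximum of linear forms; no identity matrix or empty product ever appears. One should also note that the approximating sequence can be taken inside $\Q_{>0}$, so that points with zero coordinates are reached purely by continuity and not by any algebraic argument.
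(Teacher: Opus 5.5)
Your proof is correct and follows essentially the same route as the paper. Both arguments sandwich the identity sets using the domain inclusions, use homogeneity to pass from positive integers to positive rationals, and use the continuity in Lemma~\ref{src:lem:termfunctions} to reach real entries. The only difference is that you invoke density of $\Q_{>0}^{kn^2}$ in the closed orthant $\R_{\geq0}^{kn^2}$ in a single step, whereas the paper first passes to positive reals and then reaches zero entries via the perturbation $X_j+\eta J$ with $\eta\downarrow0$; both are equally valid.
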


\begin{proof}
Suppose that an identity $s\approx t$ holds for matrices with
positive integer entries. We first prove that it holds for
positive rational entries. Given any assignment of positive
rational matrices, choose a common positive integer
denominator $d$ for all entries in all input matrices.
Scaling every input by $d$ produces positive integer
matrices. The assumed identity and \eqref{src:eq:homogeneity} give
\[
d\,s(X_1,\ldots,X_k)
=s(dX_1,\ldots,dX_k)
=t(dX_1,\ldots,dX_k)
=d\,t(X_1,\ldots,X_k).
\]
Dividing ordinary matrix entries by $d$ proves the desired
equality for the rational assignment.

Next, positive rational assignments are dense in the space
of positive real assignments. For an arbitrary positive real
assignment, approximate all input entries simultaneously by
positive rationals. Lemma~\ref{src:lem:termfunctions} permits passage
to the limit in every output entry, proving $s\approx t$ for
positive real matrices.

Finally, let $X_1,\ldots,X_k$ have nonnegative real entries,
and let $J$ denote the ordinary matrix all of whose entries
are $1$. For $\eta>0$, the matrices $X_j+\eta J$, where the
addition here is ordinary entrywise addition, have positive
real entries. The identity holds on these matrices. Taking
$\eta\downarrow0$ and using continuity proves it for the
original nonnegative assignment. We have shown
\[
\Id(\Mat_n(S_{\mathbb N_{>0}}))
\subseteq\Id(\Mat_n(S_{\R_{\geq0}})).
\]

Every one of the six domains lies between $\mathbb N_{>0}$
and $\R_{\geq0}$ by inclusion. If $D_1\subseteq D_2$,
the entrywise inclusion is an embedding of
$\Mat_n(S_{D_1})$ into $\Mat_n(S_{D_2})$, and identities have
the reverse inclusion. Hence, for any of the six domains $D$,
\[
\Id(\Mat_n(S_{\R_{\geq0}}))
\subseteq\Id(\Mat_n(S_D))
\subseteq\Id(\Mat_n(S_{\mathbb N_{>0}})).
\]
The inclusion proved above makes all three sets equal,
establishing the assertion.
\end{proof}

\begin{cor}\label{src:cor:positive}
For each $n\geq1$ and each of the six coefficient domains
in Proposition~\ref{src:prop:domains}, the full matrix
ai-semiring has infinite axiomatic rank and no finite
equational basis, also relative to the ai-semiring axioms.
\end{cor}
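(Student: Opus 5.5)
The plan is to deduce the corollary from Proposition~\ref{src:prop:domains} together with Proposition~\ref{pro:fulltransfer}. Fix $n\geq1$ and one of the six domains $D$. Since infinite axiomatic rank and the (relative) nonfinite basis property depend only on the equational theory, and Proposition~\ref{src:prop:domains} shows that $\Id(\Mat_n(S_D))=\Id(\Mat_n(S))$, the shortest route is to observe that the singleton class $\{\Mat_n(S_D)\}$ has the same identities as $\{\Mat_n(S)\}$. Then Corollary~\ref{cor:fullsingle} transfers at once: for each $r$, $\Id_r(\Mat_n(S_D))=\Id_r(\Mat_n(S))$ does not imply the whole theory, and a finite basis for one theory would be a finite basis for the other.

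Alternatively, to avoid relying on the theory comparison alone, I will verify the two hypotheses of Proposition~\ref{pro:fulltransfer} directly for $\mathcal C=\{\Mat_n(S_D)\}$. The first hypothesis is $\Id(\mathcal C)\subseteq\Id(S)$. For the domains containing $\N$, this follows as in Corollary~\ref{src:cor:nonnegative}: the constant matrices $C_n(a)$ give an embedding of $S$. For the positive domains $\mathbb N_{>0}$, $\Q_{>0}$ and $\R_{>0}$, the value $0$ is missing, so constant matrices only embed $(\mathbb N_{>0},\max,+)$. Here I will instead use Proposition~\ref{src:prop:domains}, which gives $\Id(\Mat_n(S_D))=\Id(\Mat_n(S))\subseteq\Id(S)$ by Lemma~\ref{src:lem:embedding}. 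This is the only point where the positive domains need care, and it is exactly what the equivalence handles.

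The second hypothesis is that every $\eps_m$ with $m\geq3$ holds in $\Mat_n(S_D)$. Proposition~\ref{lem:fullpadding} gives this in $\Mat_n(\SR)$, and since $D\subseteq\R_{\geq0}$, entrywise inclusion embeds $\Mat_n(S_D)$ into $\Mat_n(\SR)$, so the identities pass down. Proposition~\ref{pro:fulltransfer} then yields infinite axiomatic rank and the absence of a finite basis, both absolutely and relative to the ai-semiring axioms. The relative version holds because the countermodels $C_{1,m}$ are ai-semirings.

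I do not expect a genuine obstacle, since everything is assembled from earlier results. The one subtle point is the first hypothesis for the positive domains, where $S$ does not obviously embed. This is handled by passing through Proposition~\ref{src:prop:domains} rather than through an explicit embedding.
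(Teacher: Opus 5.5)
Your proposal is correct, and your primary route is exactly the paper's. Proposition~\ref{src:prop:domains} identifies $\Id(\Mat_n(S_D))$ with $\Id(\Mat_n(S))$, and the properties in question depend only on the equational theory, so Corollary~\ref{cor:fullsingle} transfers directly. Your alternative check of the hypotheses of Proposition~\ref{pro:fulltransfer} is also valid, including your handling of the positive domains where $S$ does not embed, but the paper does not need it.
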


\begin{proof}
By Proposition~\ref{src:prop:domains}, each algebra has the same
equational theory as $\M n$. The assertions therefore follow
from Corollary~\ref{cor:fullsingle}.
\end{proof}

\begin{remark}\label{src:rem:signature}
The scaling argument concerns ordinary degree-one homogeneity
of the entry functions in all input entries. It does not
require that the words occurring in a term have equal
lengths. For instance, the scalar term $x\oplus x^2$ has
value $\max\{a,2a\}$ and still scales by $c$ when $a$ is
replaced by $ca$. The absence of named constants ensures
the inductive homogeneity statement. Results for an expanded
signature with constants would require a separate argument.
\end{remark}

\begin{remark}\label{src:rem:scope}
The padded identities are sensitive to the coefficient domain.
If negative entries are allowed, already in dimension one
the assignment $z=-1$, $x_1=x_2=0$ gives
\[
L_2=-4,\qquad R_2=-12.
\]
Thus $\varepsilon_2$ fails. Here each block $zx_jz$ has
ordinary value $-2$, while each block $z^3x_jz^3$ has value
$-6$. This also indicates exactly where the nonnegative
connecting entries in Lemma~\ref{src:lem:cubic} are needed.

Allowing $-\infty$ produces a different obstruction.
In the usual tropical matrix semiring, let
\[
I=\begin{pmatrix}0&-\infty\\-\infty&0\end{pmatrix}
\]
and assign $z=I$. Then $I^3=I$ and all padding disappears.
For the matrices $A,B$ from Example~\ref{src:ex:scalarfails},
the identity $\varepsilon_2$ becomes
$AB\leq A^2\oplus B^2$, which is false. Thus the same family
of separating identities cannot be used in that setting.
These failures delimit the cubic-padding argument; they do not by
themselves decide the finite basis problem over coefficient domains
with negative entries or $-\infty$. In the upper triangular case,
Lemma~\ref{lem:blockidentity} supplies the different identities
$\Theta_{n,m}$, and Theorem~\ref{thm:triangularrank} supplies the
separate nonfinite basis proof. The explicit triangular failure of
$\eps_2$ is given in Remark~\ref{rem:paddingfails}.
\end{remark}

\subsection{Upper triangular and more general coefficient domains}

The main results concern $\overline S$, whose finite entries are
nonnegative integers. The same constructions yield the following
extension. It does not identify the equational theory of $\overline S$
with that of the real tropical semiring.

\begin{cor}\label{cor:otherdomains}
Let $D\subseteq\R$ be closed under ordinary addition and maximum and
contain $\N$. Set $D_{\max}=(D,\max,+)$ and
$\overline D_{\max}=D_{\max}\cup\{-\infty\}$.
For all $1\leq n<m$,
\[
 \bigl|[\V(\UT_n(\overline D_{\max})),
          \V(\UT_m(\overline D_{\max}))]\bigr|=2^{\aleph_0},
\]
and every variety in this interval has infinite axiomatic rank.
In particular, the conclusion applies to $D=\Z,\Q,\R$, as well as
to the nonnegative integer, rational, and real domains.

For full matrices, the same continuum conclusion holds for the interval
\[
 [\V(\Mat_n(D_{\max})),\V(\Mat_m(D_{\max}))].
\]
When $D$ contains negative numbers, no full-matrix nonfinite-basis
claim is inferred here from the nonnegative-padding argument.
\end{cor}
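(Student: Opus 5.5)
The plan is to rerun the arguments of Sections~\ref{sec:matrix}--\ref{sec:triangularnfb} with $S$ replaced by $D_{\max}$ and $\overline S$ replaced by $\overline D_{\max}$. The point to verify is that every ingredient either holds over an arbitrary commutative ai-semiring, or uses only matrices with entries in $\{0,1\}\subseteq\N\subseteq D$ (together with $-\infty$ in the triangular case). Put $\Uvar^D_n=\V(\UT_n(\overline D_{\max}))$ and $\Fvar^D_n=\V(\Mat_n(D_{\max}))$.

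\emph{The chains are ascending.} Lemma~\ref{lem:embedding} holds for any ai-semiring, and the zero-padding embedding \eqref{eq:utembedding} only uses that $-\infty$ is a zero. Hence $\Fvar^D_n\subseteq\Fvar^D_m$ and $\Uvar^D_n\subseteq\Uvar^D_m$ for $n\le m$.

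\emph{Membership of the countermodels.} Since $D_{\max}$ and $\overline D_{\max}$ are commutative ai-semirings, Lemma~\ref{lem:AL} gives $E_{2n}\approx O_{2n}$ in $\Fvar^D_n$ and $\Uvar^D_n$. The derivation in Lemma~\ref{lem:lower} is purely syntactic, so both varieties satisfy every $\sigma_{n,p}$. Next I would reread the proof of Theorem~\ref{thm:recognition}. The counting, length and loop--advance assignments in its three cases have all finite entries in $\{0,1\}$, so they are legitimate matrices over $D_{\max}$ and over $\overline D_{\max}$. Path values of words are computed by the same max-plus formula. This is the case because every entry entering these computations is $0$, $1$ or $-\infty$: the evaluation depends only on the assigned matrices, not on the ambient domain. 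So every strict separation at the designated coordinate persists, and $\ker\Psi\subseteq{\equiv_L}$ as before. Therefore $\Alg_{n,p}\in\Fvar^D_{n+1}\cap\Uvar^D_{n+1}$.

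\emph{Continuum intervals.} Theorem~\ref{thm:independent} concerns only the finite algebras $\Alg_{n,q}$, so \eqref{eq:jointruth} holds verbatim for the joins $\Fvar^D_n\vee\bigvee_{p\in J}\V(\Alg_{n,p})$ and its triangular analogue. The proofs of Theorem~\ref{thm:intervals} and Corollary~\ref{cor:cardinality} then give the order embeddings of $\Pal(\PP)$ and the lower bound $2^{\aleph_0}$. The upper bound again comes from counting equational theories. Taking $m>n+1$ only enlarges the interval, so the continuum conclusion holds for all $n<m$, for both full and triangular matrices. As a side remark, strictness also follows from Lemma~\ref{lem:strict}, whose witnesses have entries in $\{0,1,-\infty\}$, but the continuum result already implies it.

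\emph{Infinite axiomatic rank for the triangular intervals.} First, $S$ embeds into $\UT_n(\overline D_{\max})$ via $a\mapsto aI_n$, since $\N\subseteq D$. Hence $\V(S)\subseteq\Uvar^D_n$, and every $\mathcal X$ in the interval satisfies $\Id_r(\mathcal X)\subseteq\Id_r(S)$. Second, Lemma~\ref{lem:blockidentity} was proved for arbitrary real finite entries with $-\infty$ as zero; since $D\subseteq\R$, the identity $\Theta_{n,m}$ holds in $\UT_k(\overline D_{\max})$ for every $k$, and in particular throughout $\Uvar^D_m\supseteq\mathcal X$. The proof of Theorem~\ref{thm:triangularrank} then goes through unchanged. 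It uses $C_{n,m}$ from Theorem~\ref{thm:algebraiclocality}, with $m=\max\{3,r+1\}$, which satisfies $\Id_r(\mathcal X)$ but fails $\Theta_{n,m}$.

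\emph{Main obstacle.} I expect the only real risk to be this transfer of the recognition theorem when $D$ contains negative numbers. One must confirm that no step there appealed to nonnegativity of arbitrary entries, as opposed to those of the specific assignments. The cubic-padding Lemma~\ref{src:lem:cubic} is exactly such a step, and it is not used here. This is why the corollary makes no full-matrix rank claim.
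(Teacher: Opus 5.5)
\textbf{Continuum part.} This part is correct and matches the paper. Rerunning Theorem~\ref{thm:recognition} with the $\{0,1,-\infty\}$-valued assignments amounts to the paper's observation that $\Alg_{n,p}\in\V(\UT_{n+1}(\overline S))\subseteq\V(\UT_{n+1}(\overline D_{\max}))$, and likewise for full matrices. After that, Lemma~\ref{lem:AL}, \eqref{eq:jointruth} and the counting bound transfer verbatim.

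\textbf{Infinite-rank part: a false step.} You assert that $\Theta_{n,m}$, with $n$ the \emph{lower} dimension of the interval, holds in $\UT_k(\overline D_{\max})$ for every $k$, and hence throughout $\Uvar^D_m\supseteq\mathcal X$. The loop-block identity is not dimension independent. The proof of Lemma~\ref{lem:blockidentity} needs at least as many blocks as the dimension, so that the at most $k-1$ strict advances leave one block loop-only.

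Write $m'=\max\{3,r+1\}$ for the number of variables. Then $\Theta_{n,m'}$ already fails in $\UT_{n+1}(\overline S)$:
\begin{enumerate}
\item give $x_1$ weight $0$ on the superdiagonal positions $(a,a+1)$ and $-\infty$ elsewhere;
\item give each $x_j$ with $j\geq2$ weight $j$ on the diagonal and $-\infty$ elsewhere.
\end{enumerate}
At $(1,n+1)$ a word has finite value only if it contains exactly $n$ letters $x_1$. So the right-hand words with $i\in\{1,m'\}$ have value $-\infty$. Each remaining word, with $2\leq i\leq m'-1$, falls short of $p_{m'}^n$ by exactly $(i+1)-i=1$.

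Consequently, for every $\mathcal X$ in the interval containing $\UT_{n+1}(\overline S)$ --- in particular $\mathcal X=\Uvar^D_m$ --- your identity is not valid in $\mathcal X$. The failure of $\Theta_{n,m'}$ in $C_{n,m'}$ then proves nothing about $\mathcal X$.

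\textbf{Repair.} This is what the paper does (Corollary~\ref{cor:UTallNFB}: apply Theorem~\ref{thm:triangularrank} with the \emph{upper} endpoint). Take the block count equal to the upper dimension $m$. By the real-weight version of Lemma~\ref{lem:blockidentity}, $\UT_m(\overline D_{\max})\models\Theta_{m,m'}$, so every $\mathcal X\subseteq\Uvar^D_m$ satisfies it. The countermodel is $C_{m,m'}$. It satisfies $\Id_{m'-1}(S)\supseteq\Id_r(S)\supseteq\Id_r(\mathcal X)$, and it fails $\Theta_{m,m'}$ under $x_j\mapsto[\{X_j\}]$. With this correction, and with $m$ no longer used both for the upper dimension and for $\max\{3,r+1\}$, your argument coincides with the paper's.
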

\begin{proof}
The finite algebras $\Alg_{n,p}$ already belong to
$\V(\UT_{n+1}(\overline S))$, which is contained in the corresponding
variety over $\overline D_{\max}$. The lower endpoint still satisfies
\eqref{eq:AL}, since the coefficient multiplication is commutative.
Thus the identical joins and identities from
Theorem~\ref{thm:intervals} give the continuum lower bound; the upper
bound is unchanged. Strictness is witnessed by the same weight-$0/1$
matrices.

The scalar algebra $S$ embeds in each triangular algebra over
$\overline D_{\max}$, and Lemma~\ref{lem:blockidentity} remains valid
for real finite weights. Therefore the proof of
Theorem~\ref{thm:triangularrank} applies without change.
For full matrices use
$\Alg_{n,p}\in\V(\Mat_{n+1}(S))\subseteq\V(\Mat_{n+1}(D_{\max}))$
and Lemma~\ref{lem:AL} to obtain the same order embedding. The last
sentence records the nonnegativity hypothesis required in
Proposition~\ref{lem:fullpadding}.
\end{proof}

\bibliographystyle{amsplain}

\end{document}